\numberwithin{equation}{section}  
\newcolumntype{H}{>{\setbox0=\hbox\bgroup}c<{\egroup}@{}}
\DeclareMathAlphabet{\mathpzc}{OT1}{pzc}{m}{it}
\DeclareMathAlphabet{\mathcalligra}{T1}{calligra}{m}{n}
\DeclareMathAlphabet{\mathpzc}{OT1}{pzc}{m}{it}
\DeclareMathAlphabet{\mathcalligra}{T1}{calligra}{m}{n}
\DeclareMathAlphabet{\mathpzc}{OT1}{pzc}{m}{it}
\DeclareMathAlphabet{\mathcalligra}{T1}{calligra}{m}{n}
\begin{document}
	\newtheorem{theorem}{\bf Theorem}[section]
	\newtheorem{proposition}[theorem]{\bf Proposition}
	\newtheorem{definition}{\bf Definition}[section]
	\newtheorem{corollary}[theorem]{\bf Corollary}
	\newtheorem{exam}[theorem]{\bf Example}
	\newtheorem{remark}[theorem]{\bf Remark}
	\newtheorem{lemma}[theorem]{\bf Lemma}
	\newtheorem{assum}[theorem]{\bf Assumption}
	\newcommand{\von}{\vskip 1ex}
	\newcommand{\vone}{\vskip 2ex}
	\newcommand{\vtwo}{\vskip 4ex}
	\newcommand{\ds}{\displaystyle}
	\def \noin{\noindent}
	\newcommand{\be}{\begin{equation}}
		\newcommand{\ee}{\end{equation}}
	\newcommand{\beno}{\begin{equation*}}
		\newcommand{\eeno}{\end{equation*}}
	\newcommand{\ba}{\begin{align}}
		\newcommand{\ea}{\end{align}}
	\newcommand{\bano}{\begin{align*}}
		\newcommand{\eano}{\end{align*}}
	\newcommand{\bea}{\begin{eqnarray}}
		\newcommand{\eea}{\end{eqnarray}}
	\newcommand{\beano}{\begin{eqnarray*}}
		\newcommand{\eeano}{\end{eqnarray*}}
	\def \noin{\noindent}
	\def\arraystretch{1.3}
	\def \tcK{{\tilde {\mathcal K}}}    
	\def \O{{\Omega}}
	\def \cT{{\mathcal T}}
	\def \cV{{\mathcal V}}
	\def \cE{{\mathcal E}}
	\def \R{{\mathbb R}}
	\def \V{{\mathbb V}}
	\def \S{{\mathbb S}}
	\def \N{{\mathbb N}}
	\def \Z{{\mathbb Z}}
	\def \Mc{{\mathcal M}}
	\def \Cc{{\mathcal C}}
	\def \Rc{{\mathcal R}}
	\def \Ec{{\mathcal E}}
	\def \Gc{{\mathcal G}}
	\def \Tc{{\mathcal T}}
	\def \Qc{{\mathcal Q}}
	\def \Ic{{\mathcal I}}
	\def \Pc{{\mathcal P}}
	\def \Oc{{\mathcal O}}
	\def \Uc{{\mathcal U}}
	\def \Yc{{\mathcal Y}}
	\def \Ac{{\mathcal A}}
	\def \Bc{{\mathcal B}}
	\def \k{\mathpzc{k}}
	\def \Rp{\mathpzc{R}}
	\def \Os{\mathscr{O}}
	\def \Js{\mathscr{J}}
	\def \Es{\mathscr{E}}
	\def \Qs{\mathscr{Q}}
	\def \Ss{\mathscr{S}}
	\def \Cs{\mathscr{C}}
	\def \Ds{\mathscr{D}}
	\def \Ms{\mathscr{M}}
	\def \Ts{\mathscr{T}}
	\def \LL{L^{\infty}(L^{2}(\Omega))}
	\def \LH{L^{2}(0,T;H^{1}(\Omega))}
	\def \B {\matheorem{BDF}}
	\def \el {\matheorem{el}}
	\def \re {\matheorem{re}}
	\def \e {\matheorem{e}}
	\def \div {\matheorem{div}}
	\def \CN {\matheorem{CN}}
	\def \Rs   {\mathbf{R}_{{\matheorem es}}}
	\def \Rb {\mathbf{R}}
	\def \Jb {\mathbf{J}}
	\def  \apos {\emph{a posteriori~}}
	
	\def\mean#1{\left\{\hskip -5pt\left\{#1\right\}\hskip -5pt\right\}}
	\def\jump#1{\left[\hskip -3.5pt\left[#1\right]\hskip -3.5pt\right]}
	\def\smean#1{\{\hskip -3pt\{#1\}\hskip -3pt\}}
	\def\sjump#1{[\hskip -1.5pt[#1]\hskip -1.5pt]}
	\def\jumptwo{\jump{\frac{\p^2 u_h}{\p n^2}}}
\title{Adaptive SIPG method for approximations of parabolic boundary control problems with bilateral box constraints on Neumann boundary}
\author{Ram Manohar\thanks{Department of Mathematics \& Statistics, Indian Institute of Technology Kanpur, Kanpur - 208016, India \tt{(rmanohar267@gmail.com)}.}, ~~B. V. Rathish Kumar\thanks{Department of Mathematics \& Statistics, Indian Institute of Technology Kanpur, Kanpur - 208016, India   \tt{(drbvrk11@gmail.com)}.}, ~~Kedarnath Buda\thanks{Department of Mathematics \& Statistics, Indian Institute of Technology Kanpur, Kanpur - 208016, India   \tt{Kedarnath.buda@gmail.com}.},
	~~ and ~~Rajen Kumar Sinha\thanks{Department of Mathematics, Indian Institute of Technology Guwahati, Guwahati - 781039, India \tt{(rajen@iitg.ac.in)}.}}

\date{}

\maketitle
\textbf{ Abstract.}{\small{ This study presents an a posteriori error analysis of adaptive finite element approximations of parabolic boundary control problems with bilateral box constraints that act on a Neumann boundary.  The control problem is discretized using the symmetric interior penalty Galerkin (SIPG) technique. We derive both reliable and efficient type residual-based error estimators coupling with the data oscillations.  The implementation of these error estimators  serves as a guide for the adaptive mesh refinement process, indicating whether or not more refinement is required. Although the control error estimator accurately captured control approximation errors, it had limitations in terms of guiding refinement localization in critical circumstances. To overcome this, an alternative control indicator was used in numerical tests. The results demonstrated the clear superiority of adaptive refinements over uniform refinements, confirming the proposed approach's effectiveness in achieving accurate solutions while optimizing computational efficiency.  numerical experiment showcases the effectiveness of the derived error estimators.}}  \\ 
		
\textbf{Key words.} A posteriori error analysis, parabolic boundary control problems, adaptive finite element methods, symmetric interior penalty Galerkin technique. 
		
\vspace{.1in}
\textbf{AMS subject classifications.} $65{N}30$, $65{N}50$, $49J20$, $65K10$.
		
\maketitle 

\section{Introduction}
Let $\Gamma:=\partial \Omega$ be the boundary  of an open bounded polygonal domain $\Omega \subset \mathbb{R}^{2}$ such that $\Gamma=\overline{\Gamma_{D}} \cup \overline{\Gamma_{N}}$ with $\Gamma_{D} \cap \Gamma_{N}=\emptyset$. For adaptive finite element approximation,  we take into the account of following control-constrained problem:
\begin{equation}
\underset{q \in \mathpzc{Q}_{Nad}}{\min} \int_0^T\Big\{\frac{1}{2}\int_{\O}(y-y_{d})^2\, dx+\frac{\alpha}{2} \int_{\Gamma_{N}}(q-q_{d})^2\,ds+\int_{\Gamma_{N}} r_{_N}\,y\, ds\Big\}dt \label{contfunc}
\end{equation}
subject to the conditions
\begin{subequations}
	\begin{eqnarray}
		\frac{\partial y}{\partial t}- \Delta y+a_0 y &=&f \quad \;\;\;\;\;\;\;\;\;\text {in } \Omega\times (0,T], \label{contstate}\\
		y &=&g_{_D} \quad \;\;\;\;\;\; \text { on } \Gamma_{_D}\times (0,T], \label{diricon}\\
		\frac{\partial y}{\partial n} &=&q+g_{_N} \quad  \text { on } \Gamma_{_N}\times (0,T], \label{neumanncon}\\
		y(x,0) &=& y_0(x) \quad \;\;\; \text{ in } \Omega, \label{incon}
	\end{eqnarray}
\end{subequations}
where $\mathpzc{Q}_{Nad}$ is a closed convex set with control constraints defined by
\begin{equation}
\mathpzc{Q}_{Nad}:=\big\{q \in L^2(I;L^{2}(\Gamma_{_N})):~~ q_{a} \leq q(x,t) \leq q_{b} \;\; \text{ a.e. }\; (x,t) \in \Gamma_{_N} \times I \big\}. \label{contspace}
\end{equation}
Here $I=[0,T]$, $q_{a},\, q_{b} \in L^{\infty}(I;L^{\infty}\left(\Gamma_{_N}\right))$ with $q_{a} \leq q_{b}$ for almost all $(x,t) \in \Gamma_{_N} \times I$. We consider a regularization parameter ($\alpha$) as a positive constant. The desired control function $q_d$ serves as a reference for the control (see, \cite{hoppeiliash2008}). Keep in mind that the aforementioned method also works for the most prevalent and specific situation, $q_{d}=0$, this means that there is no prior knowledge about the ideal control. Furthermore, the coefficient $r_{_N}$ in \eqref{contfunc} is added explicitly to the cost functional which ensures that the boundary aspect of any given adjoint-state function are achieved (see, Eq. \eqref{2.17nuemanncod-adj}). With similar cost functionals, we are delighted to recommend \cite{casasdhamo2012,hinzematthes2009,kohlssiebert2014}, and indeed the citations thereof. We choose the given functions in the following spaces to illustrate the well-posedness  (cf., \cite{lions1971}, \cite{LM72}) of the minimization problem\eqref{contfunc}--\eqref{contspace}:
\begin{subequations}\label{assump1}
\begin{align}
f,\,& y_{d} \in L^2(I;L^{2}(\Omega)),\, q_{d},\, g_{_N},\, r_{_N} \in L^2(I;L^{2}\left(\Gamma_{_N}\right)),\\ 
\text{and} &~ y_0\in H^2(\Omega),\, g_{_D} \in L^2(I;H^{1/2}\left(\Gamma_{_D}\right)), \;a_0 \in L^{\infty}(\Omega), \end{align}
\end{subequations}
then there exists a positive constant $c$ such that 
\begin{equation}
a_0 \geq c > 0 \quad \text{a.e. in } \Omega. \label{assum2}
\end{equation}
Optimization problems governed by partial differential equations (PDEs) arise in various real-life scenarios, including the shape optimization of technological devices \cite{pironneau2001}, parameter identification in environmental processes, and addressing control flow problems \cite{fernando2001, skiba2006}. These problems are inherently complex, demanding meticulous attention to secure efficient numerical approximations. Among all the methods, the adaptive finite element method (AFEM) stands out as particularly effective. The AFEM operates through the successive iterations of a sequence comprising the following steps: 
$$\texttt{ Solve} \rightarrow \texttt{ Estimate } \rightarrow \texttt{ Mark } \rightarrow \texttt{ Refine}.$$
In the \texttt {Solve} stage, the optimization problem is numerically tackled within a finite-dimensional space defined by a given mesh. But the crucial component for the procedure can be determined in the \texttt{Estimate} stage. Here, local error indicators are computed based on discrete solutions without knowing for the exact solution. These indications are essential in designing mesh adaption algorithms that optimize computations and convey computational cost proportionately. Subsequently, the \texttt{Marking} step utilizes the information gleaned from these indicators to select a subset of elements for refinement. Afterwards, the \texttt{Refine} step executes the refinement process within the adaptive loop.

Although the AFEM, which made significant contributions to the pioneering work of Babuška and Rheinboldt \cite{rheinboldt1978}, has long been recognized as a popular approach for efficiently solving initial and boundary value problems governed by PDEs. It's application to constrained optimal control problems (OCPs) has only recently gained popularity due to the contribution of Liu and Yan \cite{liuyan2001} and Becker, Kapp, and Rannacher \cite{becker2000}. In their seminal work \cite{liuyan2001}, Liu and Yan proposed a residual-type a posteriori error estimator tailored for OCPs. Concurrently, Becker, Kapp, and Rannacher \cite{becker2000} introduced a dual-weighted goal-oriented adaptivity strategy. For a comprehensive understanding of these methodologies, we refer to \cite{hoppeiliash2008, zhou2009, hoppeiliash06,  manohar2024error, rmrk2021, manohar2022local,   yanzhou2009,  arasozen2015} 
for residual-type estimators, and to \cite{benedix2009,  mhoppe2010} for insights into the dual-weighted goal-oriented approach, along with references therein for further exploration of recent advancements. Moreover, ensuring the theoretical success of a posteriori error estimators has prompted endeavors such as those undertaken in \cite{gaevskaya2007, siebertrosch2014} to establish the convergence of the AFEM for OCPs. These theoretical analyses serve to underpin the practical applicability and reliability of the AFEM in this domain.

Adaptive mesh refinement presents an attractive avenue for tackling OCPs, especially those characterized by layers or singularities within certain regions of the mesh. This adaptivity enables local refinement around these layers as necessary, effectively achieving the desired residual bound with minimal degrees of freedom. The a posteriori error analysis of OCPs is covered in a large amount of literature, although distributed OCPs are the main emphasis. It is a domain that is well explored in works like \cite{becker2000,mhoppe2010,hoppeiliash2008, zhou2009,  Pratibha2024,shakya17, shakya19, yanzhou2009,arasozen2015}. 
However, numerical solutions for boundary OCPs have not received adequate consideration. Existing studies primarily investigate residual type error estimators \cite{hoppeiliash06,kohlssiebert2014,liuyan09,liuyan2001}, with some delving into hierarchical type estimators \cite{kohlssiebert2014}, all primarily utilizing continuous finite element discretizations, except for \cite{kohlssiebert2014}.
In the latter, discontinuous finite elements for control discretization are used for the first time by Kohls, Rösch, and Siebert. In particular, research by Leykekhman \cite{leykekhman2012} suggests that discontinuous Galerkin methods offer superior convergence behavior for OCPs featuring boundary layers. Optimal convergence orders are achievable when errors are computed away from the boundary or interior layers. Moreover, employing discontinuous finite elements for control discretization on the Neumann boundary facilitates more efficient projection operator computation as elucidated in \cite[Sec. 4.1]{kohlssiebert2014}.

Discontinuous Galerkin methods possess several inherent advantages over alternative finite element methods. 
They have the ability to handle curved boundary and inhomogeneous boundary conditions with convenience, create state and test spaces with simplicity, manage nonmatching grids with simplicity, and develop $hp$-adaptive grid improvements. Although these methods have long been used, interest in them has increased recently due to the availability of low-cost processors. For further insights into discontinuous Galerkin methods, the reader may refer to seminal works such as \cite{arnoldbrezzi2002,warburton2008,kanschat2009,kanschat2007,pascal2007,bevere2014}. Despite their applicability, discontinuous Galerkin methods have mostly been explored in the context of distributed OCPs \cite{leykekhman2012,arasozen2015}. 
To the best of our knowledge, their application to boundary OCPs has not yet been studied in any existing study, indicating a promising topic for future research and development.

The SIPG approach has various potential for optimal control problems. It handles complicated geometries and unstructured or locally refined meshes with increased flexibility, as well as allowing for local mesh adaptivity, local mass conservation, and robustness in handling rough coefficients.  Its locally conservative characteristics are critical in situations regulated by conservation laws. 
The SIPG method also makes it easier to generate high-order accurate schemes, which can increase the accuracy of state and adjoint variables in optimal control formulations. This approach is  suitable for problems involving discontinuities, state or control problems, and also interface conditions.  Furthermore, the symmetric interior penalty formulation provides strong stability, especially in diffusion-dominated problems. Nevertheless, the element-wise nature of DG techniques allows them to be highly parallelizable, which is useful for tackling large-scale control problems effectively.
These advantages make SIPG method a popular choice for solving optimal control problems, particularly those involving fluid dynamics, diffusion processes, or non-smooth data. By performing element-wise computations, the SIPG method enables efficient implementation, making it suitable for various type of problems. 
By taking these factors into account, we study  the error analysis for the parabolic boundary optimal control problems.

In this study, we focus on obtaining reliable and efficient a posteriori error estimators for mixed boundary control problems (BCPs) governed by a parabolic PDE with bilateral constraints on the control variable. We followed the first-discretize-then-optimize strategy. The discretization of the problem is followed by the use of the SIPG method. We provide the optimality conditions, articulating them in terms of the state, adjoint-state, control, and cocontrol variables, with reference to the Lagrangian multiplier associated with the controls. Further, we delineates the SIPG discretization methodology applied to the BCPs. We expound upon the fully-discretization process, elucidating the numerical techniques employed to address the problem effectively. 
Specifically, we used a residual-type error estimator to evaluate global discretization errors across all variables, encompassing edge and element residuals. In addition, we incorporate considerations of data oscillations into the error analysis. Moreover, we derive local upper and lower a posteriori error estimates for the BCPs.
Our a posteriori error analysis of the boundary control problems encompasses a comprehensive assessment of errors in the state, adjoint state, control, and cocontrol variables. Further, we demonstrate the accuracy and efficiency of our method considering two examples. First example shows that how well our derived control estimators capture the active-inactive sets and the mesh reflection near the prescribed controls on the boundary. 
However, the second example addresses the time-dependent singularity behavior, which emphasizes how much effectivily error estimators capture the trajectory of the singularity at the different time steps.
It is worth noting that while considerations of data oscillations have been addressed in previous works such as \cite{aniswarth2007,mnsiebert} for single-state equations and in \cite{hoppeiliash06} for OCPs, our paper extends this analysis to the context of boundary OCPs governed by parabolic PDEs.

The subsequent sections of this paper are arranged as follows. Section \ref{section222} presents some basic notation and weak formulation of the control problems \eqref{contfunc}-\eqref{incon}. Section \ref{section333} is dedicated to the space-time discretization of the control problems. Section \ref{sec4errest} provides a comprehensive discussion on the a posteriori error estimates derived in our analysis. Section \ref{section5555} presents numerical findings that illustrate the performance and efficacy of our proposed adaptive mesh refinement strategy. A concluding remark is presented at the last.
\section{Weak representation of the model problem} \label{section222}
We adopt the standard notation from Lebesgue and Sobolev space theory, as presented in \cite{adams1975}. For $ m \in \mathbb{N}$, we denote the inner product, seminorm and norm on $H^{m}(\Omega)$ by $(\cdot, \cdot)_{m, \Omega}$, $|\cdot|_{m, \Omega}\; \text{and}\; \|\cdot\|_{m, \Omega}$, respectively. Further, let
\begin{align}
(u,v)_{L^2(\Omega)}:=\int_{\Omega}u\,v\,dx,\quad \forall u,\,v \in L^2(\Omega),\nonumber\\
(\phi,\psi)_{L^2(\Gamma)}:=\int_{\Omega}\phi\,\psi\,dx,\quad \forall \phi\,\psi \in L^2(\Gamma).\nonumber
\end{align}
For simplicity, we use $(\cdot,\cdot)_{L^2(\Omega)}=(\cdot,\cdot)$.
At this point, we introduce the state, adjoint, and test function spaces as $\mathpzc{Y}:= L^2(I; \mathpzc{W})\cap H^1(I;\mathpzc{V}^*)$, $\mathscr{Z}:= L^2(I; \mathpzc{V})\cap H^1(I;\mathpzc{V}^*)$ and $ \mathpzc{V}$, respectively,
where 
$$
\mathpzc{W}:=\big\{ w \in  H^{1}(\Omega):\;\; w|_{\Gamma_{_D}}=g_{_D}\big\}, \quad 
\text{and}  \quad \mathpzc{V}:=\big\{ v \in H^{1}(\Omega):\;\;v|_{\Gamma_{_D}}=0\big\}.
$$
Note that the nonhomogeneous Dirichlet boundary state solution space $\mathpzc{Y}$ can be modeled over the Hilbert space $\mathscr{Z}$ such that  $\mathpzc{Y}:=\big\{y \in L^2(I; H^1(\Omega))|~~ y=w+\tilde{u},\, \tilde{u}\in \mathscr{Z}\big\}$. This can be rewritten as $\mathpzc{Y}:=w+\mathscr{Z}$, which is a affine space of a Hilbert space. Further, $\mathpzc{Y}$ inherits a natural metric and topological structure from $\mathscr{Z}$ via translation $w$ (known as a lifting function, i.e., $\gamma_0(w) = g_D \;\text{on } \Gamma_D \times (0,T)$).

Let us define the bilinear form  $a(\cdot,\cdot):~\mathpzc{W}\times \mathpzc{V} \mapsto \mathbb{R}$ given by, for $w\in \mathpzc{W}$,
$$
a(w, v)=\int_{\Omega}(\nabla w \cdot \nabla v+a_0 w v)\, dx, \;\;\forall \; v \in \mathpzc{V}.
$$
This leads to the following weak formulation of the problem \eqref{contfunc}--\eqref{incon}, reads as: To find $(y, q) \in \mathpzc{Y}\times \mathpzc{Q}_{Nad}$ such that
\begin{subequations}
\begin{align}
\underset{q \in \mathpzc{Q}_{Nad}}{\min} \int_0^T\Big\{\frac{1}{2}\int_{\O}&(y-y_{d})^2\, dx+\frac{\alpha}{2} \int_{\Gamma_{N}}(q-q_{d})^2\,ds+\int_{\Gamma_{N}} r_{_N}\,y\, ds\Big\}dt, \label{weakcontfunc} \\
\text{subject  to} \hspace{2cm} & \nonumber\\
\big(\frac{\partial y}{\partial t}, \phi \big)+a(y, \phi)&=(f, \phi)+\left(q+g_{N}, v\right)_{L^2(\Gamma_{_N})}, \quad t\in (0,T],~~ \forall \phi \in\mathpzc{V},\\
y(x,0)&=y_0(x). \label{initialcnd}
\end{align}
\end{subequations}
 It is well known \cite{lions1971, LM72} that the BCP \eqref{weakcontfunc}--\eqref{initialcnd} with the assumption \eqref{assump1}--\eqref{assum2} has a unique solution $(y, q) \in \mathpzc{Y} \times \mathpzc{Q}_{Nad}$ iff there is an adjoint-state variable $z \in \mathscr{Z}$, for $t\in [0,T]$, satisfying
 \begin{subequations}
\begin{eqnarray}
\big(\frac{\partial y}{\partial t}, \phi \big)+a(y, \phi)&=&(f, \phi)+\left(q+g_{_N}, \phi \right)_{L^2(\Gamma_{_N})}  \quad \forall \phi \in \mathpzc{V},\label{weakformstate} \\
y(x,0)&=&y_0(x),\\
-\big(\frac{\partial z}{\partial t}, \psi \big)+a(\psi, z)&=&\left(y-y_{d}, \psi\right)+\left(r_{_N}, \psi \right)_{L^2(\Gamma_{_N})} \quad \forall \psi \in \mathpzc{V}, \label{weakformadjoint-state}\\
z(x,T)&=&0,\\
\left(\alpha \left(q-q_{d}\right)+z, \varphi-q\right)_{L^2(\Gamma_{_N})} &\geq& 0 \quad \forall \varphi \in \mathpzc{Q}_{Nad}, \label{weakformcontrol}
\end{eqnarray}
\end{subequations}
where the adjoint-state variable $z$ is determined by the following system
\begin{subequations}
\begin{eqnarray}
-\frac{\partial z}{\partial t}-\Delta z+a_0 z &=&y-y_{d} \quad \text { in } \Omega \times [0,T), \label{2.14adjoint-state}\\
z(\cdot, T) &=&0 \quad \text { in } \Omega,\\
z &=&0 \quad \text { on } \Gamma_{_D}\times [0,T), \\
\frac{\partial z}{\partial n} &=&r_{_N} \quad \text { on } \Gamma_{_N}\times [0,T).\label{2.17nuemanncod-adj}
\end{eqnarray}
\end{subequations}
Introducing a co-control $\mu \in L^2(I; L^{2}\left(\Gamma_{_N}\right))$ associated with control constraints, for $\gamma>0$, the optimality system \eqref{weakformstate}--\eqref{weakformcontrol} is given by
 \begin{subequations}
\begin{align}
\big(\frac{\partial y}{\partial t}, \phi \big)+a(y, \phi)&=~(f, \phi)+\left(q+g_{_N}, \phi \right)_{L^2(\Gamma_{_N})}  \quad \forall \phi \in \mathpzc{V},\label{weakstate1} \\
y(x,0)&=~y_0(x),\\
-\big(\frac{\partial z}{\partial t}, \psi \big)+a(\psi, z)&=~\left(y-y_{d}, \psi\right)+\left(r_{_N}, \psi \right)_{L^2(\Gamma_{_N})} \quad \forall \psi \in \mathpzc{V}, \label{weakadjoint-state1}\\
z(x,T)&=~0,\\
 \mu+\alpha\left(q-q_{d}\right)+z&=~0 \quad \text { a.e. in }\; (x,t)\in \Gamma_{_N}\times I,\label{optmallitycon1} \\
\mu-\max \left\{0, \mu+\gamma\left(q-q_{b}\right)\right\}&+\min \left\{0, \mu+\gamma\left(q-q_{a}\right)\right\}~=~0 \quad \text { a.e. in }\; (x,t)\in \Gamma_{_N}\times I.\label{projminmax1}
\end{align}
\end{subequations}
It is  found  that \eqref{weakstate1}--\eqref{projminmax1} possesses Newton differentiability  at least for the $\gamma=\alpha$ (cf., \cite{itokunisch2002}). 
An equivalent representation of \eqref{projminmax1}--\eqref{projminmax1} analogous to the pointwise complementarity system with $\mu(x,t)=\mu_{b}(x,t)-\mu_{a}(x,t)$, for $(x,t)\in \Gamma_N\times I$, can be written as:
\begin{subequations}
\begin{align}
 & \mu_{a} \geq 0, \quad q_{a}-q \geq 0, \quad \mu_{a}\left(q_a-q\right)=0, \; (x,t)\in \Gamma_N\times I, \label{compcond2}\\
 & \mu_{b}\geq 0, ~ \quad q-q_{b} \geq 0, \quad \mu_{b}\left(q-q_b\right)=0, \; (x,t)\in \Gamma_N\times I.\label{compcond1} 
\end{align}
\end{subequations}
To tackle the nonsmoothness in \eqref{projminmax1}, we employ a semismooth Newton iteration, which can be effectively combined with an active set strategy. This approach enables us to identify the active sets at each Newton iteration step by
\begin{subequations}
\begin{align}
&\mathpzc{A}_{a}=\left\{(x,t) \in \Gamma_{_N}\times I:~ \mu +\gamma\left(q-q_{a} \right)<0\right\}, \label{activeseta}\\
&\mathpzc{A}_{b}=\left\{(x,t) \in \Gamma_{_N}\times I:~ \mu+\gamma\left(q-q_{b} \right)>0\right\}, \label{activesetb}
\end{align}
\end{subequations}
and inactive set is given by $\mathcal{I}=\Gamma_{_N}\times I \backslash\left\{\mathpzc{A}_{a} \cup \mathpzc{A}_{b}\right\}$. The complementarity conditions in \eqref{compcond1}-- \eqref{compcond2} can be equivalently expressed as
\begin{subequations}\label{2.28contcompcond}
\begin{align}
&q=q_{a}, \quad \mu_{b}=0, \quad \mu \leq 0 \quad \text { a.e. on }\; \mathpzc{A}_{a}, \label{compcon11}\\
&q=q_{b}, \quad \mu_{a}=0, \quad \mu \geq 0 \quad \text { a.e. on }\; \mathpzc{A}_{b}, \label{compcon12}\\
q_{a}< & q <q_{b}, \quad \mu_{a}=\mu_{b}=0, \quad \mu=0 \quad \text { a.e. on }\; \mathcal{I}.\label{compcon13}
\end{align}
\end{subequations}
\section{Finite dimensional setting of the PBCP} \label{section333}
We employ the SIPG method to discretize the minimization problem \eqref{contfunc}-\eqref{contspace}. \smallskip

\noindent
\textbf{\it  Computational-domain discretization.} Let $\mathscr{T}_{h}$ be a shape-regular simplicial triangulation of $\bar{\Omega}$, where the triangle boundaries align with the boundary
$\Gamma$. The triangulation satisfies, for the triangles $K_i,\, K_j \in \mathscr{T}_{h},\, i \neq j$, then $K_i \cap K_j$ is either empty or a vertex or an edge. Let us introduce the following notions which is crucial for the subsequent analysis.
\begin{enumerate}
\item We denote  the set of all interior  and the boundary edges by  $\mathcal{E}_{0,h}$ and 
 $\mathcal{E}_{B,h}$, respectively, while the boundary edges  decomposed  into
 the set of all  Dirichlet and  Neumann boundary edges and denoted by  $\mathcal{E}_{D,h}$ and $\mathcal{E}_{N,h}$, respectively.
We set  $\mathcal{E}_{h}=\mathcal{E}_{0, h} \cup \mathcal{E}_{B,h}.$
\item For each element $K\in \mathscr{T}_h$ and each edge $E\in \mathcal{E}_{h}$, we introduce  the diameter of element $K$ and the length of edge $E$ by 
 $h_K=diam(K)$ and 
 $h_E=length(E)$, respectively. Further,  the maximum diameter of element is denoted by
  $h=max\{h_K: K\in \mathscr{T}_h\}$.
\end{enumerate}
Consider an interior edge $E\in \mathcal{E}_{0, h}$ shared by two elements $K$ and $K^e$ in $\mathscr{T}_{h}$ such that $E=\partial K\cap \partial K^e$. Let $\mathbf{n}_{K}$ and $\mathbf{n}_{K^{e}}$ be the unit outward normals to  $\partial K$ and $\partial K^{e}$, respectively.  We  now define the traces for a piecewise continuous scalar function of $\varphi$ along $E$ denoted by $\varphi|_{E}$ from inside $K$ and $\varphi^{e}|_{E}$ from inside $K^{e}$, respectively. We define the mean and jump of $\varphi$ across the edge $E$, as follows
\begin{align}\label{jumpaveg}
\smean{\varphi} :=\frac{1}{2}(\varphi|_{E}+\varphi^{e}|_{E})  \quad \text{and} \quad 
\sjump{\varphi} :=\varphi|_{E} \mathbf{n}_{K}+\varphi^{e}|_{E} \mathbf{n}_{K^{e}},
\end{align}
respectively.  Analogously, let $\nabla \phi$ be a piecewise continuous vector field, we introduce the mean and jump for $\nabla \varphi$ across an edge $E$ by
\begin{align}\label{gradjumpaveg}
\smean{\nabla \varphi }:=\frac{1}{2}(\nabla \varphi|_{E}+\nabla \varphi^{e}|_{E}) \quad   \text{and} \quad  \sjump{\nabla\varphi}:=\nabla \varphi|_{E} \cdot \mathbf{n}_{K}+\nabla \varphi^{e}|_{E} \cdot \mathbf{n}_{K^{e}},
\end{align}
respectively. For a boundary edge 
$E \in K \cap \Gamma$, we define the mean and jump as $\smean{\nabla \varphi }=\nabla \varphi$ and 
$\sjump{\varphi}=\varphi \mathbf{n}$, respectively, where $\mathbf{n}$ is the  unit outward normal vector on $\Gamma$.

Next, we introduce the discrete spaces for the state, adjoint-state, and control variables, along with the corresponding test space, by
\begin{subequations}
\begin{align}
&\mathpzc{V}_{h}~=~\mathpzc{W}_{h}=\left\{v \in L^{2}(\Omega):\quad v|_{K} \in \mathbb{P}_{1}(K) \quad \forall K \in \mathscr{T}_{h} \right\}, \label{femspacestate}\\
&\mathpzc{Q}_{N,h}~=~\{q \in L^{2}\left(\Gamma_{_N}\right):\quad q|_{E} \in \mathbb{P}_{1}(E) \quad \forall E \in \mathcal{E}_{N,h},\},\label{femspacecontrol} 
\end{align}
\end{subequations}
respectively, where $\mathbb{P}_{1}(K)$ (resp., $\mathbb{P}_{1}(E)$) is the set of linear polynomials in $K$ (resp., on $E$).
One may observed that the  discrete space $\mathpzc{W}_{h}$ and the space of test functions $\mathpzc{V}_{h}$ are identical due to the weak treatment of boundary conditions in DGFEM. 
Further, we set $$
\mathpzc{Y}_h:=H^1(I; \mathpzc{W}_{h}),~~ \mathscr{Z}_h:=H^1(I; \mathpzc{V}_{h}), ~~ \text{and}~~\mathpzc{Q}_{Nad,h}= L^2(I;\mathpzc{Q}_{N,h})\cap \mathpzc{Q}_{Nad}.$$ 
For all $(w,q,v) \in \mathpzc{W}_{h} \times \mathpzc{Q}_{N,h}\times \mathpzc{V}_{h},$ we define the bilinear and linear forms as   
\begin{subequations}\label{3.4libili}
\begin{align}
&a_{h}(w, v)=\sum_{K \in \mathscr{T}_{h}} \int_{K}(\nabla w \cdot \nabla v+a_0 w v)\,dx -\sum_{E \in \mathcal{E}_{0,h} \cup \mathcal{E}_{D,h}} \int_{E}( \smean{\nabla w} \cdot \sjump{v} +\smean{\nabla v} \cdot \sjump{w})\,ds\nonumber \\
&\hspace{2.0cm}+\sum_{E \in \mathcal{E}_{0,h} \cup \mathcal{E}_{D,h}} \frac{\sigma_0}{h_{E}} \int_{E} \sjump{w} \cdot \sjump{v}\,ds, \label{weakform}\\
&b_{h}(q, v)=\sum_{E \in \mathcal{E}_{N,h}} \int_{E} q v\,ds, \label{weakformcon}\\
&l_{h}(v)=\sum_{K \in \mathscr{T}_{h}} \int_{K} f\, v\, dx+\sum_{E \in \mathcal{E}_{D,h}} \int_{E} g_{_D}\left(\frac{\sigma_0}{h_{E}} \mathbf{n}_{E} \cdot \sjump{v}-\smean{\nabla v}\right)\,ds  +\sum_{E \in \mathcal{E}_{N,h}} \int_{E} g_{_N} v\, ds,\label{linearfuc} 
\end{align}
\end{subequations}
where the penalty parameter $\sigma_0 \in \mathbb{R}_{0}^{+}$ which is independent of mesh parameter $h$. The penalty parameter $\sigma_0$ should be sufficiently large to guarantee the stability of the discontinuous Galerkin method. The discontinuous Galerkin approximation solution converges to the continuous Galerkin solution as the penalty parameter goes to infinity, for details (see,  \cite{chapman2014}).

Since, the bilinear form $a_{h}(\cdot, \cdot)$ is consistent with the state equation \eqref{contstate}-\eqref{incon} for a fixed given control $q$ in the weak sense. 
This leads to the following orthogonality relation
\begin{equation}
a_{h}\left(y-y_{h}, \phi\right)=0 \quad \forall \phi \in \mathpzc{V}_{h}.\label{orthogonality}
\end{equation}
The following lemma demonstrates the continuity and coercivity of the bilinear form (see, \cite[Lemma 3.1]{pascal2007}).  The proof can be easily followed by using the Cauchy-Schwarz inequality, trace inequality\eqref{tracegeninq} and the inverse inequality\eqref{inversinq}. We omit the detail of the proof.
\begin{lemma}\label{lemma3.1} Let $a_{h}(\cdot, \cdot)$ be the bilinear form as defined in\eqref{weakform}, Then, the following properties hold
\begin{itemize}
\item[\bf{(i)}]{\tt Continuity:} For all $y, \phi \in \mathpzc{W}_{h}$, for $t\in [0, T]$,
\begin{equation}
 \left|a_{h}(y, \phi)\right| \leq 2\,|\|y|\|_{\mathscr{E}} |\|\phi|\|_{\mathscr{E}}. \label{bddness} 
\end{equation} 
\item[\bf{(ii)}]{\tt Coercivity:} There exists a positive constant $c_{a}$ such that
\begin{equation}
a_{h}(\phi, \phi) \geq c_{a}\, |\|\phi|\|^{2}_{\mathscr{E}} , \quad \forall \phi \in \mathpzc{V}\cap\mathpzc{V}_{h}, \label{coercivity}
\end{equation}
with the following mesh-dependent energy norm
\begin{align}
|\| \phi |\|_{\mathscr{E}} &:=\Big[\sum_{K \in \mathscr{T}_{h}}\big(\|\nabla \phi\|_{L^2(K)}^{2}+a_0\|\phi\|_{L^2(K)}^{2}\big)\nonumber \\
&\hspace{0.5cm}+\sum_{E \in \mathcal{E}_{0,h} \cup \mathcal{E}_{D,h}}\big( h_{E} \|\smean{\nabla \phi}\|_{L^2(E)}^{2}+\frac{\sigma_0}{h_{E}}\| \sjump{\phi} \|_{L^2(E)}^{2}\big)\Big]^{1 / 2},\label{energynorm}
\end{align}
\end{itemize}
where $\sigma_0 \geq \tilde{\sigma}_0 > 0$, however, the constant  $\tilde{\sigma}_0$  may depend on the mesh and the degree of polynomial.
\end{lemma}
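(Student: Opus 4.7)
The plan is to treat continuity and coercivity separately, using the mesh-dependent energy norm $|\|\cdot|\|_{\Es}$ to absorb the jump and average terms. For continuity, I will apply the Cauchy--Schwarz inequality termwise to the three contributions in $a_h(w,v)$. The volume contribution $\sum_K \int_K(\nabla w\cdot \nabla v + a_0 wv)\,dx$ is controlled directly by the first bracket of \eqref{energynorm}. For the two consistency terms $\int_E \smean{\nabla w}\cdot\sjump{v}\,ds$ and $\int_E \smean{\nabla v}\cdot\sjump{w}\,ds$, I will insert a balancing factor $h_E^{1/2} h_E^{-1/2}$, so that Cauchy--Schwarz produces exactly the norm pieces $h_E\|\smean{\nabla\cdot}\|_{L^2(E)}^2$ and $h_E^{-1}\|\sjump{\cdot}\|_{L^2(E)}^2$ appearing in \eqref{energynorm}. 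The penalty term is handled analogously by splitting $\sigma_0/h_E$ as $(\sigma_0/h_E)^{1/2}(\sigma_0/h_E)^{1/2}$. Summing and a final Cauchy--Schwarz in $\ell^2$ over $K$ and $E$ deliver $|a_h(w,\phi)|\le 2\,|\|w|\|_{\Es}|\|\phi|\|_{\Es}$.

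For coercivity, the volume and penalty terms are already sign-definite and reproduce the corresponding parts of $|\|\phi|\|_{\Es}^2$, so the real work is to dominate the symmetric consistency term $-2\sum_E \int_E \smean{\nabla \phi}\cdot \sjump{\phi}\,ds$. I will bound this term by Young's inequality with a free parameter $\epsilon>0$:
\begin{equation*}
2\Bigl|\int_E \smean{\nabla\phi}\cdot\sjump{\phi}\,ds\Bigr|
\le \epsilon\,h_E\|\smean{\nabla\phi}\|_{L^2(E)}^2 + \frac{1}{\epsilon\,h_E}\|\sjump{\phi}\|_{L^2(E)}^2.
\end{equation*}
Then I will use the trace inequality \eqref{tracegeninq} followed by the inverse inequality \eqref{inversinq} to control $h_E\|\smean{\nabla\phi}\|_{L^2(E)}^2$ by $C_{\mathrm{tr}}\|\nabla\phi\|_{L^2(\omega_E)}^2$, where $\omega_E$ is the patch of (at most two) elements sharing $E$. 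After summing over edges, this absorbs a fraction $\epsilon C_{\mathrm{tr}}$ of the gradient energy.

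Collecting terms, I expect an inequality of the form
\begin{equation*}
a_h(\phi,\phi)\ge (1-\epsilon C_{\mathrm{tr}})\sum_K\|\nabla\phi\|_{L^2(K)}^2 + \sum_K a_0\|\phi\|_{L^2(K)}^2 + \Bigl(\sigma_0-\tfrac{1}{\epsilon}\Bigr)\sum_E \tfrac{1}{h_E}\|\sjump{\phi}\|_{L^2(E)}^2.
\end{equation*}
The average term $h_E\|\smean{\nabla\phi}\|_{L^2(E)}^2$ in $|\|\phi|\|_{\Es}^2$ is itself dominated by $\|\nabla\phi\|_{L^2(\omega_E)}^2$ via the same trace/inverse argument, so it need not be produced separately by $a_h(\phi,\phi)$; it is controlled by what we already have in the gradient energy, up to adjusting the constant. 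Choosing $\epsilon$ small enough (say $\epsilon=1/(2C_{\mathrm{tr}})$) and then requiring $\sigma_0\ge \tilde\sigma_0 := 2C_{\mathrm{tr}}+\eta$ for some $\eta>0$ makes all coefficients strictly positive, and a standard minimum-of-coefficients argument yields the claim with $c_a=c_a(\tilde\sigma_0,C_{\mathrm{tr}})$.

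The main obstacle is the careful selection of $\epsilon$ and the lower bound $\tilde\sigma_0$: the trace constant $C_{\mathrm{tr}}$ from \eqref{tracegeninq}--\eqref{inversinq} depends on the shape regularity of $\Ts_h$ and on the polynomial degree, so $\tilde\sigma_0$ inherits this dependence. Everything else reduces to bookkeeping with Cauchy--Schwarz and Young's inequalities, so as the authors note the detailed calculation can be safely omitted.
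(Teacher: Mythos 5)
Your proposal is correct and follows essentially the same route the paper intends: the paper omits the details but points to exactly this standard SIPG argument (Cauchy--Schwarz for continuity; Young's inequality combined with the trace inequality \eqref{tracegeninq} and inverse inequality \eqref{inversinq} to absorb the consistency term for coercivity, with $\sigma_0$ above a threshold depending on shape regularity and polynomial degree), as in \cite[Lemma 3.1]{pascal2007}. Your handling of the extra average term $h_E\|\smean{\nabla \phi}\|_{L^2(E)}^2$ in the energy norm (dominating it by the elementwise gradient energy rather than extracting it from $a_h(\phi,\phi)$) is the standard fix and is sound.
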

\noindent
Thus, we use the following notation for the time dependent energy-norm 
\begin{equation}
|\| \phi |\|_{L^2(I;~ \mathscr{E})}~=~\Big(\sum_{i=1}^{N_T}\int_{t_{i-1}}^{t_i} |\| \phi |\|_{\mathscr{E}} ^2\,dt\Big)^{\frac{1}{2}}.\hspace{4cm} \label{eng2.19}
\end{equation}
Further, we assume that the given functions $f$, $y_{d}$, $a_0$,  $q_{d}$, $g_{_N}, r_{_N}$,  $q_{a}$, and the upper bound $q_{b}$, respectively, are approximated by  $f_{h},\, y_{d, h},\, a_{0,h},\, q_{d,h},\, g_{_N,h},\, r_{_N,h},\, q_{a,h},\,\text{and}\; q_{b,h}$, where
\begin{equation}
f_{h},\,y_{h,0},\, y_{d, h},\, a_{0,h} \in L^2(I;\mathpzc{V}_{h}),\quad \text{and} \quad q_{d,h},\, g_{_N,h},\, r_{_N,h},\, q_{a,h},\, q_{b,h} \in L^2(I;\mathpzc{Q}_{Nad,h}). \label{givendata}
\end{equation}
While, the Dirichlet boundary data $g_{_D}$ is approximated by $g_{_D,h}\in L^2(I;\mathpzc{Q}_{D,h})$, where 
$$ \mathpzc{Q}_{D,h}=\{ q \in L^{2}(\Gamma_{_D}):\quad q|_{E} \in \mathbb{P}^{1}(E) \quad \forall E \in \mathcal{E}_{D,h}\}.$$
Then, the semidiscrete formulation of the problem  \eqref{weakcontfunc}--\eqref{initialcnd} can be written as 
\begin{subequations}
\begin{align}
&\underset{q_h \in \mathpzc{Q}_{Nad,h}}{\min} \int_0^T\Big\{\frac{1}{2}\int_{\O}(y_h-y_{d,h})^2 dx+\frac{\alpha}{2} \int_{\Gamma_{N}}(q_h-q_{d,h})^2ds+\int_{\Gamma_{N}} r_{_N,h}\, y_h ds\Big\}dt \label{discretfunc}\\
&\text{subject to}  \nonumber\\
& \hspace{2.2cm}\big(\frac{\partial y_h}{\partial t}, \phi_h)+a_{h}\left(y_{h}, \phi_h \right)=l_{h}\left(\phi_h \right)+b_{h}\left(q_{h}, \phi_h \right), \quad \phi_h \in \mathpzc{V}_{h},\label{discretformstate}\\
& \hspace{3.2cm} y_h(x,0)=y_{h,0},\label{disint}
\end{align}
\end{subequations}
where $y_{h,0}(x)$ is a suitable approximation or projection of $y_0(x)$. 

It follows that the control problem \eqref{discretfunc}--\eqref{disint} admits a unique solution $(y_h, q_h)\in \mathpzc{Y}_h\times \mathpzc{Q}_{Nad,h}$ if and only if the theere exist a discrete adjoint state $z_{h} \in \mathscr{Z}_{h}$ such that the following optimality conditions holds,
\begin{subequations}
\begin{align}
\big(\frac{\partial y_h}{\partial t}, \phi_h)+a_{h}\left(y_{h}, \phi_h \right)&= l_{h}\left(\phi_h \right)+b_{h}\left(q_{h}, \phi_h \right), \quad \phi_h \in \mathpzc{V}_{h},\label{disoptstate}\\
y_h(x,0)&=y_{h,0},\label{disint1}\\
-\big(\frac{\partial z_h}{\partial t}, \psi_h \big)+a_h(\psi_h, z_h)&=\left(y_h-y_{d,h}, \psi\right)+\left(r_{_N,h}, \psi_h \right)_{L^2(\Gamma_{_N})}, \quad \forall \psi_h \in \mathpzc{V}_h, \label{disoptadjoint-state}\\
z_h(x,T)&=0,\label{distfinal}\\
(\alpha \left(q_h-q_{d,h}\right)+z_h, \varphi_h-&q_h)_{L^2(\Gamma_{_N})} \geq 0, \quad \forall \varphi_h \in \mathpzc{Q}_{Nad,h} .\label{disfirstoptcond}
\end{align}
\end{subequations}
Analogous to the continuous case, introducing the discrete co-control $\mu_{h} \in \mathpzc{Q}_{Nad,h}$, allows us to restate the first-order optimality condition\eqref{disfirstoptcond} as follows
\begin{subequations}
\begin{align}
&\mu_{h}+\alpha \left(q_{h}-q_{d,h}\right)+z_{h}=0, \label{disopt11}\\
&\mu_{h}-\max \left\{0,  \mu_{h}+\gamma\left(q_{h}-q_{b,h}\right)\right\}+\min \left\{0, \mu_{h}+\gamma\left(q_{h}-q_{a, h}\right)\right\}=0.\label{disopt12}
\end{align}
\end{subequations}
By defining $\mu_{h}(x,t)=\mu_{b,h}(x,t)-\mu_{a,h}(x,t)$, for $(x,t)\in \mathcal{E}_{N,h} \times I$, we can equivalently express \eqref{disopt12} as the following discrete complementarity system, for $(x,t)\in \mathcal{E}_{N,h} \times I$,
\begin{subequations}
\begin{eqnarray}
&\mu_{a,h} \geq 0, \quad q_{a,h}- q_{h} \geq 0, \quad \mu_{a,h}\left(q_{a,h}-q_{h}\right)=0,\label{discompcondi1}\\
&\mu_{b,h} \geq 0, \quad q_{h}-q_{b,h} \geq 0, \quad \mu_{b,h}\left(q_{h}-q_{b,h}\right)=0.\label{discompcondi2}
\end{eqnarray}
\end{subequations}
We then define the discrete active and inactive sets as follows
\begin{subequations}
\begin{align}
&\mathpzc{A}_{a, h}=\bigcup\left\{(x,t)\in E\times I \mid \mu_{h}+\gamma\left(q_{h}-q_{a,h}\right)<0, \quad \forall E \in \mathcal{E}_{N,h}\right\}, \label{activeset1}\\
&\mathpzc{A}_{b, h}=\bigcup\left\{(x,t) \in E\times I \mid \mu_{h}+\gamma\left(q_{h}-q_{b,h}\right)>0, \quad \forall E \in \mathcal{E}_{N,h}\right\},\label{activeset2} \\
\text{and}& \nonumber\\
&\mathcal{I}_{h}=\mathcal{E}_{N,h}\times I \backslash\left\{\mathpzc{A}_{a, h} \cup \mathpzc{A}_{b, h}\right\},
\end{align}
\end{subequations}
respectively. Analogous to the continuous case, the complementarity conditions \eqref{discompcondi1}--\eqref{discompcondi2} can be restated as
\begin{subequations}
\begin{align}
&q_{h}=q_{a,h}, \quad \mu_{b,h}=0, \quad \mu_{h} \leq 0~~ \quad \text { on }~~ \mathpzc{A}_{a, h}, \\
&q_{h}=q_{b,h}, \quad \mu_{a,h}=0, \quad \mu_{h} \geq 0~~ \quad \text { on }~~ \mathpzc{A}_{b, h}, \\
q_{a,h} &< q_{h}<q_{b,h}, \quad \mu_{a,h}=\mu_{b,h}=0, \quad \mu_{h}=0~~  \text { on }~~ \mathcal{I}_{h}.
\end{align}
\end{subequations}
We proceed to develop the space-time approximations of the optimal BCP  \eqref{contfunc}-\eqref{contspace} utilizing the combination of backward Euler time-stepping scheme with the discontinuous Galerkin spatial discretization method.\smallskip

\noindent
\textbf{\it Space-time discretization.} 
We consider a partition of the time domain $I=[0,T]$ into $N$ subintervals, denoted as $I_i=(t_{i-1},t_i]$ such that $I=\cup_{i=1}^{N_T} \bar{I}_i$, where $0=t_0<t_1<t_2<\ldots<t_N=T$. The time step size is given by  $k_i=t_i-t_{i-1}$, $i=1,\,2,\ldots,\,{N_T}$. 

Furthermore, we construct $\mathscr{T}^i_h$, $i=1,\,2,\,\ldots,\,N_T,$ (similar to $\mathscr{T}_h$) as the triangulation of $\bar{\Omega}$ at the time level $t_i$. At each time level $t_i$, we introduce the set of all edges $\mathcal{E}^i_{h}$, which can be decomposed into interior edges $\mathcal{E}^i_{0, h}$ and boundary edges $\mathcal{E}^i_{B,h}$. The boundary edges are further divided into Dirichlet boundary edges $\mathcal{E}^i_{D,h}$ and Neumann boundary edges $\mathcal{E}^i_{N,h}$, respectively. 

Consistent with the semi-discrete framework, we introduce the finite-dimensional spaces $\mathpzc{Y}_h^i$, $\mathscr{Z}_h^i$, and $\mathpzc{Q}^i_{Nad,h}$ associated with the triangulation $\mathscr{T}^i_h$ at each time level $t_i$, for $i=[1:N_T]$, by
\begin{subequations}
\begin{align}
&\mathpzc{Y}^i_{h}=\mathscr{Z}^i_{h}=\left\{v \in L^{2}(\Omega):\quad v|_{K} \in \mathbb{P}_{1}(K) \quad \forall K \in \mathscr{T}_{h}^i \right\}, \label{femspacestatefully}\\
\text{and} & \nonumber\\
&\mathpzc{Q}^i_{N,h}=\big\{q \in L^2(\Gamma_{_N}):\quad q|_{E} \in \mathbb{P}_{1}(E) \quad \forall E \in \mathcal{E}^i_{N,h}\big\},\label{femspacecontrolfully}
\end{align}
\end{subequations}
respectively. We set $\mathpzc{Q}^i_{Nad,h}:=\mathpzc{Q}_{Nad}\cap \mathpzc{Q}^i_{N,h}, i= [1:N_T].$

Then, the space-time discretization of the OCP \eqref{discretfunc}--\eqref{disint} is given by 
\begin{subequations}
\begin{align}
&\underset{q^i_h \in \mathpzc{Q}^i_{Nad,h}}{\min}\sum_{i=1}^{N_T} k_i \Big\{\frac{1}{2}\int_\O (y^i_h-y^i_{d,h})^2dx+\frac{\alpha}{2} \int_{\Gamma_{N}}(q^i_h-q^i_{d,h})^{2}ds+\int_{\Gamma_{N}} r^i_{_N,h}\, y^i_h ds\Big\} \label{fullydiscretfunc}\\
\text {over} & \nonumber\\
&~~\big(\frac{y^i_h-y^{i-1}_h}{k_i}, \phi_h)+a_{h}\left(y^i_{h}, \phi_h \right)=l^i_{h}\left(\phi_h \right)+b_{h}\left(q^i_{h}, \phi_h \right), \quad \phi_h \in \mathpzc{V}^i_{h},\; i\in[1:N_T], \label{fullydiscretformstate}\\
&~~~~y_h(x,0)=y_{h}^0. \label{fullydisint}
\end{align}
\end{subequations}
The fully-discrete problem \eqref{fullydiscretfunc}--\eqref{fullydisint} has a unique solution $(y_h^i,q^i_h)\in \mathpzc{Y}_h^i\times \mathpzc{Q}^i_{Nad,h}$, $i\in [1:N_T]$ iff there exists a unique $z_h^{i-1}\in \mathscr{Z}_h^i,\, i\in [N_T:1]$ such that the following optimality conditions satisfies, $\forall \phi_h,\, \psi_h \in \mathpzc{V}_h$, 
\begin{subequations}
\begin{align}
\big(\frac{y^i_h-y^{i-1}_h}{k_i}, \phi_h)+a_{h}\left(y^i_{h}, \phi_h \right)&=l^i_{h}\left(\phi_h \right)+b_{h}\left(q^i_{h}, \phi_h \right), \quad   1\leq i \leq N_T,\label{disoptstate12}\\ 
y_h(x,0)&=y_{h,0},\label{disint12}\\
-\big(\frac{z^i_h-z^{i-1}_h}{k_i}, \psi_h \big)+a_h(\psi_h, z^{i-1}_h)&=\left(y^i_h-y^i_{d,h}, \psi\right)+\left(r^i_{_N,h}, \psi_h \right)_{L^2(\Gamma_{_N})}, \quad 
N_T\leq i \leq 1, \label{disoptadjoint-state12}\\ 
z_h(x,t_N)&=0,\label{distfinal12}\\
(\alpha \left(q^i_h-q^i_{d,h}\right)+z^{i-1}_h, \varphi_h&-q^i_h)_{L^2(\Gamma_{_N})} \geq 0, \quad \forall \varphi_h \in \mathpzc{Q}^i_{Nad,h}, \quad 1\leq i \leq N_T. 
 \label{disfirstoptcond12} 
 \end{align}
\end{subequations}
For each time subinterval $I_i,\; i=1,\,2\,\ldots,\,N_T$,
we define the piecewise linear and continuous time interpolants $Y_h(t)$ and $Z_h(t)$, valid for all   $t\in I_i,$ by
\begin{align*}
& Y_h(t)|_{t\in I_i}:=\frac{t-t_{i-1}}{k_i}y_h^i+\frac{t_i-t}{k_i}y_h^{i-1}, \quad \text{and}  \quad 
Z_h(t)|_{t\in I_i}:=\frac{t-t_{i-1}}{k_i}z_h^i+\frac{t_i-t}{k_i}z_h^{i-1},  
\end{align*}
respectively. In addition, we set $Q_h(t)|_{t\in I_i}:=q_h^i$. Similarly, for $t\in I_i,\; i=1,\,2\,\ldots,\,N_T$, we define
\begin{eqnarray}
&&\mathcal{G}_{D,h}(t)~:=~\frac{t-t_{i-1}}{k_i}g_{_D,h}^i+\frac{t_i-t}{k_i}g_{_D,h}^{i-1},\quad
\mathcal{G}_{N,h}(t)~:=~\frac{t-t_{i-1}}{k_i}g_{_N,h}^i+\frac{t_i-t}{k_i}g_{_N,h}^{i-1}, \nonumber \\
&& \text{and} \quad \mathcal{R}_{N,h}(t)~:=~\frac{t-t_{i-1}}{k_i}r_{_N,h}^i+\frac{t_i-t}{k_i}r_{_N,h}^{i-1},  \nonumber
\end{eqnarray}
respectively. For any function $\omega \in C(I,L^2(\Omega))$, let $\hat{\omega}(x,t)|_{t\in (t_{i-1},t_i]}=\omega(x,t_i)$ and $\tilde{\omega}(x,t)|_{t\in (t_{i-1},t_i]}=\omega(x,t_{i-1})$ denote the restrictions of $\omega(x,\cdot)$ to the interval $(t_{i-1},t_i]$. Then, for each time subinterval $I_i,\; i=1,\,2\,\ldots,\,N_T$, the optimality conditions can be restated as follows
\begin{subequations}
\begin{align}
\big(\frac{\partial Y_h}{\partial t}, \phi_h)+a_{h}(\hat{Y}_h, \phi_h)&=\hat{l}_{h}\left(\phi_h \right)+b_{h}\left(Q_h, \phi_h \right), \quad  \forall \phi_h \in \mathpzc{V}^i_{h},\, 1\leq i \leq N_T,\label{redisoptstate12}\\ 
Y^0_h&=y_{h,0},\label{redisint12}\\
-\big(\frac{\partial Z_h}{\partial t}, \psi_h \big)+a_h(\psi_h, \tilde{Z}_h)&=\big(\hat{Y}_h-\hat{y}_{d,h}, \psi_h\big)+\left(\hat{\mathcal{R}}_{N,h}, \psi_h \right)_{L^2(\Gamma_{_N})},\,\forall \underset{ N_T\leq i \leq 1,}{\psi_h \in \mathpzc{V}^i_h,} \label{redisoptadjoint-state12}\\ 
Z_h^N&=0,\label{redistfinal12}\\
\big(\alpha \left(Q_h-\hat{q}_{d,h}\right)+\tilde{Z}_h,& \varphi_h-Q_h\big)_{L^2(\Gamma_{_N})} \geq 0, \quad \forall \varphi_h \in \mathpzc{Q}^i_{Nad,h},\, 1\leq i \leq N_T. 
\label{redisfirstoptcond12}
\end{align}
\end{subequations}
By introducing the discrete co-control $\hat{\mu}_{h} \in \mathpzc{Q}^i_{Nad,h}$,  for each $t\in  I_i,\,i=1,\,2,\ldots, N_T$, we can restate the first-order optimality condition\eqref{redisfirstoptcond12}  as follows
\begin{subequations}
\begin{align}
&\hat{\mu}_{h}+\alpha \left(Q_{h}-\hat{q}_{d,h}\right)+\tilde{Z}_{h}=0, \label{redisopt11}\\
&\hat{\mu}_{h}-\max \left\{0, \hat{\mu}_{h}+\gamma\left(Q_{h}-\hat{q}_{b,h}\right)\right\}+\min \left\{0, \mu_{h}+\gamma\left(Q_h-\hat{q}_{a,h}\right)\right\}=0.\label{redisopt12}
\end{align}
\end{subequations}
In analogy to the continuous case, an equivalent formulation of \eqref{redisopt12}  with $\hat{\mu}_{h}=\hat{\mu}_{b,h}-\hat{\mu}_{a,h}$ can be expressed as follows, for $(x,t)\in E\times I_i, \forall E\in \mathcal{E}^i_{N,h}, i\in [1:N_T]$,
\begin{subequations}
\begin{align}
&\hat{\mu}_{a,h}\geq 0, \quad \hat{q}_{a,h}- Q_h \geq 0, \quad \hat{\mu}_{a,h}\left(\hat{q}_{a,h}-Q_{h}\right)=0,\label{discompcondi111}\\
&\hat{\mu}_{b,h} \geq 0, \quad Q_{h}-\hat{q}_{b,h} \geq 0, \quad \hat{\mu}_{b,h}\left(Q_{h}-\hat{q}_{b,h}\right)=0.\label{discompcondi222}
\end{align}
\end{subequations}
For $i\in [1:N_T],$ the active  and inactive sets are given by 
\begin{subequations}
\begin{align}
&\mathpzc{A}^i_{a, h}=\bigcup\left\{(x,t) \in E\times I_i \mid \hat{\mu}_{h}+\gamma\left(Q_{h}-\hat{q}_{a,h}\right)<0, \; \forall E \in \mathcal{E}^i_{N,h}\right\} \label{activeset111}\\
&\mathpzc{A}^i_{b, h}=\bigcup\left\{(x,t) \in E\times I_i \mid \hat{\mu}_{h}+\gamma\left(Q_{h}-\hat{q}_{b,h})\right)>0, \; \forall E \in \mathcal{E}^i_{N,h}\right\}\label{activeset222}\\
&\mathcal{I}^i_{h}=\mathcal{E}^i_{N,h}\times I_i \backslash\{\mathpzc{A}^i_{a, h} \cup \mathpzc{A}^i_{b, h}\}.
\end{align}
\end{subequations}
 Furthermore, for $i\in [1:N_T]$, the complementarity conditions \eqref{discompcondi111}--\eqref{discompcondi222} can be compactly expressed as
 \begin{subequations}
\begin{align}
&Q_{h}=\hat{q}_{a,h}, \quad \hat{\mu}_{b,h}=0, \quad \hat{\mu}_{h}\leq 0~~ \quad \text { on }~~ \mathpzc{A}^i_{a, h}, \\
&Q_{h}=\hat{q}_{b,h}, \quad \hat{\mu}_{a,h}=0, \quad \hat{\mu}_{h} \geq 0~~ \quad \text { on }~~ \mathpzc{A}^i_{b, h}, \\
&\hat{q}_{a,h}<Q_{h}<\hat{q}_{b,h}, \quad \hat{\mu}_{a,h}=\hat{\mu}_{b,h}=0, \quad \hat{\mu}_{h}=0~~ \text { on }~~ \mathcal{I}^i_{h}.
\end{align}
\end{subequations}
The subsequent error analysis will frequently rely on the following inverse and trace inequalities (cf., \cite{brenner2002, pascal2007}), which are necessary for the analysis
\begin{lemma}
Let $\mathpzc{D}$ be a bounded domain (or polygonal) with  sufficiently smooth  boundary $\Gamma_{_\mathpzc{D}}$. Then there exists the positive constant $c_{t r}$ such that the following estimates holds, for any $\chi\in H^2(\mathpzc{D})\cap H^1(\mathpzc{D})$, 
\begin{align}
&\|\chi\|_{0, \Gamma_{_\mathpzc{D}}} ~\leq~ c_{t r}\|\chi\|_{1, \mathpzc{D}}, \quad \forall \chi\in H^{1}(\mathpzc{D}), \label{traceinq}\\
&\|\chi\|_{0, \Gamma_{_\mathpzc{D}}} ~\leq~ c_{t r}\left(h_{\mathpzc{D}}^{-1}\|\chi\|_{0, \mathpzc{D}}^{2}+h_{\mathpzc{D}}\|\nabla \chi\|_{0, \mathpzc{D}}^{2}\right)^{1 / 2}, \quad \forall \chi \in H^{1}(\mathpzc{D}),\label{tracegeninq}
\end{align}
and, the inverse estimate
\begin{equation}
|\chi|_{j, \mathpzc{D}} \leq c_{inv}\, h_{\mathpzc{D}}^{s-j}|\chi|_{i, \mathpzc{D}} \quad \forall \chi \in \mathbb{P}_{k}(\mathpzc{D}), \quad 0 \leq s \leq j \leq 2.\hspace{0.3cm} \label{inversinq}
\end{equation}  
\end{lemma}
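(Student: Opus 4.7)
The plan is to handle the three estimates in turn via standard reference-element/scaling arguments, since each is a classical finite-element analysis result (see \cite{brenner2002, pascal2007}).

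For the first trace inequality \eqref{traceinq}, I would invoke the continuous trace theorem: the trace operator $\gamma_0 : H^1(\mathpzc{D}) \to L^2(\Gamma_{_\mathpzc{D}})$ is bounded, so $\|\chi\|_{0,\Gamma_{_\mathpzc{D}}} \le c_{tr}\|\chi\|_{1,\mathpzc{D}}$. For a smooth/polygonal $\mathpzc{D}$ this is proved by a partition of unity that reduces to the half-space case, where a direct computation using the fundamental theorem of calculus in the normal direction, Cauchy--Schwarz, and Young's inequality suffices. Alternatively, for polygonal $\mathpzc{D}$ one may apply Green's identity to $\int_{\Gamma_{_\mathpzc{D}}} \chi^2 \,ds$ against a smooth vector field $\mathbf{b}$ satisfying $\mathbf{b}\cdot \mathbf{n} = 1$ on $\Gamma_{_\mathpzc{D}}$.

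For the scaled trace inequality \eqref{tracegeninq}, I would apply an affine transformation $F:\hat{\mathpzc{D}}\to \mathpzc{D}$ from a reference element $\hat{\mathpzc{D}}$ of unit size, writing $\hat{\chi} = \chi\circ F$. On $\hat{\mathpzc{D}}$ the classical trace inequality gives $\|\hat{\chi}\|_{0,\partial\hat{\mathpzc{D}}}^2 \lesssim \|\hat{\chi}\|_{0,\hat{\mathpzc{D}}}^2 + \|\nabla \hat{\chi}\|_{0,\hat{\mathpzc{D}}}^2$. I would then pull back to $\mathpzc{D}$ using the standard scalings $\|\chi\|_{0,\Gamma_{_\mathpzc{D}}}^2 \simeq h_{\mathpzc{D}}^{d-1}\|\hat{\chi}\|_{0,\partial\hat{\mathpzc{D}}}^2$, $\|\chi\|_{0,\mathpzc{D}}^2 \simeq h_{\mathpzc{D}}^{d}\|\hat{\chi}\|_{0,\hat{\mathpzc{D}}}^2$, and $\|\nabla \chi\|_{0,\mathpzc{D}}^2 \simeq h_{\mathpzc{D}}^{d-2}\|\nabla \hat{\chi}\|_{0,\hat{\mathpzc{D}}}^2$. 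Dividing by $h_{\mathpzc{D}}^{d-1}$ produces exactly the factors $h_{\mathpzc{D}}^{-1}\|\chi\|_{0,\mathpzc{D}}^2 + h_{\mathpzc{D}}\|\nabla\chi\|_{0,\mathpzc{D}}^2$ on the right-hand side, with a constant $c_{tr}$ independent of $h_{\mathpzc{D}}$ thanks to shape-regularity.

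For the inverse estimate \eqref{inversinq}, I would use the equivalence of norms on the finite-dimensional space $\mathbb{P}_k(\hat{\mathpzc{D}})$: since all norms on a finite-dimensional space are equivalent, for any $\hat{\chi}\in \mathbb{P}_k(\hat{\mathpzc{D}})$ we have $|\hat{\chi}|_{j,\hat{\mathpzc{D}}} \le C\,|\hat{\chi}|_{s,\hat{\mathpzc{D}}}$ with $C$ depending only on $k$, $j$, $s$ and $\hat{\mathpzc{D}}$. Mapping back via the affine transformation produces the $h_{\mathpzc{D}}^{s-j}$ factor through the standard seminorm scaling $|\chi|_{m,\mathpzc{D}} \simeq h_{\mathpzc{D}}^{d/2 - m}|\hat{\chi}|_{m,\hat{\mathpzc{D}}}$. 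The main (and only really delicate) point is that the reference inequality is valid because $\mathbb{P}_k$ is finite-dimensional, so the ordering $0\le s\le j\le 2$ is harmless.

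Since all three inequalities are classical, I do not expect a substantive obstacle; the only care required is in tracking the Jacobian-induced scaling factors under shape-regularity to guarantee that the constants $c_{tr}$ and $c_{inv}$ are independent of $h_{\mathpzc{D}}$. Consequently, as the authors note, the detailed computation can be safely omitted and the reader referred to \cite{brenner2002, pascal2007}.
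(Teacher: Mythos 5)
Your proposal is correct and is essentially the paper's treatment: the paper gives no proof of this lemma at all, simply citing \cite{brenner2002, pascal2007}, and your reference-element scaling arguments for \eqref{tracegeninq}--\eqref{inversinq} and the continuous trace theorem for \eqref{traceinq} are exactly the standard proofs found in those references. The one point deserving a word of care is the inverse estimate: $|\cdot|_{s,\hat{\mathpzc{D}}}$ is only a seminorm, so the ``equivalence of norms'' step should be carried out on the quotient of $\mathbb{P}_k(\hat{\mathpzc{D}})$ by $\mathbb{P}_{s-1}(\hat{\mathpzc{D}})$ (legitimate because $s\le j$ implies the $j$-th seminorm also annihilates $\mathbb{P}_{s-1}$), after which the stated scaling yields the $h_{\mathpzc{D}}^{s-j}$ factor as you describe.
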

\noindent
Further, we exploit the  approximation results from \cite[Proposition $2.4$ and Theorem $4.3$]{baker1995}.
Define  $\psi=e-\phi_h$, and let  $\phi_h$ be the best piecewise constant approximation of $e$.  Then there exist a positive constant $C$ such that
\begin{eqnarray}
\|\psi\|_{L^2(K)}\leq C h_K\|\nabla e\|_{L^2(K)},\quad K\in \mathscr{T}_h.\nonumber 
\end{eqnarray} 
Here, $e$ represents either the state error $e_y(=y(Q_h)-Y_h)$ or the adjoint-state error $e_z(=z(Q_h)-Z_h)$.
An application of the trace inequality\eqref{tracegeninq} leads to
\begin{eqnarray}
\sum_{K\in \mathscr{T}_h}h_K^{-2}\|\psi\|^2_{L^2(K)}\leq C\sum_{K\in \mathscr{T}_h} \|\nabla e\|^2_{L^2(K)}.\label{appinq4.24} 
\end{eqnarray}
Additionally, exploiting the mesh property $h_E^{-1}h_K\leq 1$ to have
\begin{align}
\sum_{E\in \mathcal{E}_{0,h}}h_E^{-1}\big(\|\smean{\psi}\|^2_{L^2(E)}+\|\sjump{\psi}\|^2_{L^2(E)}\big)
&\leq C \sum_{K\in \mathscr{T}_h} \|\nabla e\|^2_{L^2(K)},\label{appinq4.25} \\
\text{and} \hspace{4.4cm}
\sum_{E\in \mathcal{E}_{B,h}}h_E^{-1}\|\psi\|^2_{L^2(E)} 
&\leq C\sum_{K\in \mathscr{T}_h} \|\nabla e\|^2_{L^2(K)}.\label{appinq4.26} 
\end{align}
We now proceed to the error analysis of the finite element approximation for the  BCP governed by equations\eqref{contfunc}-\eqref{contspace}.
\section{A posteriori Error Analysis}\label{sec4errest}
This section focuses on the development of residual-based a posteriori error estimators for quantifying errors in the state, adjoint-state, control, and co-control, respectively. Specifically, we measure the state and adjoint-state errors in the $|\|\cdot\||_{L^2(I;\mathscr{E})}$ as defined in \eqref{eng2.19}, while the control $(q)$ and co-control $(\mu)$ errors are measured in the $L^2(I;L^{2}(\Gamma_{_N}))$-norm.
To initiate the error analysis for the upper bounds, we first introduce the residual-type error estimators tailored to the PBCP governed by equations \eqref{contfunc}-\eqref{contspace}, as
\begin{enumerate}
\item {\tt Element-wise residuals:} 
We begin by introducing element-wise residuals for the state and adjoint variables, denoted by
$\eta_{y,K}^i,\, \text{and}\; \eta_{z,K}^i$ ,\;$i\in [1:N_T]$,  respectively. For $ K\in \mathscr{T}_h^i,\;i\in [1:N_T],$ these residuals are defined by, for  $i\in [1:N_T]$,  
\begin{subequations}\label{spacest4.4}
\begin{align}
(\eta_{y,K}^i)^2&:=\int^{t_i}_{t_{i-1}}h^2_K\|\hat{f}_h-Y_{h,t}+\Delta\hat{Y}_h-a_{0,h}\hat{Y}_h\|^2_{L^2(K)}\,dt,\\
(\eta_{z,K}^i)^2&:=\int^{t_i}_{t_{i-1}}h_K^2\|\hat{Y}_h-\hat{y}_{d,h}+Z_{h,t}+\Delta \tilde{Z}_h-a_{0,h}\tilde{Z}_h\|^2_{L^2(K)}\,dt.
\end{align}
\end{subequations}
\item {\tt Interior edge residuals:} 
Let $\eta^i_{y,E_0} $ and $\eta^i_{z,E_0}$ be the interior edge residuals corresponding to state and adjoint-state variables, respectively, associated with each interior edge $E_0\in \mathcal{E}_{0,h}^i$, are defined as, for $i\in [1:N_T],$ 
\begin{subequations}
\begin{align}
&(\eta^i_{y,E_0})^2:=\int^{t_i}_{t_{i-1}}\big[h_{E_0}\|\sjump{\nabla \hat{Y}_h}\|^2_{L^2(E_0)}+\frac{\sigma^2_0}{h_{E_0}}\|\sjump{\hat{Y}_h}\|^2_{L^2(E_0)}\big]\,dt, \\
&(\eta^i_{z,E_0})^2:=\int^{t_i}_{t_{i-1}}\big[h_{E_0} \|\sjump{\nabla \tilde{Z}_h}\|^2_{L^2(E_0)}+\frac{\sigma^2_0}{h_{E_0}} \| \sjump{\tilde{Z}_h}\|^2_{L^2(E_0)}\big]\,dt, \displaybreak[0]\\
&(\tilde{\eta}_{y,E_0}^i)^2:=\int^{t_i}_{t_{i-1}}\big[\frac{\sigma^2_0}{ h_{E_0}}\|\sjump{\hat{Y}_h}\|^2_{L^2(E_0)}+\frac{\sigma_0^2}{ h_{E_0}}\|\sjump{Y_h}\|^2_{L^2(E_0)}\big]\,dt ,\\ 
&(\tilde{\eta}_{z,E_0}^i)^2:=\int^{t_i}_{t_{i-1}}\big[\frac{\sigma_0^2}{h_{E_0}}\| \sjump{\tilde{Z}_h}\|^2_{L^2(E_0)}+\frac{\sigma_0^2}{h_{E_0}}\|\sjump{Z_h}\|^2_{L^2(E_0)}\big]\,dt. 
\end{align}
\end{subequations}
\item {\tt Dirichlet and Neumann boundary edge residuals:} We define the boundary edge residuals, which are associated with $E_D\in \mathcal{E}_{_D,h}^i$ and $E_N\in \mathcal{E}_{_N,h}^i $, $i\in [1:N_T]$, respectively, and given as
\begin{itemize}
\item[(i)] Dirichlet boundary edges residuals $\eta^i_{y,g_D}$, $\eta^i_{y,E_D}$ and $\eta^i_{z,E_D}$, for $i\in [1:N_T]$, by 
\begin{subequations}
\begin{align}
&(\eta^i_{y,g_D})^2:=\int^{t_i}_{t_{i-1}}\frac{\sigma_0^2}{ h_{E_D}}\|\hat{\mathcal{G}}_{D,h}-\hat{Y}_h\|^2_{L^2(E_D)}\,dt,\\
&(\eta^i_{y,E_D})^2:=\int^{t_i}_{t_{i-1}}\big[\frac{\sigma_0^2}{ \,h_{E_D}}\|\hat{Y}_h\|^2_{L^2(E_D)}+\frac{\sigma^2_0}{h_{E_D}}\|Y_h\|^2_{L^2(E_D)}\big]\,dt,\\
&(\eta^i_{z,E_D})^2:=\int^{t_i}_{t_{i-1}} \big[\frac{\sigma^2_0}{h_{E_D}}\| \tilde{Z}_h\|^2_{L^2({E_D})}+ \frac{\sigma_0^2}{h_{E_D}}\|Z_h\|^2_{L^2({E_D})}\big]\,dt,  
\end{align}
\item[(ii)] and, Neumann boundary edge residuals $\eta^i_{y,E_N} $,  $\eta^i_{z,E_N}$,  and $\eta^i_{q,E_N}$, for $i\in [1:N_T]$,  by 
\begin{align}
&(\eta^i_{y,E_N})^2:=\int^{t_i}_{t_{i-1}}h_{E_N}\|Q_h+\hat{\mathcal{G}}_{N,h}-\mathbf{n}_E\cdot \nabla \hat{Y}_h\|^2_{L^2(E_N)}\,dt,\label{4.12yen}\\
&(\eta^i_{z,E_N})^2:= \int^{t_i}_{t_{i-1}} h_{E_N} \|\hat{\mathcal{R}}_{N,h}- \mathbf{n}_{E_N}\cdot \nabla \tilde{Z}_h\|^2_{L^2(E_N)}\,dt, \label{4.13en}\\
&(\eta^i_{q,E_N})^2:= \int^{t_i}_{t_{i-1}} h_{E_N}^2 \|\mathbf{n}_{E_N}\cdot \nabla(\alpha \left(Q_h-\hat{q}_{d,h}\right)+\tilde{Z}_h)\|^2_{L^2(E_N)}\,dt, \label{4.14444qen}
\end{align}
\end{subequations}
\end{itemize}
\item Moreover, data oscillations in the error analysis are represented by
\begin{subequations}\label{dataest4.17}
\begin{align}
&\varTheta_y^2~:=~\sum_{i=1}^{N_T}\int_{t_{i-1}}^{t_i}\Big[\sum_{K\in \mathscr{T}_h^i}\underbrace{h_K^2\big(\|(a_{0,h}-a_0)\hat{Y}_h\|^2_{L^2(K)}+\|f-\hat{f}_h\|^2_{L^2(K)}\big)}_{(\varTheta_{y,K}^i)^2}\nonumber\\
&\hspace{1.5cm}+\sum_{E\in \mathcal{E}_{N,h}}\underbrace{h_E\|g_N-\hat{\mathcal{G}}_{N,h}\|^2_{L^2(E)}}_{(\varTheta_{y,E_N}^i)^2}+\sum_{E\in \mathcal{E}^i_{D,h}}\underbrace{\frac{\sigma_0}{h_E}\|g_D-\mathcal{G}_{D,h}\|^2_{L^2(E)}}_{(\varTheta_{y,E_D}^i)^2}\Big]dt, \\  
&\varTheta_z^2~:=~\sum_{i=1}^{N_T}\int_{t_{i-1}}^{t_i}\Big[\sum_{K\in \mathscr{T}_h^i}\underbrace{h_K^2\big(\|(a_{0,h}-a_0)\tilde{Z}_h\|^2_{L^2(K)}+\|\hat{y}_{d,h}-y_d\|^2_{L^2(K)}\big)}_{(\varTheta_{z,K}^i)^2}\nonumber\\
&\hspace{1.5cm}+\sum_{E\in \mathcal{E}_{_N,h}^i}\underbrace{h_E\|r_{_N}-\hat{\mathcal{R}}_{N,h}\|^2_{L^2(E)}}_{(\varTheta_{z,E_N}^i)^2}\Big]dt,  \displaybreak[0]\\
&\varTheta_q^2 ~:=~ \sum_{i=1}^{N_T}\int_{t_{i-1}}^{t_i}\sum_{E\in \mathcal{E}_{N,h}^i}\Big[\underbrace{\|\alpha (q_d-\hat{q}_{d,h})\|^2_{L^2(E)}+\|q_a-\hat{q}_{a,h}\|^2_{L^2(E)}+\|q_b-\hat{q}_{b,h}\|^2_{L^2(E)}}_{(\varTheta^i_{q,E_N})^2} \Big]dt.\label{4.17qen}
\end{align}
\end{subequations}
\item We define the temporal error estimators for the state and adjoint-state variables as $\varTheta_{y,T}$ and $\varTheta_{z,T}$ respectively, which are expressed as 
\begin{subequations}\label{tempest4.18}
\begin{align}
\varTheta_{y,T}^2 ~:=~\sum_{i=1}^{N_T}\int_{t_{i-1}}^{t_i}|\|Y_h-\hat{Y_h}|\|^2_{\mathscr{E}}\, dt, \quad \text{and} \quad
\varTheta_{z,T}^2~:=~\sum_{i=1}^{N_T}\int_{t_{i-1}}^{t_i}|\|Z_h-\tilde{Z_h}|\|^2_{\mathscr{E}}\, dt,
\end{align}
\end{subequations}
respectively. Further, we define $$\Xi_{T_{yz}}:=\big(\varTheta_{y,T}^2+\varTheta_{z,T}^2 \big)^{1/2}.$$
\item Ultimately, we introduce the global error indicators for the state, adjoint-state, and control variables, defined as
\begin{subequations}\label{sumofest4.19}
	\begin{align}
		&\eta_y^2~:=~\sum_{K\in \mathscr{T}_h^i}(\eta_{y_0,K})^2+ \sum_{i=1}^{N_T}\Big[\sum_{K\in\mathscr{T}_h^i} (\eta_{y,K}^i)^2
		+\sum_{E\in \mathcal{E}^i_{N,h}} (\eta^i_{y,E_N})^2+\sum_{E\in \mathcal{E}^i_{D,h}}(\eta^i_{y,g_D})^2\nonumber\\
        &\hspace{1cm}+\sum_{E\in \mathcal{E}^i_{0,h}}(\eta^i_{y,E_0})^2+\sum_{E\in \mathcal{E}^i_{0,h}}(\tilde{\eta}^i_{y,E_0})^2+\sum_{E\in \mathcal{E}^i_{D,h}}(\eta^i_{y,E_D})^2\Big],\\
		&\eta^2_z~:=~\sum_{i=1}^{N_T}\Big[\sum_{K\in \mathscr{T}_h^i} (\eta^i_{z,K})^2+\sum_{E\in \mathcal{E}_{N,h}^i }(\eta^i_{z,E})^2+\sum_{E\in \mathcal{E}_{0,h}^i}(\eta^i_{z,E})^2+\sum_{E\in \mathcal{E}_{D,h}^i }(\eta^i_{z,E})^2
		\nonumber\\
        &\hspace{1cm}+\sum_{E\in \mathcal{E}_{0,h}^i }(\tilde{\eta}_{z,E}^i)^2\Big],\\ 
		&\eta^2_q~:=~ \sum_{i=1}^{N_T}\sum_{E_N\in \mathcal{E}_{N,h}^i}(\eta^i_{q,E_N})^2,\label{4.19cq}
	\end{align}
\end{subequations}
where $\eta_{y_0,K}$ denotes the initial data error estimator and defined as
\begin{align}
\eta_{y_0,K}~:=~\|y_0-y_{0,h}\|_{L^2(K)}. \hspace{2cm}
\end{align}
\end{enumerate}
Moreover, the sum of the residual error estimators for the SIPG discretization of the PBCP is given by
\begin{eqnarray}
\Upsilon_{yzq}~:=~ (\eta_{y}^2+\eta_z^2+\eta_q^2)^{1/2}, \label{upsilon} \hspace{0.5cm}
\end{eqnarray}
and, $\varTheta$ related to data oscillations is defined by
\begin{eqnarray}
\varTheta_{yzq}~:=~ (\varTheta^2_y+\varTheta^2_z+\varTheta^2_q)^{1/2}.\label{vartheta} \hspace{0.3cm}
\end{eqnarray}
\subsection{Reliable-type A posteriori Error Estimation}\label{section4444}
This section focuses on deriving reliability-type a posteriori error estimates for the  OCP governed by equations \eqref{contfunc}--\eqref{contspace}. 

To achieve this, we begin by introducing intermediate problems for the state and adjoint-state variables.
For given control variable $q^* \in \mathpzc{Q}_{Nad}$, to determine a pair $(y(q^*), z(q^*))\in \mathpzc{Y}\times \mathscr{Z}$ that satisfies the following control system
\begin{eqnarray}
\big(\frac{\partial y(q^*)}{\partial t}, \phi \big)+a(y(q^*), \phi)&=&(f,  \phi)+\left(q^*+g_{_N}, \phi \right)_{L^2(\Gamma_{_N})}  \quad \forall \phi \in \mathpzc{V},\label{intstate} \\
y(q^*)(x,0)&=&y_0(x),\label{intinitial}\\
-\big(\frac{\partial z(q^*)}{\partial t}, \psi \big)+a(\psi, z(q^*))&=&\left(y(q^*)-y_{d}, \psi\right)+\left(r_{_N}, \psi \right)_{L^2(\Gamma_{_N})} \quad \forall \psi \in \mathpzc{V}, \label{intadjoint-state}\\
z(q^*)(x,T)&=&0.\label{intfinal}
\end{eqnarray}
To derive the error equations for the state and adjoint-state variables, we subtract the intermediate state and adjoint-state equations \eqref{intstate} and \eqref{intadjoint-state} with $q^*=Q_h$ from the continuous state and adjoint-state \eqref{weakformstate} and \eqref{weakformadjoint-state}, respectively. This yields
\begin{align}
\Big(\frac{\partial (y-y(Q_h))}{\partial t}, \phi \Big)+a(y-y(Q_h), \phi)&=\left(q-Q_h, \phi \right)_{L^2(\Gamma_{_N})},  \quad \forall \phi \in \mathpzc{V}, \label{erroreqstate}\\
-\Big(\frac{\partial (z-z(Q_h))}{\partial t}, \psi \Big)+a(\psi, z-z(Q_h))&=(y-y(Q_h), \psi), \quad \forall \psi \in \mathpzc{V}.\label{erroreqadjoint-state} 
\end{align}
We do the integration over $[0,T]$ by setting $\phi= y-y(Q_h)$ and $\psi=z-z(Q_h)$ in \eqref{erroreqstate} and \eqref{erroreqadjoint-state}. Then we use Lemma \ref{lemma3.1} and the trace inequality \eqref{traceinq} to get
\begin{align}
\frac{1}{2}\|(y-y(Q_h))(T)\|^2+c_a\int_0^T|\|y-y(Q_h)|\|^2_{\mathscr{E}}\, dt&\leq\|q-Q_h\|_{L^2(I;L^2(\Gamma_{_N}))} \|y-y(Q_h)\|_{L^2(I;L^2(\Gamma_{_N}))}\nonumber\\ 
&\leq c_0c_{tr}\|q-Q_h\|_{L^2(I;L^2(\Gamma_{_N}))} |\|y-y(Q_h)|\|_{L^2(I;\mathscr{E})}\nonumber\\ 
|\|y-y(Q_h)|\|_{L^2(I; \mathscr{E})}&\leq \frac{c_0c_{tr}}{c_a}\|q-Q_h\|_{L^2(I;L^2(\Gamma_{_N}))}, \label{statebound}
\end{align}
where $c_0$ is defined as the minimum of $a_0$ and its reciprocal $a_0^{-1}$. Likewise,
\begin{align}
\frac{1}{2}\|(z-z(Q_h))(0)\|^2+c_a\int_0^T|\|z-z(Q_h)|\|^2_{\mathscr{E}}\,dt&\leq\|y-y(Q_h)\|_{L^2(I;L^2(\Omega))}\|z-z(Q_h)\|_{L^2(I;L^2(\Omega))}\nonumber\\
|\|z-z(Q_h)|\|_{L^2(I; \mathscr{E} )}&\leq |\|y-y(Q_h)|\|_{L^2(I; \mathscr{E})}.
\label{adjoint-statebound} 
\end{align} 
The subsequent lemma establishes a reliable-type a posteriori error estimate.
\begin{lemma}\label{lm4.1int} 
Let $(y,z,q)$ and $(Y_h, Z_h,Q_h)$ be the solutions of \eqref{weakformstate}--\eqref{weakformcontrol} and \eqref{redisoptstate12}--\eqref{redisfirstoptcond12}, respectively, and the co-control  variable $\mu$ and the discrete co-control variable $\mu_h$ as defined in \eqref{optmallitycon1} and \eqref{redisopt11}, respectively. Further, we assume that, for all $i\in[1:N_T]$, $\mathpzc{Q}^i_{Nad,h} \subset \mathpzc{Q}_{Nad,h}$, $\big(\alpha \left(Q_h-\hat{q}_{d,h}\right)+\tilde{Z}_h\big)|_{E\in \mathcal{E}^i_{N,h}} \in H^1(E)$ and then there is a positive constant $C_{4,1}$ and $\varphi_h \in \mathpzc{Q}^i_{Nad,h},\, i \in [1:N_T]$ such that 
\begin{align}
&\int_0^T\big|\big(\alpha \left(Q_h-\hat{q}_{d,h}\right)+\tilde{Z}_h, \varphi_h-Q_h\big)_{L^2(\Gamma_{_N})}|\,dt\nonumber\\
&\hspace{1cm}\leq~C_{4,1} \int_0^T \sum_{E \in \mathcal{E}^i_{N,h}} h_E \|\mathbf{n}_E\cdot \nabla(\alpha \left(Q_h-\hat{q}_{d,h}\right)+\tilde{Z}_h)\|_{L^2(E)}\|q-Q_h\|_{L^2(E)}\,dt.\label{assmption11}
\end{align}
Then, we have
\begin{align}
&|\|y-Y_h|\|_{L^2(I;\mathscr{E})}+|\|z-Z_h|\|_{L^2(I;\mathscr{E})}+\|q-Q_h\|_{L^2(I;L^2(\Gamma_{_N}))}+\|\mu-\hat{\mu}_h\|_{L^2(I;L^2(\Gamma_{_N}))}\nonumber\\ & \hspace{1.3cm} \leq C_{4,4}\,\eta_q+C_{4,5}\, \|\alpha(q_d-\hat{q}_{d,h})\|_{L^2(I;L^2(\Gamma_{_N}))}+C_{4,6}\,\|\nabla(Z_h-\tilde{Z}_h)\|_{L^2(I;L^2(\Omega))}\nonumber\\
&\hspace{1.3cm}+C_{4,7}\,|\|z(Q_h)-Z_h|\|_{L^2(I; \mathscr{E})}
+C_{4,8}\,|\|y(Q_h)-Y_h|\|_{L^2(I;\mathscr{E})}.
\end{align}
where $C_{4,j},\, j=4,\,5,\ldots,\,8$ are positive constants, and the pair $(y(Q_h),z(Q_h))\in \mathpzc{Y}\times \mathscr{Z}$ is a solution of the problem \eqref{intstate}--\eqref{intfinal} with $q^*=Q_h$.
\end{lemma}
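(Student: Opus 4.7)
The strategy is to reduce the four-term sum on the left to a bound on $\|q-Q_h\|_{L^2(I;L^2(\Gamma_{_N}))}$ and then close the estimate via the hypothesis \eqref{assmption11}. Introducing the intermediate pair $(y(Q_h),z(Q_h))$ solving \eqref{intstate}--\eqref{intfinal} and applying the triangle inequality, I split
\begin{align*}
|\|y-Y_h|\|_{L^2(I;\mathscr{E})}&\leq |\|y-y(Q_h)|\|_{L^2(I;\mathscr{E})}+|\|y(Q_h)-Y_h|\|_{L^2(I;\mathscr{E})},\\
|\|z-Z_h|\|_{L^2(I;\mathscr{E})}&\leq |\|z-z(Q_h)|\|_{L^2(I;\mathscr{E})}+|\|z(Q_h)-Z_h|\|_{L^2(I;\mathscr{E})}.
\end{align*}
The discrete-intermediate terms $|\|y(Q_h)-Y_h|\|$ and $|\|z(Q_h)-Z_h|\|$ already appear on the right of the asserted bound, while the continuous-intermediate parts are controlled by $\|q-Q_h\|_{L^2(I;L^2(\Gamma_{_N}))}$ through \eqref{statebound}--\eqref{adjoint-statebound}.

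To bound $\|q-Q_h\|_{L^2(I;L^2(\Gamma_{_N}))}$ I combine the continuous variational inequality \eqref{weakformcontrol} (tested with $\varphi=Q_h$, admissible because $\mathpzc{Q}^i_{Nad,h}\subset \mathpzc{Q}_{Nad}$) with the discrete variational inequality \eqref{redisfirstoptcond12} (tested with the element $\varphi_h\in \mathpzc{Q}^i_{Nad,h}$ supplied by \eqref{assmption11}). Using the algebraic identity
\[\alpha(q-Q_h)=[\alpha(q-q_d)+z]-[\alpha(Q_h-\hat{q}_{d,h})+\tilde{Z}_h]+\alpha(q_d-\hat{q}_{d,h})-(z-\tilde{Z}_h),\]
pairing in $L^2(\Gamma_{_N})$ with $q-Q_h$ and integrating in time, the coercive term $\alpha\|q-Q_h\|^2_{L^2(I;L^2(\Gamma_{_N}))}$ emerges on the left; the pairing of the first bracket with $q-Q_h$ is nonpositive by \eqref{weakformcontrol} and may be dropped. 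The second bracket, after the decomposition $Q_h-q=(Q_h-\varphi_h)+(\varphi_h-q)$ and the cancellation of the nonpositive piece produced by \eqref{redisfirstoptcond12}, leaves the residual pairing $([\alpha(Q_h-\hat{q}_{d,h})+\tilde{Z}_h],\varphi_h-Q_h)$, to which the hypothesis \eqref{assmption11} combined with edgewise Cauchy--Schwarz against the definition \eqref{4.14444qen} delivers the bound $C_{4,1}\,\eta_q\,\|q-Q_h\|_{L^2(I;L^2(\Gamma_{_N}))}$. The pairing $(\alpha(q_d-\hat{q}_{d,h}),q-Q_h)$ is handled by Cauchy--Schwarz. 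For $(z-\tilde{Z}_h,q-Q_h)$, I split $z-\tilde{Z}_h=(z-z(Q_h))+(z(Q_h)-Z_h)+(Z_h-\tilde{Z}_h)$: the first summand is controlled by the trace inequality \eqref{traceinq} combined with \eqref{adjoint-statebound} and \eqref{statebound}; the second is the intermediate adjoint error already on the right; the third yields $\|\nabla(Z_h-\tilde{Z}_h)\|_{L^2(I;L^2(\Omega))}$ after a trace and discrete Friedrichs-type argument (valid since $Z_h-\tilde{Z}_h$ vanishes weakly on $\Gamma_D$). Young's inequality then absorbs every $\|q-Q_h\|$ factor back into the left-hand side, yielding the required bound.

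Finally, the co-control error $\mu-\hat{\mu}_h$ is handled pointwise a.e.\ on $\Gamma_{_N}\times I$ via the identity $\mu-\hat{\mu}_h=-\alpha(q-Q_h)+\alpha(q_d-\hat{q}_{d,h})-(z-\tilde{Z}_h)$, a direct consequence of \eqref{optmallitycon1} and \eqref{redisopt11}, since each summand is already under control. The principal difficulty I anticipate is the central step of summing the two variational inequalities: one must organize the algebra so that the coercive $-\alpha\|q-Q_h\|^2$ is extracted cleanly, match the residual pairing exactly to the form of \eqref{assmption11} so that only $\eta_q$ (and no hidden Lagrange-multiplier contributions coming from the complementarity relations \eqref{discompcondi111}--\eqref{discompcondi222}) appears, and ensure the residual factor $\|q-Q_h\|$ can indeed be absorbed by Young's inequality with a coefficient strictly smaller than $\alpha$. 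The rest is a standard chain of trace, Cauchy--Schwarz and Young inequalities.
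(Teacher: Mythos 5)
Your overall route coincides with the paper's: introduce the intermediate pair $(y(Q_h),z(Q_h))$, split by the triangle inequality and use \eqref{statebound}--\eqref{adjoint-statebound}, combine the continuous variational inequality (with $\varphi=Q_h$) and the discrete one together with hypothesis \eqref{assmption11} to extract $\alpha\|q-Q_h\|^2_{L^2(I;L^2(\Gamma_{_N}))}$, and treat the co-control through the identity coming from \eqref{optmallitycon1} and \eqref{redisopt11}. The terms you call $\mathcal{I}_1$-type and the residual pairing are handled exactly as in the paper (the paper is equally loose about whether \eqref{assmption11} is applied with $\varphi_h-Q_h$ or $\varphi_h-q$, so that is not held against you).

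There is, however, one genuine gap: your treatment of the summand $(z-z(Q_h),\,q-Q_h)_{L^2(\Gamma_{_N})}$. Bounding it by the trace inequality together with \eqref{adjoint-statebound} and \eqref{statebound} produces a term of the form $c\,\|q-Q_h\|^2_{L^2(I;L^2(\Gamma_{_N}))}$ in which \emph{both} factors are already the control error, so there is no free Young parameter to tune; absorption into the left-hand side $\alpha\|q-Q_h\|^2$ would require the fixed constant $c$ (built from $c_{tr}$, $c_0$, $c_a$, independent of $\alpha$) to be strictly smaller than $\alpha$, which is not guaranteed — the lemma is claimed for every $\alpha>0$, and for small $\alpha$ your estimate cannot be closed. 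The paper avoids this entirely by a duality/sign argument: it writes this contribution as $\mathpzc{J}_2=\int_0^T(z(Q_h)-z,\,q-Q_h)_{L^2(\Gamma_{_N})}\,dt$, tests the state error equation \eqref{erroreqstate} with $\phi=z(Q_h)-z$, integrates by parts in time using $(y-y(Q_h))(0)=0=(z-z(Q_h))(T)$, and invokes the adjoint error equation \eqref{erroreqadjoint-state} with $\psi=y-y(Q_h)$ to obtain $\mathpzc{J}_2=-\int_0^T\|y-y(Q_h)\|^2_{L^2(\Omega)}\,dt\le 0$, so the term is simply discarded. Replacing your trace-plus-absorption step by this observation (note the sign in your decomposition is exactly the favorable one) closes the argument; the remaining steps of your proposal then agree with the paper's proof.
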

\begin{proof} The proof of this lemma proceeds in the following steps.	We first deduce the bounds for the state and adjoint errors. \smallskip

\noindent
{(i) \tt Bounds for the state and adjoint-state errors.}
Applying the energy norm definition \eqref{energynorm} and the triangle inequality, together with the bound \eqref{statebound}, yields
\begin{align}
|\|y-Y_h|\|_{L^2(I;\mathscr{E})}&\leq |\|y-y(Q_h)|\|_{L^2(I;\mathscr{E})} +|\|y(Q_h)-Y_h|\|_{L^2(I;\mathscr{E})},\nonumber\\
&\leq c_0c_{tr}c_a^{-1}\|q-Q_h\|_{L^2(I;L^2(\Gamma_{_N}))}+|\|y(Q_h)-Y_h|\|_{L^2(I;\mathscr{E})}, \label{stbound}
\end{align}
Employing the triangle inequality and the bound in Eq. \eqref{adjoint-statebound} with similar argument leads to the following adjoint-state error bound
\begin{align}
|\|z-Z_h|\|_{L^2(I; \mathscr{E})}&\leq |\|z-z(Q_h)|\|_{L^2(I; \mathscr{E})}+|\|z(Q_h)-Z_h|\|_{L^2(I; \mathscr{E})},\nonumber\\
&\leq c_0c_{tr}c_a^{-1}\|q-Q_h\|_{L^2(I;L^2(\Gamma_{_N}))}+|\|z(Q_h)-Z_h|\|_{L^2(I;\mathscr{E})}. \label{adjoint-statebound2}
\end{align}
Next we bound the co-control error. \smallskip

\noindent
{(ii) \tt Bound for the co-control error.}  From the inequalities \eqref{optmallitycon1} and \eqref{redisopt11} with \eqref{adjoint-statebound2}, we find that
\begin{align}
\|\mu -\hat{\mu}_h\|_{L^2(I;L^2{(\Gamma_{N})})}&\leq \alpha \|q-Q_h\|_{L^2(I;L^2{(\Gamma_{N})})}+ \alpha \|q_d-\hat{q}_{d,h}\|_{L^2(I;L^2{(\Gamma_{N})})}\nonumber\\&~~~+\|z-\tilde{Z}_h\|_{L^2(I;L^2(\Gamma_{_N}))}\nonumber \\
&\leq (\alpha+c_0c_{tr}c_a^{-1}) \|q-Q_h\|_{L^2(I;L^2{(\Gamma_{N})})}+ \alpha \|q_d-\hat{q}_{d,h}\|_{L^2(I;L^2{(\Gamma_{N})})}\nonumber\\&~~~+|\|z(Q_h)-Z_h|\|_{L^2(I; \mathscr{E})}+c_0c_{tr}\|\nabla(Z_h-\tilde{Z}_h)\|_{L^2(I; L^2(\Omega))}.\label{cocontbdd}
\end{align}
Next, we derive a bound for the control error.\smallskip

\noindent
{(iii) \tt Bound for the control error.} We observe that
\begin{align}
\alpha\int_0^T\|q-Q_h\|^2_{L^2(\Gamma_{_N})}dt&=\int^T_0(\alpha (q-Q_h), q-Q_h)_{L^2(\Gamma_{_N})}dt\nonumber\\&=\int^T_0(\alpha q, q-Q_h)_{L^2(\Gamma_{_N})}dt-\int^T_0(\alpha Q_h, q-Q_h)_{L^2(\Gamma_{_N})}dt.\nonumber
\end{align}
With the choice $\varphi=q-Q_h$, the first-order optimality condition \eqref{weakformcontrol} becomes
\begin{align}
&\alpha\int_0^T\|q-Q_h\|^2_{L^2(\Gamma_{_N})}dt~\leq ~\int^T_0(\alpha q_d-z, q-Q_h)_{L^2(\Gamma_{_N})}dt-\int^T_0(\alpha Q_h, q-Q_h)_{L^2(\Gamma_{_N})}dt\nonumber\\
&~~~\leq\int_0^T(\alpha(q_d-\hat{q}_{d,h}),q-Q_h)_{L^2(\Gamma_{_N})}dt+\int_0^T(\alpha(Q_h-\hat{q}_{d,h})+\tilde{Z}_h,\varphi_h-q)_{L^2(\Gamma_{_N})}dt\nonumber\\
&~~~-\int_0^T(\alpha(Q_h-\hat{q}_{d,h})+\tilde{Z}_h,\varphi_h-Q_h)_{L^2(\Gamma_{_N})}dt-\int_0^T(z-\tilde{Z}_h,q-Q_h)_{L^2(\Gamma_{_N})}dt.\label{4.14}
\end{align}
By virtue of inequality \eqref{redisfirstoptcond12}, we have $-\int_0^T(\alpha(Q_h-\hat{q}_{d,h})+\tilde{Z}_h,\varphi_h-Q_h)_{L^2(\Gamma_{_N})}dt\leq 0$. Consequently, the Eq. \eqref{4.14} becomes
\begin{align}
&\alpha\int_0^T\|q-Q_h\|^2_{L^2(\Gamma_{_N})}dt \leq \int_0^T(\alpha(q_d-\hat{q}_{d,h}),q-Q_h)_{L^2(\Gamma_{_N})}dt\nonumber\\ &~+\int_0^T(\alpha(Q_h-\hat{q}_{d,h})+\tilde{Z}_h,\varphi_h-q)_{L^2(\Gamma_{_N})}dt+\int_0^T(\tilde{Z}_h-z,q-Q_h)_{L^2(\Gamma_{_N})}dt\nonumber\\
&:=~\mathcal{I}_1+\mathcal{I}_2+\mathcal{I}_3.\label{4.15}
\end{align}
Our next step is to estimate the terms $\mathcal{I}_1,\, \mathcal{I}_2$ and $\mathcal{I}_3$. Invoking the well-known inequality $ab\leq \frac{1}{2\epsilon}a^2+\frac{\epsilon}{2}b^2$ for all $a,\,b\in \mathbb{R}^{+}$, we can estimate $\mathcal{I}_1$ as follows
\begin{align}
\mathcal{I}_1
&\leq \frac{1}{2\epsilon}\|\alpha(q_d-\hat{q}_{d,h})\|^2_{L^2(I;L^2(\Gamma_{_N}))}+\frac{\epsilon}{2}\|q-Q_{h}\|^2_{L^2(I;L^2(\Gamma_{_N}))}.\label{4.16} 
\end{align}
The term $\mathcal{I}_2$ is bounded by utilizing the inequality \eqref{assmption11}
\begin{align}
&\mathcal{I}_2
\leq \frac{1}{2 \epsilon}C^2_{4,1}\int_0^T \sum_{E_N\in \mathcal{E}_{N,h}^i}h_{E_N}^2 \|\mathbf{n}_E\cdot \nabla(\alpha \left(Q_h-\hat{q}_{d,h}\right)+\tilde{Z}_h)\|^2_{L^2(E)}dt\nonumber\\
&\hspace{0.9cm}+\frac{\epsilon }{2}\|q-Q_{h}\|^2_{L^2(I;L^2(\Gamma_{_N}))}.\label{4.17}
\end{align}
Lastly, we estimate the final term
\begin{eqnarray}
&&\mathcal{I}_3
=\int_0^T(\tilde{Z}_h-z(Q_h),q-Q_h)_{L^2(\Gamma_{_N})}dt
+\int_0^T(z(Q_h)-z,q-Q_h)_{L^2(\Gamma_{_N})}dt\nonumber\\&&\hspace{1.0cm}:=~\mathpzc{J}_1+\mathpzc{J}_2. \label{4.18}
\end{eqnarray}
We proceed to estimate the terms $\mathpzc{J}_1$ and $\mathpzc{J}_2$. To bound the term $\mathpzc{J}_1$, we employ the $\epsilon$-form of Young's inequality, which yields
\begin{align}
 \mathpzc{J}_1
\leq \frac{c_{tr}^2c_0^2}{2\epsilon}\|z(Q_h)-Z_h\|^2_{L^2(I; \mathscr{E})}+\frac{c_{tr}^2c_0^2}{2\epsilon}\|\nabla(Z_h-\tilde{Z}_h)\|^2_{L^2(I; L^2(\Omega))}+ \frac{\epsilon}{2}\|q-Q_h\|^2_{L^2(I;L^2(\Gamma_{_N}))}.\label{J_24.22}
\end{align}
Setting $\phi=z(Q_h)-z \in \mathpzc{V}$ in \eqref{erroreqstate}, and integrate  from $0$ to $T$, then integration by parts formula with $(y-y(Q_h))(0)=0=(z-z(Q_h))(T)$  yields the bound of the term $\mathpzc{J}_2$
\begin{eqnarray}
\mathpzc{J}_2
&=&\int_0^T\Big(-\frac{\partial(z(Q_h)-z)}{\partial t}, y-y(Q_h) \Big)dt+\int_0^Ta( z(Q_h)-z,y-y(Q_h))dt\nonumber\\&=&\int_0^T\left(y(Q_h)-y, y-y(Q_h)\right)dt~=~-\int_0^T\|y-y(Q_h)\|^2_{L^2(\Omega)}dt~\leq~0.\label{J1bdd}
\end{eqnarray}
Combine \eqref{4.15}--\eqref{J1bdd}, and set $C^2_{4,2}=\max\{1, C_{4,1}^2, c_{tr}^2 c_0^2, c_{tr}^2\}$ with $\epsilon=\alpha/3$, we find that
\begin{align}
&\int_0^T\|q-Q_h\|^2_{L^2(\Gamma_{_N})}dt~\leq~  \frac{9C^2_{4,2}}{2\alpha^2}\Big[\int_0^T \sum_{E_N\in \mathcal{E}_{N,h}^i}h_{E_N}^2 \|\mathbf{n}_E\cdot \nabla(\alpha \left(Q_h-\hat{q}_{d,h}\right)+\tilde{Z}_h)\|^2_{L^2(E_N)}dt\nonumber\\
&~~~~~~~+\|\alpha(q_d-\hat{q}_{d,h})\|^2_{L^2(I;L^2(\Gamma_{_N}))}+\|\nabla(Z_h-\tilde{Z}_h)\|^2_{L^2(I;L^2(\Omega))}+|\|z(Q_h)-Z_h|\|^2_{L^2(I; \mathscr{E})}\Big],\nonumber
\end{align}
and hence
\begin{align}
&\|q-Q_h\|_{L^2(I;L^2(\Gamma_{_N}))}\leq~  \frac{3C_{4,2}}{\sqrt{2} \alpha}\Big[\Big(\sum_{i=1}^{N_T}\int_{t_{i-1}}^{t_i}\sum_{E_N\in \mathcal{E}_{N,h}^i}(\eta^i_{q,E_N})^2dt\Big)^{1/2}\nonumber\\
&~+\|\alpha(q_d-\hat{q}_{d,h})\|_{L^2(I;L^2(\Gamma_{_N}))}+\|\nabla(Z_h-\tilde{Z}_h)\|_{L^2(I;L^2(\Omega))}+|\|z(Q_h)-Z_h|\|_{L^2(I; \mathscr{E})}\Big],\label{4.23bdd}
\end{align}
Combining the bounds \eqref{stbound}--\eqref{cocontbdd} with \eqref{4.23bdd}, and set $C_{4,3}=3c_0c_{tr}c^{-1}_a+\alpha+1,\,C_{4,4}=\frac{3C_{4,2}}{\sqrt{2} \alpha}C_{4,3},\;C_{4,5}=C_{4,4}+1,\;C_{4,6}=c_0c_{tr}+C_{4,4},\;C_{4,7}=C_{4,5}+1,\;C_{4,8}=1$,  this completes the rest of the proof.
\end{proof}
\begin{remark}
A crucial assumption underlying our reliable error estimation is that $\mathpzc{Q}^i_{Nad,h} \subset \mathpzc{Q}_{Nad}$ and \eqref{assmption11} hold, (cf., \cite[Lemma 3.1]{liuyan2001}).  If this inclusion property is not satisfied, some additional work is needed to establish the validity of our error estimate (cf., \cite[Remark 3.2]{liuyan2001}).
\end{remark}  
We now proceed to establish the reliable error bounds for the discrepancies between the continuous and discrete solutions of the state and adjoint-state variables, expressed in terms of estimators and data oscillations.
\begin{lemma}[\bf{Intermediate error bounds}] \label{interrbdlm4.3}
Let $(Y_h,Z_h,Q_h)$ and $(y(Q_h),z(Q_h))$ be the solutions of \eqref{redisoptstate12}--\eqref{redisfirstoptcond12} and \eqref{intstate}-- \eqref{intfinal} with $q^*=Q_h$, respectively. Then there exists two positive constants $c_{1,10}$ and $c_{1,12}$ such that the following estimates hold:
\begin{align}
&|\|z(Q_h)-Z_h|\|^2_{L^2(I; \mathscr{E})}\leq c_{1,10}\big[\varTheta_z^2+\varTheta_{z,T}^2+\eta_{z}^2 +\|y(Q_h)-\hat{Y}_h\|^2_{L^2(I;L^2(\Omega))}\big], \label{intadbound4.37}\\
&|\|y(Q_h)-Y_h|\|^2_{L^2(I; \mathscr{E})}\leq c_{1,12}\big[\varTheta_y^2 +\varTheta_{y,T}^2 +\eta_{y}^2+  \|q-Q_h\|^2_{L^2(I;L^2(\Gamma_{N}))}\big].\label{intadstatebound4.49}
\end{align}
\end{lemma}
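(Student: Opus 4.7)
The plan is to treat both inequalities as standard energy-type a~posteriori estimates for a linear parabolic SIPG discretisation combined with backward Euler in time; since $Q_h$ is frozen in the intermediate problem \eqref{intstate}--\eqref{intfinal}, the inequality constraints play no direct role in this lemma. The approach for each bound is the same: I would subtract the fully-discrete state (resp.\ adjoint) equation from its intermediate continuous counterpart, invoke coercivity of $a_h$ from Lemma \ref{lemma3.1} on the left-hand side, and expand the residual on the right via element-wise integration by parts into the volume, interior-edge, Dirichlet-edge and Neumann-edge contributions defined in \eqref{spacest4.4}--\eqref{dataest4.17}. A splitting $\phi=\phi_h+\psi$, with $\phi_h$ the best piecewise-constant approximation to the conforming part of the error, will let me import the approximation bounds \eqref{appinq4.24}--\eqref{appinq4.26} to absorb the $\psi$-dependent pieces into the energy norm $|\|\cdot|\|_{\mathscr E}$ with the correct mesh weights.

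For the state bound \eqref{intadstatebound4.49}, I would first use the triangle inequality to replace $Y_h$ by $\hat Y_h$; the difference $\hat Y_h-Y_h$ is precisely what the temporal oscillation $\varTheta_{y,T}$ in \eqref{tempest4.18} controls in the energy norm. Then I would test the intermediate continuous equation with the conforming part of $y(Q_h)-\hat Y_h$ and subtract the discrete equation tested with its piecewise-constant projection $\phi_h$. Since any $\phi\in\mathpzc V$ has vanishing interior jumps, only the $\hat Y_h$-sided jump and trace terms in $a_h$ survive, and elementwise integration by parts produces the volume residuals of $\eta_{y,K}^i$, the interior-edge residuals $\eta^i_{y,E_0},\,\tilde\eta^i_{y,E_0}$, the Dirichlet-edge residuals $\eta^i_{y,E_D},\,\eta^i_{y,g_D}$, and the Neumann-edge residual $\eta^i_{y,E_N}$ built from $Q_h+\hat{\mathcal G}_{N,h}-\mathbf n_E\cdot\nabla\hat Y_h$. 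Replacing the continuous data $(f,a_0,g_N,g_D)$ by their discrete counterparts generates the oscillation $\varTheta_y$, while handling the mass term at $t=0$ produces the initial contribution $\eta_{y_0,K}$ inside $\eta_y^2$. Cauchy--Schwarz, Young's inequality with a small $\epsilon$, and the estimates \eqref{appinq4.24}--\eqref{appinq4.26} then isolate a fraction of $c_a|\|y(Q_h)-\hat Y_h|\|^2_{L^2(I;\mathscr E)}$ on the left; the residual boundary mismatch not absorbed into $\varTheta_y$ is controlled by the slack term $\|q-Q_h\|^2_{L^2(I;L^2(\Gamma_N))}$, giving \eqref{intadstatebound4.49}.

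The adjoint bound \eqref{intadbound4.37} is obtained by running exactly the same argument backwards in time from the terminal condition $Z_h^N=0=z(Q_h)(T)$. The adjoint right-hand side $\bigl(y(Q_h)-y_d,\psi\bigr)+\bigl(r_N,\psi\bigr)_{L^2(\Gamma_N)}$ of \eqref{intadjoint-state} minus its discrete counterpart contributes the cross term $\bigl(y(Q_h)-\hat Y_h,\psi\bigr)$, which, after Cauchy--Schwarz and Young, produces the $\|y(Q_h)-\hat Y_h\|^2_{L^2(I;L^2(\Omega))}$ summand appearing in \eqref{intadbound4.37}; the data pieces $\|y_d-\hat y_{d,h}\|$ and $\|r_N-\hat{\mathcal R}_{N,h}\|$ enter $\varTheta_z$, the Neumann-edge residual provides $\eta^i_{z,E_N}$, and the temporal mismatch $\tilde Z_h-Z_h$ is measured by $\varTheta_{z,T}$. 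The main obstacle will not be a new idea but the careful bookkeeping required to pair each DG residual with the correct term in \eqref{spacest4.4}--\eqref{dataest4.17}: in particular, one must verify that the penalty scalings $\sigma_0/h_E$ appearing in the jump and trace parts of $a_h$ are matched by the $\tilde\eta$-type terms, and that the splitting $\phi=\phi_h+\psi$ interacts with the DG consistency and penalty terms so that no uncontrolled $h_E^{-1}$ blow-up survives. Once this accounting is completed, the constants $c_{1,10}$ and $c_{1,12}$ will collect the remaining $\alpha$-, $c_{tr}$-, $c_a$- and $c_0$-dependent factors.
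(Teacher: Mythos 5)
Your plan follows essentially the same route as the paper's proof: an energy argument with coercivity of $a_h$, elementwise integration by parts to expose the element, interior-edge, Dirichlet- and Neumann-edge residuals of \eqref{spacest4.4}--\eqref{4.14444qen}, data oscillations from replacing $(f,a_0,y_d,g_D,g_N,r_N)$ by their discrete counterparts, the piecewise-constant splitting with \eqref{appinq4.24}--\eqref{appinq4.26}, temporal terms $\hat Y_h-Y_h$, $\tilde Z_h-Z_h$ absorbed into $\varTheta_{y,T},\varTheta_{z,T}$, and Young's inequality plus kick-back after integrating and summing in time. The only ingredient you leave implicit under "bookkeeping" is how the paper actually tames the $h_E^{-1}$-weighted jump/trace couplings, namely the orthogonality $a_h(e_z,\tilde Z_h-\upsilon)=0$ with a conforming $\upsilon$ whose distance to $\tilde Z_h$ is controlled by the jump terms (via \cite[Theorem 2.1]{pascal2007}); this is a standard device and does not change the substance of your approach.
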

\begin{proof} We first proceed with the intermediate adjoint-state error bounds. \smallskip

\noindent
\textbf{(i)}{ \tt Bound for intermediate adjoint-state error.} 
Letting $e_z=z(Q_h)-Z_h$ and choosing $\phi \in \mathpzc{V}$, we then compute
\begin{align}
-(e_{z,t}, \phi)+a_h(e_z, \phi)&=-(z_t(Q_h), \phi)+a_h(z(Q_h), \phi)-[-(Z_{h,t},\phi)+a_h(\tilde{Z_h}, \phi)+a_h(Z_h-\tilde{Z_h}, \phi)]\nonumber\\
&=~(y(Q_h)-\hat{Y}_h,\phi)+(\hat{y}_{d,h}-y_d,\phi)+(r_{_N}-\hat{\mathcal{R}}_{N,h}, \phi)+ a_h(\tilde{Z_h}-Z_h, \phi)\nonumber\\&~~~~+[(\hat{Y}_h-\hat{y}_{d,h}+Z_{h,t},\phi-\phi_h)-a_h(\tilde{Z_h}, \phi-\phi_h)+(\hat{\mathcal{R}}_{N,h}, \phi-\phi_h)_{L^2(\Gamma_{_N})}].\nonumber
\end{align}
Setting $\phi=e_z$ and $\psi=\phi-\phi_h$, we obtain
\begin{align}
&-(e_{z,t}, e_z)+a_h(e_z, e_z)~=~(y(Q_h)-\hat{Y}_h,e_z)+(\hat{y}_{d,h}-y_d,e_z)+(r_{_N}-\hat{\mathcal{R}}_{N,h}, e_z)_{L^2(\Gamma_{_N})}\nonumber\\&~~~~~~~~~~~~~~+ a_h(\tilde{Z_h}-Z_h, e_z)+[(\hat{Y}_h-\hat{y}_{d,h}+Z_{h,t},\psi)-a_h(\tilde{Z_h}, \psi)+(\hat{\mathcal{R}}_{N,h}, \psi)_{L^2(\Gamma_{_N})}],\nonumber
\end{align}
and hence 
\begin{align}
&-\frac{1}{2}\frac{d}{dt}\|e_z\|^2+c_a\||e_z\||^2_{\mathscr{E}}\leq(y(Q_h)-\hat{Y}_h,e_z)+(\hat{y}_{d,h}-y_d,e_z)+(r_{_N}-\hat{\mathcal{R}}_{N,h}, e_z)_{L^2(\Gamma_{_N})}\nonumber\\&~~~~~~~+ a_h(\tilde{Z_h}-Z_h, e_z)+[(\hat{Y}_h-\hat{y}_{d,h}+Z_{h,t},\psi)+(\hat{\mathcal{R}}_{N,h}, \psi)_{L^2(\Gamma_{_N})}-\sum_{K\in \mathscr{T}_h^i}\{(\nabla \tilde{Z}_h, \nabla \psi)_K\nonumber\\&~~~~~~~+(a_0\tilde{Z}_h,\psi)_K\}+\sum_{E\in \mathcal{E}_{0,h}^i \cup \mathcal{E}_{D,h}^i }\{(\smean{\nabla \psi},\sjump{\tilde{Z}_h})_E+(\smean{\nabla \tilde{Z}_h}, \sjump{\psi})_E-\frac{\sigma_0}{h_E}(\sjump{\psi},\sjump{\tilde{Z}_h})_E\}].\label{4.26eq}
\end{align}
We now simplify the term $\sum_{K\in \mathscr{T}_h^i}(\nabla \tilde{Z}_h, \nabla \psi)_K$ using integration by parts formula, which gives
\begin{eqnarray}
\sum_{K\in \mathscr{T}_h^i}(\nabla \tilde{Z}_h, \nabla \psi)_K&=&\sum_{K\in \mathscr{T}_h^i}(-\Delta \tilde{Z}_h, \psi)_K+\sum_{E\in \mathcal{E}_{0,h}^i}\{(\smean{\nabla \tilde{Z}_h},\sjump{\psi})_E+(\sjump{ \nabla \tilde{Z}_h},\smean{\psi})_E\}\nonumber\\&&+\sum_{E\in \mathcal{E}_{D,h}^i}(\mathbf{n}_{E} \cdot \nabla \tilde{Z}_h,\psi)_E+\sum_{E\in \mathcal{E}_{N,h}^i}(\mathbf{n}_{E} \cdot \nabla \tilde{Z}_h,\psi)_E.\label{4.27eq}
\end{eqnarray}
Substitution of \eqref{4.27eq} into \eqref{4.26eq} reveals that
\begin{align}
&-\frac{1}{2}\frac{d}{dt}\|e_z\|^2+c_a|\|e_z|\|^2_{\mathscr{E}}\leq~(y(Q_h)-\hat{Y}_h,e_z)+(\hat{y}_{d,h}-y_d,e_z)+(r_{_N}-\hat{\mathcal{R}}_{N,h}, e_z)_{L^2(\Gamma_{_N})}\nonumber\\&\hspace{1cm}+ a_h(\tilde{Z_h}-Z_h, e_z)+\sum_{K\in \mathscr{T}_h^i}[(\hat{Y}_h-\hat{y}_{d,h}+Z_{h,t}+\Delta \tilde{Z}_h-a_{0,h}\tilde{Z}_h,\psi)_K+((a_{0,h}-a_0)\tilde{Z}_h,\psi)_K]
\nonumber\\&\hspace{1cm}+\sum_{E\in \mathcal{E}_{N,h}^i }(\hat{\mathcal{R}}_{N,h}-\mathbf{n}_{E} \cdot \nabla \tilde{Z}_h, \psi)_{E}+\sum_{E\in \mathcal{E}_{D,h}^i }(\mathbf{n}_{E} \cdot \nabla \psi-\frac{\sigma_0}{h_E}\psi, \tilde{Z}_h)_E\nonumber\\&\hspace{1cm}+\sum_{E\in \mathcal{E}_{0,h}^i}[(\smean{\nabla \psi},\sjump{\tilde{Z}_h})_E-(\sjump{\nabla \tilde{Z}_h}, \smean{\psi})_E-\frac{\sigma_0}{h_E}(\sjump{\psi},\sjump{\tilde{Z}_h})_E].\label{4.28eq}
\end{align}
We know that for any $\upsilon \in \mathpzc{V}_h \cap H^1(\Omega)$ that vanishes on $\Gamma_D$, then the orthogonality leads to
\begin{align}
&0=a_h(e_z, \tilde{Z}_h-\upsilon)
~=~\sum_{K\in \mathscr{T}_h^i}[(\nabla e_z,\nabla (\tilde{Z}_h-\upsilon))_K+(a_0e_z,\tilde{Z}_h-\upsilon)_K]\nonumber\\&\hspace{0.5cm}-\sum_{E\in \mathcal{E}_{0,h}^i}[(\smean{\nabla (\tilde{Z}_h-\upsilon)}, \sjump{ e_z})_E+(\smean{\nabla e_z}, \sjump{\tilde{Z}_h})_E-\frac{\sigma_0}{h_E}(\sjump{ e_z},\sjump{\tilde{Z}_h})_E]\nonumber\\&\hspace{0.5cm}-\sum_{E\in \mathcal{E}_{D,h}^i}[(\mathbf{n}_E\cdot\nabla(\tilde{Z}_h-\upsilon), e_z)_E+(\mathbf{n}_E\cdot\nabla e_z, \tilde{Z}_h)_E+\frac{\sigma_0}{h_E}(Z_h, \tilde{Z}_h)_E].\label{4.29eq}
\end{align}
Incorporating \eqref{4.29eq} in \eqref{4.28eq}, and substituting $\psi=e_z-\phi_h$, where $\phi_h$ is constant on $\mathscr{T}_h^i$, we obtain
\begin{align}
&-\frac{1}{2}\frac{d}{dz}\|e_z\|^2+c_a\|e_z\|^2_{\mathscr{E}}~\leq~(y(Q_h)-\hat{Y}_h,e_z)+(\hat{y}_{d,h}-y_d,e_z)+(r_{_N}-\hat{\mathcal{R}}_{N,h}, e_z)_{L^2(\Gamma_{_N})}\nonumber\\&\hspace{0.7cm}+ a_h(\tilde{Z_h}-Z_h, e_z)+\sum_{K\in \mathscr{T}_h^i}[(\hat{Y}_h-\hat{y}_{d,h}+Z_{h,t}+\Delta \tilde{Z}_h-a_{0,h}\tilde{Z}_h,\psi)_K+((a_{0,h}-a_0)\tilde{Z}_h,\psi)_K]
\nonumber\\
&\hspace{0.7cm}+\sum_{E\in \mathcal{E}_{N,h}^i }(\hat{\mathcal{R}}_{N,h}-\mathbf{n}_{E} \cdot \nabla \tilde{Z}_h, \psi)_{E}-\sum_{E\in \mathcal{E}_{D,h}^i }\frac{\sigma_0}{h_E}(\psi, \tilde{Z}_h)_E-\sum_{E\in \mathcal{E}_{0,h}^i}[\frac{\sigma_0}{h_E}(\sjump{\psi},\sjump{\tilde{Z}_h})_E\nonumber\\&\hspace{0.7cm}+(\sjump{\nabla \tilde{Z}_h}, \smean{\psi})_E]+\sum_{K\in \mathscr{T}_h^i}[(\nabla e_z,\nabla (\tilde{Z}_h-\upsilon))_K+(a_0e_z,\tilde{Z}_h-\upsilon)_K]\nonumber\\
&\hspace{0.7cm}-\sum_{E\in \mathcal{E}_{0,h}^i}[(\smean{\nabla (\tilde{Z}_h-\upsilon)}, \sjump{ e_z})_E+\frac{\sigma_0}{h_E}(\sjump{Z_h},\sjump{\tilde{Z}_h})_E]-\sum_{E\in \mathcal{E}_{D,h}^i}[(\mathbf{n}_E\cdot\nabla(\tilde{Z}_h-\upsilon), e_z)_E\nonumber\\
&\hspace{0.7cm}+\frac{\sigma_0}{h_E}(Z_h, \tilde{Z}_h)_E].\label{4.30eq}
\end{align}
After integrating \eqref{4.30eq} from $t_{i-1}$ to $t_i$ with respect to time, and summing over all time steps from $1$ to $N_T$, with $e_z(t_N)=0$, we arrive at
\begin{align}
&\frac{1}{2}\|e_z(0)\|^2+c_a\sum_{i=1}^{N_T} \int_{t_{i-1}}^{t_i}\|e_z\|^2_{\mathscr{E}}dt~\leq\sum_{i=1}^{N_T}\int_{t_{i-1}}^{t_i}(y(Q_h)-\hat{Y}_h,e_z)dt+\sum_{i=1}^{N_T}\int_{t_{i-1}}^{t_i}(\hat{y}_{d,h}-y_d,e_z)dt\nonumber\\&~~~~+\sum_{i=1}^{N_T}\int_{t_{i-1}}^{t_i}(r_{_N}-\hat{\mathcal{R}}_{N,h}, e_z)_{L^2(\Gamma_{_N})}dt+ \sum_{i=1}^{N_T}\int_{t_{i-1}}^{t_i}a_h(\tilde{Z_h}-Z_h, e_z)dt\nonumber \displaybreak[0]\\
&~~~~+\sum_{i=1}^{N_T}\int_{t_{i-1}}^{t_i}\sum_{K\in \mathscr{T}_h^i}[(\hat{Y}_h-\hat{y}_{d,h}+Z_{h,t}+\Delta \tilde{Z}_h-a_{0,h}\tilde{Z}_h,\psi)_K+((a_{0,h}-a_0)\tilde{Z}_h,\psi)_K]dt
\nonumber   \\
&~~~~+\sum_{i=1}^{N_T}\int_{t_{i-1}}^{t_i}\sum_{E\in \mathcal{E}_{N,h}^i }(\hat{\mathcal{R}}_{N,h}-\mathbf{n}_{E} \cdot \nabla \tilde{Z}_h, \psi)_{E}dt-\sum_{i=1}^{N_T}\int_{t_{i-1}}^{t_i}\sum_{E\in \mathcal{E}_{D,h}^i }\frac{\sigma_0}{h_E}(\psi, \tilde{Z}_h)_Edt\nonumber\\&~~~~-\sum_{i=1}^{N_T}\int_{t_{i-1}}^{t_i}\sum_{E\in \mathcal{E}_{0,h}^i}[\frac{\sigma_0}{h_E}(\sjump{\psi},\sjump{\tilde{Z}_h})_E+(\sjump{\nabla \tilde{Z}_h}, \smean{\psi})_E]dt+\sum_{i=1}^{N_T}\int_{t_{i-1}}^{t_i}\sum_{K\in \mathscr{T}_h^i}[(\nabla e_z,\nabla (\tilde{Z}_h-\upsilon))_K\nonumber\\&~~~~+(a_0e_z,\tilde{Z}_h-\upsilon)_K]dt-\sum_{i=1}^{N_T}\int_{t_{i-1}}^{t_i}\sum_{E\in \mathcal{E}_{0,h}^i}(\smean{\nabla (\tilde{Z}_h-\upsilon)}, \sjump{ e_z})_Edt
\nonumber\\&~~~~-\sum_{i=1}^{N_T}\int_{t_{i-1}}^{t_i}\sum_{E\in \mathcal{E}_{D,h}^i}(\mathbf{n}_E\cdot\nabla(\tilde{Z}_h-\upsilon), e_z)_Edt-\sum_{i=1}^{N_T}\int_{t_{i-1}}^{t_i}\sum_{E\in \mathcal{E}_{0,h}^i}\frac{\sigma_0}{h_E}(\sjump{Z_h},\sjump{\tilde{Z}_h})_Edt
\nonumber\\&~~~~-\sum_{i=1}^{N_T}\int_{t_{i-1}}^{t_i}\sum_{E\in \mathcal{E}_{D,h}^i}\frac{\sigma_0}{h_E}(Z_h, \tilde{Z}_h)_Edt~
:=~\sum_{j=1}^{8}\mathpzc{B}_j.\label{4.31eq}
\end{align}
We now proceed to estimate each term $\mathpzc{B}_j,\,j=1,\,\ldots,8$, separately. For any $\epsilon>0$, then application of $\epsilon$-form of Young's inequality leads to the bound of  $\mathpzc{B}_1$ as
\begin{eqnarray}
\mathpzc{B}_1~\leq~
\frac{1}{2\epsilon_1}\|y(Q_h)-\hat{Y}_h\|^2_{L^2(I;L^2(\Omega))}+\frac{\epsilon_1}{2} |\|e_z|\|^2_{L^2(I;\mathscr{E})}. \nonumber
\end{eqnarray}
Analogously, we can bound the term $\mathpzc{B}_2$ as
\begin{eqnarray}
\mathpzc{B}_2~\leq~
\frac{1}{2\epsilon_2}\sum_{i=1}^{N_T}\int_{t_{i-1}}^{t_i}\|\hat{y}_{d,h}-y_d\|^2_{L^2(\Omega)}dt+\frac{\epsilon_2}{2}|\|e_z|\|^2_{L^2(I; \mathscr{E})}.\nonumber
\end{eqnarray}
Next we consider the boundary term $\mathpzc{B}_3$ and use of trace inequality leads to
\begin{eqnarray}
\mathpzc{B}_3~ 
\leq~ \frac{c^2_{tr}c_0^2}{2\epsilon_3}\sum_{i=1}^{N_T}\int_{t_{i-1}}^{t_i}\|r_{_N}-\hat{\mathcal{R}}_{N,h}\|^2_{L^2(\Gamma_{_N})}dt+\frac{\epsilon_3}{2} |\|e_z|\|^2_{L^2(I; \mathscr{E})}. \nonumber
\end{eqnarray}
Applying the continuity of the bi-linear form on the term $\mathpzc{B}_4$, we find that
\begin{eqnarray}
\mathpzc{B}_4 
~\leq~ \frac{2}{\epsilon_4}  \sum_{i=1}^{N_T}\int_{t_{i-1}}^{t_i}\|\tilde{Z_h}-Z_h\|_{\mathscr{E}}^2\,dt+\frac{\epsilon_4}{2} |\|e_z|\|^2_{L^2(I; \mathscr{E})}.\nonumber
\end{eqnarray}
Next, we focus on estimating the term $\mathpzc{B}_5$ involving $\psi$. Utilizing the $\epsilon$-form of Young's inequality and rearranging the terms, we derive
\begin{align}
\mathpzc{B}_5
\leq &~ \frac{1}{2\epsilon_5} \sum_{i=1}^{N_T}\int_{t_{i-1}}^{t_i}\Big[\sum_{K\in \mathscr{T}_h^i}h_K^2\big\{\|\hat{Y}_h-\hat{y}_{d,h}+Z_{h,t}+\Delta \tilde{Z}_h-a_{0,h}\tilde{Z}_h\|^2_{L^2(K)}+\|(a_{0,h}-a_0)\tilde{Z}_h\|^2_{L^2(K)}\big\}\Big]dt\nonumber\\
&~~+\frac{1}{2\epsilon_6}\sum_{i=1}^{N_T}\int_{t_{i-1}}^{t_i}\sum_{E\in \mathcal{E}_{N,h}^i }h_E\|\hat{\mathcal{R}}_{N,h}-\mathbf{n}_{E} \cdot \nabla \tilde{Z}_h\|^2_{L^2(E)}dt+\frac{1}{2\epsilon_7}\sum_{i=1}^{N_T}\int_{t_{i-1}}^{t_i}\sum_{E\in \mathcal{E}_{7D,h}^i }\frac{\sigma^2_0}{h_E}\| \tilde{Z}_h\|^2_{L^2(E)}dt\nonumber \displaybreak[0]\\
&~~+\frac{1}{2\epsilon_8}\sum_{i=1}^{N_T}\int_{t_{i-1}}^{t_i}\sum_{E\in \mathcal{E}_{0,h}^i}\frac{\sigma_0^2}{h_E}\|\sjump{\tilde{Z}_h}\|^2_{L^2(E)}dt+\frac{1}{2\epsilon_9}\sum_{i=1}^{N_T}\int_{t_{i-1}}^{t_i}\sum_{E\in \mathcal{E}_{0,h}^i}h_E\|\sjump{\nabla \tilde{Z}_h}\|^2_{L^2(E)}dt\nonumber\\&~~+\frac{\epsilon_5}{2} \sum_{i=1}^{N_T}\int_{t_{i-1}}^{t_i}\sum_{K\in \mathscr{T}_h^i}h_K^{-2}\|\psi\|^2_{L^2(K)} dt+\frac{\epsilon_6}{2} \sum_{i=1}^{N_T}\int_{t_{i-1}}^{t_i} \sum_{E\in \mathcal{E}_{N,h}^i }h_E^{-1}\|\psi\|^2_{L^2(E)}dt\nonumber\\&~~+\frac{\epsilon_7}{2} \sum_{i=1}^{N_T}\int_{t_{i-1}}^{t_i}\sum_{E\in \mathcal{E}_{D,h}^i }h_E^{-1}\|\psi\|^2_{L^2(E)}dt+\frac{\epsilon_8}{2} \sum_{i=1}^{N_T}\int_{t_{i-1}}^{t_i}\sum_{E\in \mathcal{E}_{0,h}^i}h_E^{-1}\|\sjump{\psi}\|^2_{L^2(E)}dt\nonumber\\&~~+ \frac{\epsilon_9}{2} \sum_{i=1}^{N_T}\int_{t_{i-1}}^{t_i}\sum_{E\in \mathcal{E}_{0,h}^i}  h^{-1}_E\|\smean{\psi}\|^2_{L^2(E)}dt.\nonumber
\end{align}
Applying the approximation results \eqref{appinq4.24}--\eqref{appinq4.26} with $e=e_z$, we obtain
\begin{align}
\mathpzc{B}_5&\leq~ \frac{1}{2 \epsilon_5} \sum_{i=1}^{N_T}\int_{t_{i-1}}^{t_i}\Big[\sum_{K\in \mathscr{T}_h^i}h_K^2\big\{\|\hat{Y}_h-\hat{y}_{d,h}+Z_{h,t}+\Delta \tilde{Z}_h-a_{0,h}\tilde{Z}_h\|^2_{L^2(K)}\nonumber\\
&~~+\|(a_{0,h}-a_0)\tilde{Z}_h\|^2_{L^2(K)}\big\}\Big]dt+\frac{1}{2 \epsilon_6}\sum_{i=1}^{N_T}\int_{t_{i-1}}^{t_i}\sum_{E\in \mathcal{E}_{N,h}^i}h_E\|\hat{\mathcal{R}}_{N,h}-\mathbf{n}_{E} \cdot \nabla \tilde{Z}_h\|^2_{L^2(E)}dt\nonumber\\&~~+\frac{1}{2\epsilon_7}\sum_{i=1}^{N_T}\int_{t_{i-1}}^{t_i}\sum_{E\in \mathcal{E}_{D,h}^i }\frac{\sigma_0^2}{h_E}\| \tilde{Z}_h\|^2_{L^2(E)}dt+\frac{1}{2\epsilon_8}\sum_{i=1}^{N_T}\int_{t_{i-1}}^{t_i}\sum_{E\in \mathcal{E}_{0,h}^i}\frac{\sigma_0^2}{h_E}\|\sjump{\tilde{Z}_h}\|^2_{L^2(E)}dt\nonumber\\&~~+\frac{1}{2\epsilon_9}\sum_{i=1}^{N_T}\int_{t_{i-1}}^{t_i}\sum_{E\in \mathcal{E}_{0,h}^i}h_E\|\sjump{\nabla \tilde{Z}_h}\|^2_{L^2(E)}dt+\frac{\epsilon}{2}C \sum_{i=1}^{N_T}\int_{t_{i-1}}^{t_i}\sum_{K\in \mathscr{T}_h^i}\|\nabla e_z\|^2_{L^2(K)} dt,\nonumber
\end{align}
with $\epsilon=\epsilon_5=\epsilon_6=\epsilon_7=\epsilon_8=\epsilon_9$. 
We proceed to examine the terms involving  $\tilde{Z}_h-\upsilon$, denoted as 
\begin{align}
\mathpzc{B}_6&\leq  \frac{\sigma_0^2}{2\epsilon_{10}}\sum_{i=1}^{N_T}\int_{t_{i-1}}^{t_i}\sum_{K\in \mathscr{T}_h^i}\|\nabla (\tilde{Z}_h-\upsilon)\|^2_{L^2(K)}dt+\frac{1}{2\epsilon_{11}} \sum_{i=1}^{N_T}\int_{t_{i-1}}^{t_i}\sum_{K\in \mathscr{T}_h^i}\|\tilde{Z}_h-\upsilon\|^2_{L^2(K)}]dt\nonumber\\&\hspace{0.5cm}+\frac{\sigma_0^2}{2\epsilon_{12}} \sum_{i=1}^{N_T}\int_{t_{i-1}}^{t_i}\sum_{E\in \mathcal{E}_{0,h}^i} h_E\|\smean{\nabla (\tilde{Z}_h-\upsilon)}\|^2_{L^2(E)}dt\nonumber \displaybreak[0]\\
&\hspace{0.5cm}+\frac{\sigma_0^2}{2\epsilon_{13}} \sum_{i=1}^{N_T}\int_{t_{i-1}}^{t_i}\sum_{E\in \mathcal{E}_{D,h}^i}h_E\|\mathbf{n}_E\cdot\nabla(\tilde{Z}_h-\upsilon)\|^2_{L^2(E)}dt+\frac{\epsilon_{10}}{2\sigma_0^2} \sum_{i=1}^{N_T}\int_{t_{i-1}}^{t_i}\sum_{K\in \mathscr{T}_h^i}\|\nabla e_z\|^2 dt\nonumber 
\\&\hspace{0.5cm}+\frac{\epsilon_{11}}{2}\sum_{i=1}^{N_T}\int_{t_{i-1}}^{t_i}\sum_{K\in \mathscr{T}_h^i}a_0\|e_z\|^2_{L^2(K)}dt+\frac{\epsilon_{12}}{2\sigma_0^2} \sum_{i=1}^{N_T}\int_{t_{i-1}}^{t_i}\sum_{E\in \mathcal{E}_{0,h}^i}h_E^{-1}\|\sjump{e_z}\|^2_{L^2(E)}dt\nonumber\\&\hspace{0.5cm}+\frac{\epsilon_{13}}{2\sigma_0^2}\sum_{i=1}^{N_T}\int_{t_{i-1}}^{t_i}\sum_{E\in \mathcal{E}_{D,h}^i}h_E^{-1}\|e_z\|^2_{L^2(E)}dt.\label{4.3355eq}
\end{align}
To bound the terms involving $\nabla(\tilde{Z}_h-\upsilon)$ in \eqref{4.3355eq}, we utilize the trace inequality and the inverse inequality. Furthermore,  invoking \cite[Theorem 2.1]{pascal2007} we have
\begin{align}
\mathpzc{B}_6 &\leq~ \frac{\tilde{C}}{2}\Big(\frac{1}{\epsilon_{10}}+\frac{1}{\epsilon_{12}}+\frac{1}{\epsilon_{13}}\Big) \sum_{i=1}^{N_T}\int_{t_{i-1}}^{t_i}\Big[\sum_{E\in \mathcal{E}_{0,h}^i}\frac{\sigma_0^2}{h_E}\|\sjump{\tilde{Z}_h}\|^2_{L^2(E)}+\sum_{E\in \mathcal{E}_{D,h}^i}\frac{\sigma_0^2}{h_E}\|\tilde{Z}_h\|^2_{L^2(E)}\Big]dt\nonumber\\
&\hspace{0.5cm}+ \frac{1}{2\epsilon_{11}}\tilde{C} \sum_{i=1}^{N_T}\int_{t_{i-1}}^{t_i}\Big[\sum_{E\in \mathcal{E}_{0,h}^i}h_E\|\sjump{\tilde{Z}_h}\|^2_{L^2(E)}+\sum_{E\in \mathcal{E}_{D,h}^i}h_E\|\tilde{Z}_h\|^2_{L^2(E)}\Big]dt\nonumber\\
&\hspace{0.5cm}+\frac{\epsilon_{10}}{2} \sum_{i=1}^{N_T}\int_{t_{i-1}}^{t_i}\sum_{K\in \mathscr{T}_h^i}\|\nabla e_z\|^2 dt+\frac{\epsilon_{11}}{2}\sum_{i=1}^{N_T}\int_{t_{i-1}}^{t_i}\sum_{K\in \mathscr{T}_h^i}a_0\|e_z\|^2_{L^2(K)}dt\nonumber\\
&\hspace{0.5cm}+\frac{\epsilon_{12}}{2\sigma_0^2}C \sum_{i=1}^{N_T}\int_{t_{i-1}}^{t_i}\sum_{K\in \mathscr{T}_h^i}\|\nabla e_z\|^2dt+\frac{\epsilon_{13}}{2\sigma_0^2}C\sum_{i=1}^{N_T}\int_{t_{i-1}}^{t_i}\sum_{K\in \mathscr{T}_h^i}\|\nabla e_z\|^2dt.\nonumber
\end{align}
An application of the Cauchy-Schwarz inequality yields the estimate for $\mathpzc{B}_7$ as
\begin{align}
\mathpzc{B}_7
&\leq~ \frac{1}{2\sigma_0} \sum_{i=1}^{N_T}\int_{t_{i-1}}^{t_i}\Big[\sum_{E\in \mathcal{E}_{0,h}^i}\frac{\sigma_0^2}{h_E}\|\sjump{Z_h}\|^2_{L^2(E)}+\sum_{E\in \mathcal{E}_{0,h}^i}\frac{\sigma_0^2}{h_E}\|\sjump{\tilde{Z}_h}\|^2_{L^2(E)}\Big]dt.\nonumber 
\end{align}
Likewise, we bound the term $\mathpzc{B}_8$ by
\begin{eqnarray}
\mathpzc{B}_8\leq\frac{1}{2\sigma_0} \sum_{i=1}^{N_T}\int_{t_{i-1}}^{t_i}\Big[\sum_{E\in \mathcal{E}_{D,h}^i}\frac{\sigma_0^2}{h_E}\|Z_h\|^2_{L^2(E)}+\sum_{E\in \mathcal{E}_{D,h}^i}\frac{\sigma_0^2}{h_E}\|\tilde{Z}_h\|^2_{L^2(E)}\Big]dt.\label{4.44eq} \nonumber
\end{eqnarray}
By consolidating the estimates for $\mathpzc{B}_1$ through $\mathpzc{B}_8$ with \eqref{4.31eq} and incorporating $\|\nabla e_z\|_{L^2(K)}\leq |\|e_z|\|_\mathscr{E}$ and $a_0\|e_z\|_{L^2(K)}\leq |\|e_z|\|_\mathscr{E}$, we arrive at
\begin{align}
&\frac{1}{2}\|e_z(0)\|^2+c_a\sum_{i=1}^{N_T}\int_{t_{i-1}}^{t_i}\|e_z\|^2_{\mathscr{E}}dt~\leq~ \frac{1}{2\epsilon_1}\|y(Q_h)-\hat{Y}_h\|^2_{L^2(I;L^2(\Omega))}+ \frac{1}{2}\max\{\frac{1}{\epsilon_2}, \frac{c_0^2c_{tr}^2}{\epsilon_3},\frac{1}{\epsilon_5}\} \nonumber\\&~~\times\sum_{i=1}^{N_T}\int_{t_{i-1}}^{t_i}\Big[\sum_{K\in \mathscr{T}_h^i}\big(h_K^2\|(a_{0,h}-a_0)\tilde{Z}_h\|^2_{L^2(K)}+\|\hat{y}_{d,h}-y_d\|^2_{L^2(K)}\big)+\sum_{E\in \mathcal{E}_{N,h}^i}\|r_{_N}-\hat{\mathcal{R}}_{N,h}\|^2_{L^2(E)}\Big]dt\nonumber  \\
&~~+\frac{2}{\epsilon_4}\, \sum_{i=1}^{N_T}\int_{t_{i-1}}^{t_i}\|\tilde{Z_h}-Z_h\|^2_{\mathscr{E}}dt+ \frac{1}{2\epsilon_5} \sum_{i=1}^{N_T}\int_{t_{i-1}}^{t_i}\Big[\sum_{K\in \mathscr{T}_h^i}h_K^2\big\{\|\hat{Y}_h-\hat{y}_{d,h}+Z_{h,t}+\Delta \tilde{Z}_h-a_{0,h}\tilde{Z}_h\|^2_{L^2(K)}\big\}\Big]dt\nonumber  \displaybreak[0]\\
&~~+\frac{1}{2\epsilon_6}\sum_{i=1}^{N_T}\int_{t_{i-1}}^{t_i}\sum_{E\in \mathcal{E}_{N,h}^i }h_E\|\hat{\mathcal{R}}_{N,h}-\mathbf{n}_{E} \cdot \nabla \tilde{Z}_h\|^2_{L^2(E)}dt+\frac{1}{2}\max\{\frac{1}{\epsilon_8},\frac{1}{\epsilon_9}\}\sum_{i=1}^{N_T}\int_{t_{i-1}}^{t_i}\sum_{E\in \mathcal{E}_{0,h}^i}\Big[\frac{\sigma_0^2}{h_E}\|\sjump{\tilde{Z}_h}\|^2_{L^2(E)}\nonumber \\
&~~+h_E\|\sjump{\nabla \tilde{Z}_h}^2_{L^2(E)}\Big]dt+c_{1,8}\sum_{i=1}^{N_T}\int_{t_{i-1}}^{t_i}\sum_{E\in \mathcal{E}_{D,h}^i }\Big[\frac{(\sigma_0+1)}{h_E}\| \tilde{Z}_h\|^2_{L^2(E)}+\frac{(\sigma_0+1)}{h_E}\|Z_h\|^2_{L^2(E)}\Big]dt\nonumber\\&~~
 +c_{1,8}\sum_{i=1}^{N_T}\int_{t_{i-1}}^{t_i}\sum_{E\in \mathcal{E}_{0,h}^i }\Big[\frac{(\sigma_0+1)}{h_E}\| \sjump{\tilde{Z}_h}\|^2_{L^2(E)}+\frac{(\sigma_0+1)}{h_E}\|\sjump{Z_h}\|^2_{L^2(E)}\Big]dt \nonumber \displaybreak[0]\\&~~+ c_{1,9} \sum_{i=1}^{N_T}\int_{t_{i-1}}^{t_i}\Big[\sum_{E\in \mathcal{E}_{0,h}^i}h_E\|\sjump{\tilde{Z}_h}\|^2_{L^2(E)}+\sum_{E\in \mathcal{E}_{D,h}^i}h_E\|\tilde{Z}_h\|^2_{L^2(E)}\Big]dt
 +\frac{(5C+6)\epsilon}{2}\; \sum_{i=1}^{N_T}\int_{t_{i-1}}^{t_i}|\|e_z|\|_{\mathscr{E}}^2dt,\nonumber
\end{align}
where $c_{1,7}=c(\epsilon)\max\{1,c^2_{tr}c_0^2\},\, c_{1,8}=\frac{1}{2}\max\{3c(\epsilon ), 3c(\epsilon )c_{1,6},1\}$ and  $c_{1,9}=c_{1,6}c(\epsilon)$, respectively. Setting $c_{1,10}=\frac{2}{c_a}\max\{4c(\epsilon),c_{1,7},c_{1,8},c_{1,9}\}$ with $\epsilon=\frac{c_a}{(5C+6)}$, and then kick-back arguments yields the inequality \eqref{intadbound4.37}. \smallskip

\noindent
\textbf{(ii) \tt  Bound for the intermediate state error}. 
Analogous to the adjoint-state analysis, we define $e_y=y(Q_h)-Y_h$, $\phi \in \mathpzc{Y}$ and $\phi_h\in  \mathpzc{V}_h$. By invoking \eqref{redisoptstate12} and rearranging the terms, we find that 
\begin{align}
&(e_{y,t}, \phi)+a_h(e_y, \phi)~=~(y_t(Q_h), \phi)+a_h(y(Q_h), \phi)-[(Y_{h,t},\phi)+a_h(\hat{Y_h}, \phi)+a_h(Y_h-\hat{Y_h}, \phi)]
\nonumber\\&\hspace{1.2cm}=(f,\phi)+(q+g_N, \phi)_{L^2(\Gamma_{_N})}+a_h(\hat{Y_h}-Y_h, \phi)-[(Y_{h,t},\phi-\phi_h)+a_h(\hat{Y}_h,\phi-\phi_h)]\nonumber\\&\hspace{1.5cm}-[(Y_{h,t},\phi_h)+a_h(\hat{Y}_h,\phi_h)]\nonumber\\
&\hspace{1.2cm}=~(f-\hat{f}_h,\phi)+(g_N-\hat{\mathcal{G}}_{N,h}, \phi)_{L^2(\Gamma_{_N})}+(q-Q_h, \phi)_{L^2(\Gamma_{_N})}+a_h(\hat{Y_h}-Y_h, \phi)\nonumber\\
&\hspace{1.5cm}+\big[\sum_{K\in\mathscr{T}_h^i}(\hat{f}_h-Y_{h,t},\phi-\phi_h)_K-a_h(\hat{Y}_h,\phi-\phi_h)+\sum_{E\in \mathcal{E}^i_{N,h}}(Q_h+\hat{\mathcal{G}}_{N,h},\phi-\phi_h)_E\nonumber\\
&\hspace{1.5cm}-\sum_{E\in \mathcal{E}^i_{D,h}}(\hat{\mathcal{G}}_{D,h},\frac{\sigma_0}{h_E}\mathbf{n}_E \cdot \sjump{\phi_h}- \smean{\nabla \phi_h})_E\big]. \hspace{2cm}
\end{align}
Set $\phi=e_y$, $\psi=e_y-\phi_h$, then use of bilinear form \eqref{weakform} on the term $(\nabla \hat{Y}_h, \nabla \psi)_K$, and similar to the treatment of the adjoint-state in \eqref{4.27eq}. This yields 
\begin{align}
(e_{y,t}, \phi)&+a_h(e_y, \phi)
=(f-\hat{f}_h,\phi)+(g_N-\hat{\mathcal{G}}_{N,h}, \phi)_{L^2(\Gamma_{_N})}+(q-Q_h, \phi)_{L^2(\Gamma_{_N})}+a_h(\hat{Y_h}-Y_h, \phi)\nonumber\\&\hspace{0.6cm}+\sum_{K\in\mathscr{T}_h^i}\Big[(\hat{f}_h-Y_{h,t}+\Delta\hat{Y}_h-a_{0,h}\hat{Y}_h,\psi)_K+((a_{0,h}-a_0)\hat{Y}_h,\psi)_K\Big]\nonumber\\&\hspace{0.6cm}+\sum_{E\in \mathcal{E}^i_{N,h}}(Q_h+\hat{\mathcal{G}}_{N,h}-\mathbf{n}_E\cdot \nabla \hat{Y}_h,\psi)_E
+\sum_{E\in \mathcal{E}^i_{D,h}}\Big[(\smean{\nabla \psi},\sjump{\hat{Y}_h})_E -\frac{\sigma_0}{h_E}(\sjump{\hat{Y}_h}, \sjump{\psi})_E\Big]\nonumber\\&\hspace{0.6cm}+\sum_{E\in \mathcal{E}^i_{0,h}}\Big[(\smean{\nabla \psi}, \sjump{\hat{Y}_h})_E-\frac{\sigma_0}{h_E}(\sjump{\hat{Y}_h}, \sjump{\psi})_E-(\sjump{\nabla \hat{Y}_h},\smean{\psi})_E\Big]
\nonumber\\&\hspace{0.6cm}-\sum_{E\in \mathcal{E}^i_{D,h}}(\hat{\mathcal{G}}_{D,h},\frac{\sigma_0}{h_E}\mathbf{n}_E \cdot \sjump{\phi_h}- \smean{\nabla \phi_h})_E. \label{4.49eqb}
\end{align}
Following a similar approach as in the adjoint-state analysis \eqref{4.29eq} and rearrange the terms, we obtain
\begin{align}
&(e_{y,t}, \phi)+a_h(e_y, \phi)~=~ (f-\hat{f}_h,\phi)+(g_N-\hat{\mathcal{G}}_{N,h}, \phi)_{L^2(\Gamma_{_N})}+(q-Q_h, \phi)_{L^2(\Gamma_{_N})}+a_h(\hat{Y_h}-Y_h, \phi)\nonumber\\&\hspace{0.4cm}+\sum_{K\in\mathscr{T}_h^i}\Big[(\hat{f}_h-Y_{h,t}+\Delta\hat{Y}_h-a_{0,h}\hat{Y}_h,\psi)_K+((a_{0,h}-a_0)\hat{Y}_h,\psi)_K\Big]+\sum_{E\in \mathcal{E}^i_{N,h}}(Q_h+\hat{\mathcal{G}}_{N,h}-\mathbf{n}_E\cdot \nabla \hat{Y}_h,\psi)_E
\nonumber\\&\hspace{0.4cm}-\sum_{E\in \mathcal{E}^i_{0,h}}\Big[\frac{\sigma_0}{h_E}(\sjump{\hat{Y}_h}, \sjump{\psi})_E+(\sjump{\nabla \hat{Y}_h},\smean{\psi})_E\Big]
+\sum_{K\in\mathscr{T}_h^i}\Big[(\nabla e_y,\nabla (\hat{Y}_h-\upsilon))_K+(a_0e_y,\hat{Y}_h-\upsilon)_K\Big]\nonumber\\
&\hspace{0.4cm}-\sum_{E\in \mathcal{E}^i_{0,h}}(\smean{\nabla (\hat{Y}_h-\upsilon)},[[e_y]])_E-\sum_{E\in \mathcal{E}^i_{D,h}}(\mathbf{n}_E \cdot\nabla (\hat{Y}_h-\upsilon),e_y)_E+\sum_{E\in \mathcal{E}^i_{0,h}}\frac{\sigma_0}{h_E}(\sjump{e_y},\sjump{\hat{Y}_h})_E\nonumber \displaybreak[0]\\
&\hspace{0.4cm}-\sum_{E\in \mathcal{E}^i_{D,h}}\frac{\sigma_0}{h_E}(g_D-\mathcal{G}_{D,h},\hat{Y}_h)_E-\sum_{E\in \mathcal{E}^i_{D,h}}(\hat{\mathcal{G}}_{D,h}-\hat{Y}_h,\frac{\sigma_0}{h_E}\mathbf{n}_E \cdot \sjump{\phi_h}- \smean{\nabla \phi_h})_E\nonumber\\
&\hspace{0.4cm}-\sum_{E\in \mathcal{E}^i_{D,h}}\frac{\sigma_0}{h_E}(\sjump{\hat{Y}_h},\sjump{\phi})_E+\sum_{E\in \mathcal{E}^i_{D,h}}(\smean{\nabla \phi},\sjump{\hat{Y}_h})_E.\nonumber 
\end{align}
Using similar approach as \eqref{4.29eq}, and choosing $\phi=e_y$, we leverage the coercivity of the bilinear form. Integrating the resulting expression from $t_{i-1}$ to $t_i$ and summing over all time steps from $1$ to $N_T$, we arrive at
\begin{align}
&\frac{1}{2}\|e_y(T)\|^2_{L^2(\Omega)}+c_a\sum_{i=1}^{N_T}\int_{t_{i-1}}^{t_i}|\|e_y|\|_{\mathscr{E}}^2\, dt~\leq~\frac{1}{2}\|e_y(0)\|^2_{L^2(\Omega)}+\sum_{i=1}^{N_T}\int_{t_{i-1}}^{t_i}(f-\hat{f}_h,e_y)dt\nonumber\\&\hspace{0.6cm}+\sum_{i=1}^{N_T}\int_{t_{i-1}}^{t_i}(g_N-\hat{\mathcal{G}}_{N,h}, e_y)_{L^2(\Gamma_{_N})}dt+\sum_{i=1}^{N_T}\int_{t_{i-1}}^{t_i}(q-Q_h, e_y)_{L^2(\Gamma_{_N})}dt+\sum_{i=1}^{N_T}\int_{t_{i-1}}^{t_i}a_h(\hat{Y_h}-Y_h, e_y)dt
\nonumber\\&\hspace{0.6cm}+\sum_{i=1}^{N_T}\int_{t_{i-1}}^{t_i}\sum_{K\in\mathscr{T}_h^i}\Big[(\hat{f}_h-Y_{h,t}+\Delta\hat{Y}_h-a_{0,h}\hat{Y}_h,\psi)_K+((a_{0,h}-a_0)\hat{Y}_h,\psi)_K\Big]dt\nonumber \\&\hspace{0.6cm}+\sum_{i=1}^{N_T}\int_{t_{i-1}}^{t_i}\sum_{E\in \mathcal{E}^i_{N,h}}(Q_h+\hat{\mathcal{G}}_{N,h}-\mathbf{n}_E\cdot \nabla \hat{Y}_h,\psi)_Edt-\sum_{i=1}^{N_T}\int_{t_{i-1}}^{t_i}\sum_{E\in \mathcal{E}^i_{0,h}}\Big[\frac{\sigma_0}{h_E}(\sjump{\hat{Y}_h},\sjump{\psi})_E\nonumber \\
&\hspace{0.6cm}+(\sjump{\nabla \hat{Y}_h},\smean{\psi})_E\Big]dt
+\sum_{i=1}^{N_T}\int_{t_{i-1}}^{t_i}\sum_{K\in\mathscr{T}_h^i}\Big[(\nabla e_y,\nabla (\hat{Y}_h-\upsilon))_K+(a_0e_y,\hat{Y}_h-\upsilon)_K\Big]dt\nonumber\\
&\hspace{0.6cm}-\sum_{i=1}^{N_T}\int_{t_{i-1}}^{t_i}\sum_{E\in \mathcal{E}^i_{0,h}}(\smean{\nabla (\hat{Y}_h-\upsilon)},[[e_y]])_Edt-\sum_{i=1}^{N_T}\int_{t_{i-1}}^{t_i}\sum_{E\in \mathcal{E}^i_{D,h}}(\mathbf{n}_E \cdot\nabla (\hat{Y}_h-\upsilon),e_y)_Edt\nonumber \\
&\hspace{0.6cm}+\sum_{i=1}^{N_T}\int_{t_{i-1}}^{t_i}\sum_{E\in \mathcal{E}^i_{0,h}}\Big[\frac{\sigma_0}{h_E}(\sjump{Y_h},\sjump{\hat{Y}_h})_E+(\smean{\nabla \psi}, \sjump{\hat{Y}_h})_E\Big]dt\nonumber\\&\hspace{0.6cm}-\sum_{i=1}^{N_T}\int_{t_{i-1}}^{t_i}\sum_{E\in \mathcal{E}^i_{D,h}}\frac{\sigma_0}{h_E}(g_D-g_{D,h},\hat{Y}_h)_Edt-\sum_{i=1}^{N_T}\int_{t_{i-1}}^{t_i}\sum_{E\in \mathcal{E}^i_{D,h}} \frac{\sigma_0}{h_E}(\sjump{\hat{Y}_h},[[e_y]])_Edt\nonumber\displaybreak[0]\\
&\hspace{0.6cm}+\sum_{i=1}^{N_T}\int_{t_{i-1}}^{t_i}\sum_{E\in \mathcal{E}^i_{D,h}} (\smean{\nabla e_y},\sjump{\hat{Y}_h})_Edt+\sum_{i=1}^{N_T}\int_{t_{i-1}}^{t_i}\sum_{E\in \mathcal{E}^i_{D,h}}(\hat{\mathcal{G}}_{D,h}-\hat{Y}_h,\frac{\sigma_0}{h_E}\mathbf{n}_E \cdot \sjump{\phi_h}- \smean{\nabla \phi_h})_Edt.\label{4.51bbdb}
\end{align}
Utilizing the similar idea as above adjoint error,  and then aplying the inequalities \eqref{appinq4.24}--\eqref{appinq4.26} with $a_0\|e_p\|_{L^2(K)} \leq |\|e_p|\|_{\mathscr{E}}$ and $\|\nabla e_p\|_{L^2(K)} \leq |\|e_p|\|_{\mathscr{E}}$, it follows that  
\begin{align}
&\frac{1}{2}\|e_y(T)\|^2_{L^2(\Omega)}+c_a\sum_{i=1}^{N_T}\int_{t_{i-1}}^{t_i}\|e_y\|_{\mathscr{E}}^2\,dt~\leq~c_{1,11}\Big\{\sum_{K\in \mathscr{T}_h^i}\|y_0-Y_{0,h}\|^2_{L^2(K)}\nonumber\\&\hspace{0.6cm}+  \sum_{i=1}^{N_T}\int_{t_{i-1}}^{t_i}\Big[\sum_{K\in \mathscr{T}_h^i}\big(h_K^2\|(a_{0,h}-a_0)\hat{Y}_h\|^2_{L^2(K)}+\|f-\hat{f}_h\|^2_{L^2(K)}\big)+\sum_{E\in \mathcal{E}_{N,h} }\|g_N-\hat{\mathcal{G}}_{N,h}\|^2_{L^2(E)}\nonumber \displaybreak[0]\\
&\hspace{0.6cm}+\sum_{E\in \mathcal{E}^i_{D,h}}\frac{\sigma_0}{h_E}\|g_D-g_{D,h}\|^2_{L^2(E)}\Big]dt+ \sum_{i=1}^{N_T}\int_{t_{i-1}}^{t_i}\sum_{E\in \mathcal{E}_{N,h} }\|q-Q_h\|^2_{L^2(E)}dt+\sum_{i=1}^{N_T}\int_{t_{i-1}}^{t_i}\|\hat{Y_h}-Y_h\|_{\mathscr{E}}^2\,dt\nonumber\\
&\hspace{0.6cm}+ \sum_{i=1}^{N_T}\int_{t_{i-1}}^{t_i}\Big[ \sum_{K\in\mathscr{T}_h^i}h_K^2\|\hat{f}_h-Y_{h,t}+\Delta\hat{Y}_h-a_{0,h}\hat{Y}_h\|^2_{L^2(K)}+\sum_{E\in \mathcal{E}^i_{N,h}}h_E\|Q_h+\hat{\mathcal{G}}_{N,h}-\mathbf{n}_E\cdot \nabla \hat{Y}_h\|^2_{L^2(E)}\nonumber\\&\hspace{0.6cm}+\sum_{E\in \mathcal{E}^i_{D,h}}\frac{\sigma_0}{h_E}\|\hat{\mathcal{G}}_{D,h}-\hat{Y}_h\|^2_{L^2(E)} \Big]dt+\sum_{i=1}^{N_T}\int_{t_{i-1}}^{t_i}\sum_{E\in \mathcal{E}^i_{0,h}}\Big[\frac{\sigma^2_0}{h_E}\|\sjump{\hat{Y}_h}\|^2_{L^2(E)}+h_E\|\sjump{\nabla \hat{Y}_h}\|^2_{L^2(E)}\Big]dt\nonumber\\&\hspace{0.6cm}+\sum_{i=1}^{N_T}\int_{t_{i-1}}^{t_i}\sum_{E\in \mathcal{E}^i_{0,h}}\Big[\frac{(\sigma_0+1)}{h_E}\|\sjump{\hat{Y}_h}\|^2_{L^2(E)}+\frac{(\sigma_0+1)}{h_E}\|\sjump{Y_h}\|^2_{L^2(E)} \Big]dt\nonumber\\&\hspace{0.6cm}+\sum_{i=1}^{N_T}\int_{t_{i-1}}^{t_i}\sum_{E\in \mathcal{E}^i_{D,h}}\Big[\frac{(\sigma_0+1)}{h_E}\|\hat{Y}_h\|^2_{L^2(E)}+\frac{(\sigma_0+1)}{h_E}\|Y_h\|^2_{L^2(E)} \Big]dt\nonumber\\&\hspace{0.6cm}+\sum_{i=1}^{N_T}\int_{t_{i-1}}^{t_i}\Big[\sum_{E\in \mathcal{E}_{0,h}^i}h_E\|\sjump{\hat{Y}_h}\|^2_{L^2(E)}+\sum_{E\in \mathcal{E}_{D,h}^i}h_E\|\hat{Y}_h\|^2_{L^2(E)}\Big]dt\Big\}+4 \epsilon (c+2)|\|e_y|\|^2_{L^2(I; \mathscr{E})}.
\end{align}
By choosing $c_{1,12}$=$\frac{2}{c_a}c_{1,11}$ with $\epsilon=\frac{c_a}{8(c+2)}$, where $c_{1,11}=\max\{3c_{1,6}c(\epsilon),c(\epsilon),2\epsilon,1/2\}$, and applying kick-back arguments, we ultimately establish the desired inequality \eqref{intadstatebound4.49}, this completes the proof of the theorem.
\end{proof} 
Combining the results from Lemmas \ref{lm4.1int}--\ref{interrbdlm4.3}, we can readily deduce the following result. The proof is straightforward and thus we omitted.
\begin{theorem}\label{4.3theorem}
Let $(y,z,q)$ and $(Y_h, Z_h,Q_h)$ be the solutions of \eqref{weakformstate}--\eqref{weakformcontrol} and \eqref{redisoptstate12}--\eqref{redisfirstoptcond12}, respectively, and the co-control  variable $\mu$ and the discrete co-control variable $\mu_h$ as defined in  \eqref{optmallitycon1} and \eqref{redisopt11}, respectively.  Assume that all the conditions of Lemma \ref{lm4.1int} is fulfilled. Then there exists a positive constant $C_{4,9}$ such that the following reliability type error estimation holds, for $t\in I$:
\begin{align}
&|\|y-Y_h|\|_{L^2(I; \mathscr{E})}+|\|z-Z_h|\|_{L^2(I; \mathscr{E})}+\|q-Q_h\|_{L^2(I;L^2(\Gamma_{_N}))}+\|\mu-\hat{\mu}_h\|_{L^2(I;L^2(\Gamma_{_N}))}\nonumber\\&~~~~~~~~~\leq~ C_{4,9}\big(\varTheta_{yzq}+\Upsilon_{yzq}+\Xi_{T_{yz}}\big).
\end{align}
\end{theorem}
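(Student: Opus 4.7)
The plan is to cascade the two preceding lemmas and collapse every right-hand-side contribution into one of the three bundled quantities $\Upsilon_{yzq}$, $\varTheta_{yzq}$, and $\Xi_{T_{yz}}$. Concretely, I would take Lemma~\ref{lm4.1int} as the backbone: it already controls the full left-hand side of the theorem by $\eta_q$, the control-data oscillation $\|\alpha(q_d-\hat q_{d,h})\|_{L^2(I;L^2(\Gamma_N))}$, the temporal jump $\|\nabla(Z_h-\tilde Z_h)\|_{L^2(I;L^2(\Omega))}$, and the two intermediate errors $|\|y(Q_h)-Y_h|\|_{L^2(I;\mathscr{E})}$ and $|\|z(Q_h)-Z_h|\|_{L^2(I;\mathscr{E})}$. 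Thus the job reduces to disposing of these last two intermediate-error terms using Lemma~\ref{interrbdlm4.3}.

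First I would substitute the state bound \eqref{intadstatebound4.49} for $|\|y(Q_h)-Y_h|\|$ into the estimate of Lemma~\ref{lm4.1int}. This introduces a harmless $\varTheta_y+\varTheta_{y,T}+\eta_y$ contribution, but also brings back a term proportional to $\|q-Q_h\|_{L^2(I;L^2(\Gamma_N))}$, which already sits on the left-hand side. I would handle this by applying the elementary $\sqrt{a+b}\le\sqrt a+\sqrt b$ to split the bound from \eqref{intadstatebound4.49}, and then either choose the regularization parameter $\alpha$ (entering through $C_{4,4}$) large enough so that $C_{4,8}\sqrt{c_{1,12}}<1$, or, more robustly, invoke Young's inequality to isolate an $\epsilon \|q-Q_h\|^2$ on the right and absorb it into the left via a kick-back argument, exactly in the spirit of the proofs of the two preceding lemmas.

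Next I would substitute the adjoint bound \eqref{intadbound4.37} for $|\|z(Q_h)-Z_h|\|$. The only non-trivial ingredient there is $\|y(Q_h)-\hat Y_h\|_{L^2(I;L^2(\Omega))}$, which I would split by the triangle inequality as
\begin{equation*}
\|y(Q_h)-\hat Y_h\|_{L^2(I;L^2(\Omega))} \le \|y(Q_h)-Y_h\|_{L^2(I;L^2(\Omega))}+\|Y_h-\hat Y_h\|_{L^2(I;L^2(\Omega))}.
\end{equation*}
Using \eqref{assum2} together with the definition \eqref{energynorm} of the energy norm, the first piece is dominated by $|\|y(Q_h)-Y_h|\|_{L^2(I;\mathscr{E})}$ and is therefore already controlled by the previous step; the second piece is exactly the temporal indicator $\varTheta_{y,T}$ (modulo the multiplicative constants coming from $a_0$). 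Meanwhile $\|\nabla(Z_h-\tilde Z_h)\|_{L^2(I;L^2(\Omega))}$ is bounded by $\varTheta_{z,T}$ through \eqref{tempest4.18}, and $\|\alpha(q_d-\hat q_{d,h})\|$ is one of the summands of $\varTheta_q$ in \eqref{4.17qen}.

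After these substitutions every residual term regroups naturally: $\eta_y^2+\eta_z^2+\eta_q^2$ forms $\Upsilon_{yzq}^2$, the three data-oscillation clusters assemble into $\varTheta_{yzq}^2$, and the two temporal contributions $\varTheta_{y,T}^2+\varTheta_{z,T}^2$ recombine into $\Xi_{T_{yz}}^2$. Taking the maximum of all the accumulated multiplicative constants as $C_{4,9}$ yields the claim. The main obstacle I anticipate is the circular coupling between $\|q-Q_h\|$ and $|\|y(Q_h)-Y_h|\|$: the two lemmas bound each in terms of the other, so the absorption/kick-back step must be performed carefully — either by tracking the constants $C_{4,8}$ and $c_{1,12}$ explicitly or by a Young-inequality splitting with a parameter chosen once at the end; everything else is essentially bookkeeping.
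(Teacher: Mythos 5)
Your proposal is essentially the paper's (omitted) proof: Theorem \ref{4.3theorem} is stated as a direct combination of Lemma \ref{lm4.1int} with the intermediate bounds \eqref{intadbound4.37}--\eqref{intadstatebound4.49} of Lemma \ref{interrbdlm4.3}, with $\|y(Q_h)-\hat Y_h\|_{L^2(I;L^2(\Omega))}$ split against $\varTheta_{y,T}$, $\|\nabla(Z_h-\tilde Z_h)\|$ against $\varTheta_{z,T}$, and the residual, oscillation and temporal pieces regrouped into $\Upsilon_{yzq}$, $\varTheta_{yzq}$ and $\Xi_{T_{yz}}$, exactly as you describe. One caveat on the circularity you rightly flag: at the level of the two finished lemma inequalities the reappearing $\|q-Q_h\|_{L^2(I;L^2(\Gamma_{_N}))}$ carries a fixed product of the lemmas' constants with no tunable parameter left, so a Young-type kick-back cannot shrink it there — the absorption genuinely requires that accumulated constant (roughly $C_{4,7}\sqrt{c_{1,10}}\,\sqrt{c_{1,12}}$, which scales like $\alpha^{-1}$) to be below one, a smallness condition the paper leaves implicit by declaring the combination ``straightforward'' and omitting the proof.
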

\subsection{Efficient-type A posteriori Error Estimation}
This section examines the efficiency of error estimators, taking into account data oscillations. We show that local error estimators are bounded above by the corresponding local errors, data oscillations, and active control contributions. To this end, we employ bubble functions, as in \cite{pascal2007}, \cite{verfurth1996}, and introduce element bubble functions $\mathpzc{b}_K$ based on the barycentric coordinates $\lambda_j, j = 1, 2, 3,$ of each triangle $K$,
\begin{equation}
\mathpzc{b}_K=27\lambda_1\,\lambda_2\,\lambda_3. \label{eltbubblefunc}
\end{equation}
In contrast, edge bubble functions, denoted by $\mathpzc{b}_E$, are given by
\begin{equation}
\mathpzc{b}_E|_K=4\lambda_1\,\lambda_2, \quad \text{and} \quad \mathpzc{b}_E|_{K^E}=4\lambda^e_1\,\lambda^e_2,
\end{equation}
respectively, where $\lambda_1,\,\lambda_2 \;(\text{or}~ \lambda^e_1,\,\lambda^e_2)$ are the barycentric functions of the triangle $K$ (or $K^e$) on the edge $E\in K\cap K^e$. 
Additionally, the bubble functions possess the following properties
\begin{align}
\|\mathpzc{b}_K\|_{L^\infty(K)}=1,\;  \mathpzc{b}_K\in H_0^1(K) \quad 
\text{and} \quad 
\|\mathpzc{b}_E\|_{L^\infty(E)}=1,\;  \mathpzc{b}_E\in H_0^1(\omega_E).
\end{align}
Here, $\omega_E$ denotes the patch of two elements sharing the edge $E$. We
recall from [49] that there exist constants $c_{5,i},\,i=1,\,2,\ldots,\,7$, which depend on the shape regularity of the triangulations  $\mathscr{T}^i_h,\,i=1,\,2,\ldots,\,N,$ such that the polynomials $\varPhi$ and $\varPsi$, defined on the elements and the patch $\omega_E$, satisfy for any element $K\in \mathscr{T}^i_h$, edge $E\in \mathcal{E}_h^i$,
\begin{subequations}
\begin{align}
&\|\varPhi\|^2_{L^2(K)}~\leq~c_{5,1}\,(\varPhi,\varPhi \mathpzc{b}_K)_K, \quad  K\in \mathscr{T}^i_h,\label{4.66al} \vspace*{2cm} \\ 
& \|\varPhi \mathpzc{b}_K\|_{L^2(K)}~\leq~c_{5,2}\, \|\varPhi\|_{L^2(K)}, \quad  K\in \mathscr{T}^i_h,\label{4.66bl} \vspace*{2cm} \\ 
& \|\nabla (\varPhi \mathpzc{b}_K)\|_{L^2(K)}~\leq~c_{5,3}\,h_K^{-1} \|\varPhi\|_{L^2(K)}, \quad  K\in \mathscr{T}^i_h,\label{4.66cl} \vspace*{2cm}\\  
&\|\varPsi\|^2_{L^2(E)}~\leq~c_{5,4}\,(\varPsi,\varPsi \mathpzc{b}_E)_E, \quad  E\in \mathcal{E}^i_h,\label{4.66dl} \vspace*{2cm}\\ 
& \|\varPsi \mathpzc{b}_E\|_{L^2(E)}~\leq~c_{5,5}\, \|\varPsi\|_{L^2(E)}, \quad  E\in \mathcal{E}^i_h,\label{4.66el} \vspace*{2cm}\\ 
& \|\varPsi \mathpzc{b}_E\|_{L^2(\omega_E)}~\leq~c_{5,6}\,h^{1/2}_E \|\varPsi\|_{L^2(E)}, \quad  \omega_E=K\cup K_E,\quad E=K\cap K_E,\label{4.66fl} \vspace*{2cm}\\ 
& \|\nabla (\varPsi \mathpzc{b}_E)\|_{L^2(\omega_E)}~\leq~c_{5,7}\,h_E^{-1/2} \|\varPsi\|_{L^2(E)}, \quad  \omega_E=K\cup K_E,\quad E=K\cap K_E, \label{4.66gl} 
\end{align}
\end{subequations}
respectively. We now introduce the local energy norm $\||\cdot\||_{\mathcal{S}}$ on a set of elements $\mathcal{S}$ by 
\begin{equation}
|\| \phi |\|_{\mathcal{S}}:=\Big(\sum_{K \in \mathcal{S}}(\|\nabla \phi\|_{L^2(K)}^{2}+a_0\|\phi\|_{L^2(K)}^{2} )
+\sum_{\overset{E \in \mathcal{E}_{0,h} \cup \mathcal{E}_{D,h};}{E\in \partial K,\,K\subset \mathcal{S} }} (h_{E} \|\smean{\nabla \phi}\|_{L^2(E)}^{2}+\frac{\sigma_0}{h_{E}}\| \sjump{\phi} \|_{L^2(E)}^{2})\Big)^{1 / 2},\label{localenergynorm}\nonumber
\end{equation}
and the time-dependent energy-norm is denoted by
$$|\|\phi|\|_{L^2(I;\mathcal{S})}~=~\Big(\sum_{i=1}^{N_T}\int_{t_{i-1}}^{t_i} |\|\phi|\|_{\mathcal{S}}^2\,dt\Big)^{\frac{1}{2}}.$$
The following lemma provides lower bounds on the residual-type error estimators $\eta_{y,K}^i$ and $\eta_{z,K}^i$.
\begin{lemma}\label{lm44eltestyz} 
Let $(y,z,q)$ and $(Y_h, Z_h,Q_h)$ be the solutions of \eqref{weakformstate}--\eqref{weakformcontrol} and \eqref{redisoptstate12}--\eqref{redisfirstoptcond12}, respectively. Further, let the error estimators $\eta_{y,K}^i$, $\eta_{z,K}^i$, $i\in[1:N_T]$, and the data oscillations $\varTheta_{y,K}$, $\varTheta_{z,K}$ be defined as in \eqref{spacest4.4} and \eqref{dataest4.17}, respectively. Then there are  positive constant $c_{5,8}$ such that the following estimates hold:
\begin{subequations}
\begin{align}
&(\eta_{y,K}^i)^2~\leq~c_{5,8}\big[|\|y-Y_h|\|_{L^2(I_i;K)}^2+\varTheta_{y,K}^2+h_K^2|\|\partial_t(y-Y_{h})|\|^2_{L^2(I_i;K)}\big],\label{4.67aelt}\\
&(\eta_{z,K}^i)^2~\leq~c_{5,8}\big[|\|z-Z_h|\|_{L^2(I_i;K)}^2+\varTheta_{z,K}^2+h_K^2|\|\partial_t(z-Z_{h})|\|^2_{L^2(I_i;K)}+h_K^2\|y-\hat{Y}_h\|_{L^2(I_i;L^2(K))}^2\big], \label{4.67belt}
\end{align}
\end{subequations}
where $c_{5,8}=c_{5,1}^2\max\{c_{5,2}^2,c_{5,3}^2\}$.
\end{lemma}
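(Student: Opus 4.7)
I plan to adopt the standard bubble-function (Verfürth-type) lower bound argument, applied element by element at each time level. For the state residual \eqref{4.67aelt}, denote the interior residual on $K$ by $R_y^i := \hat{f}_h - Y_{h,t} + \Delta \hat{Y}_h - a_{0,h}\hat{Y}_h$ and set $\varphi := R_y^i\,\mathpzc{b}_K$, extended by zero outside $K$ so that $\varphi \in H_0^1(K)$. By the first bubble-function property \eqref{4.66al}, $\|R_y^i\|_{L^2(K)}^2 \le c_{5,1}(R_y^i,\varphi)_K$, which reduces the task to controlling the right-hand side.

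Next, because $\varphi$ vanishes on $\partial K$, testing the strong state equation $y_t - \Delta y + a_0 y = f$ with $\varphi$ and integrating by parts yields $(y_t,\varphi)_K + (\nabla y,\nabla\varphi)_K + (a_0 y,\varphi)_K = (f,\varphi)_K$. Subtracting this identity from $(R_y^i,\varphi)_K$ and integrating the $(\Delta \hat{Y}_h,\varphi)_K$ term by parts (with no boundary contribution since $\varphi|_{\partial K}=0$) produces
\begin{align*}
(R_y^i,\varphi)_K &= (\hat{f}_h - f,\varphi)_K + (\partial_t(y - Y_h),\varphi)_K + (\nabla(y-\hat{Y}_h),\nabla \varphi)_K \\
&\quad - ((a_{0,h}-a_0)\hat{Y}_h,\varphi)_K - (a_0(\hat{Y}_h-y),\varphi)_K.
\end{align*}
Applying Cauchy--Schwarz to each term, together with the inverse estimates \eqref{4.66bl}--\eqref{4.66cl}, namely $\|\varphi\|_{L^2(K)}\le c_{5,2}\|R_y^i\|_{L^2(K)}$ and $\|\nabla\varphi\|_{L^2(K)}\le c_{5,3}h_K^{-1}\|R_y^i\|_{L^2(K)}$, I cancel one factor of $\|R_y^i\|_{L^2(K)}$, multiply through by $h_K$, square, and integrate over $I_i$. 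The first and fourth contributions then assemble into $(\varTheta_{y,K}^i)^2$, the second into $h_K^2|\|\partial_t(y-Y_h)|\|^2_{L^2(I_i;K)}$, and the third and fifth into $|\|y-Y_h|\|^2_{L^2(I_i;K)}$ after splitting $y-\hat{Y}_h = (y-Y_h) + (Y_h-\hat{Y}_h)$.

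For the adjoint residual \eqref{4.67belt}, the same template applies with test function $\psi := R_z^i\,\mathpzc{b}_K \in H_0^1(K)$ for $R_z^i := \hat{Y}_h - \hat{y}_{d,h} + Z_{h,t} + \Delta \tilde{Z}_h - a_{0,h}\tilde{Z}_h$, using the strong adjoint form $-z_t - \Delta z + a_0 z = y - y_d$. The only new ingredient is the source term $y - y_d$, which, when combined with $\hat{Y}_h - \hat{y}_{d,h}$ from the residual, produces precisely the extra contribution $h_K^2 \|y-\hat{Y}_h\|^2_{L^2(I_i;L^2(K))}$ appearing in \eqref{4.67belt}; the treatment of the remaining terms (gradient pairing via integration by parts, coefficient oscillation, and time derivative) is unchanged.

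The main obstacle is the mismatch between $\hat{Y}_h$ (the piecewise-constant time reconstruction used in the residual) and $Y_h$ (the piecewise-linear reconstruction used in the target energy norm). Because $Y_h$ is affine on $I_i$ and $\partial_t Y_h$ is piecewise constant in time, one has the identity $Y_h(t) - \hat{Y}_h(t) = -(t_i-t)\,\partial_t Y_h$ on $I_i$, so that $\int_{I_i}\|Y_h-\hat{Y}_h\|^2_{L^2(K)}\,dt = \tfrac{k_i^2}{3}\int_{I_i}\|\partial_t Y_h\|^2_{L^2(K)}\,dt$. The triangle inequality $\partial_t Y_h = \partial_t y - \partial_t(y-Y_h)$ then folds this contribution into $h_K^2|\|\partial_t(y-Y_h)|\|^2_{L^2(I_i;K)}$ together with a term in $\partial_t y$; the latter is absorbed either under the usual CFL-type assumption $k_i \lesssim h_K$ or by using the continuous equation to rewrite $\partial_t y$ in terms of $\Delta y$, $a_0 y$ and $f$, so that it disappears into the data-oscillation constant. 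This is the one step of the argument that requires care to land exactly on the three contributions advertised in \eqref{4.67aelt}--\eqref{4.67belt}.
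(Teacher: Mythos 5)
Your argument coincides with the paper's proof of this lemma: the same element bubble function $\mathpzc{b}_K$ applied to the residuals $\hat{f}_h-Y_{h,t}+\Delta\hat{Y}_h-a_{0,h}\hat{Y}_h$ and $\hat{Y}_h-\hat{y}_{d,h}+Z_{h,t}+\Delta\tilde{Z}_h-a_{0,h}\tilde{Z}_h$, the same insertion of the strong state/adjoint equations with elementwise integration by parts (no boundary terms since the bubble vanishes on $\partial K$), the same estimates \eqref{4.66al}--\eqref{4.66cl} followed by Young's inequality, a kick-back step, and integration over $I_i$, with the $y-y_d$ versus $\hat{Y}_h-\hat{y}_{d,h}$ pairing producing the extra $h_K^2\|y-\hat{Y}_h\|^2_{L^2(I_i;L^2(K))}$ term exactly as in the paper. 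The only divergence is your final paragraph: the paper invokes no CFL-type condition and does not rewrite $\partial_t y$ via the continuous equation — it simply leaves the terms $\|\nabla(y-\hat{Y}_h)\|_{L^2(K)}$ and $h_K^2\|a_0(y-\hat{Y}_h)\|^2_{L^2(K)}$ (respectively $z-\tilde{Z}_h$ for the adjoint) in its bound and identifies them with $|\|y-Y_h|\|_{L^2(I_i;K)}$ (respectively $|\|z-Z_h|\|_{L^2(I_i;K)}$), so your more careful treatment of the time-reconstruction mismatch goes beyond, and effectively patches, what the paper actually writes.
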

\begin{proof}
Set $\xi= \varPhi \mathpzc{b}_K$ with $\varPhi=\hat{f}_h-Y_{h,t}+\Delta\hat{Y}_h-a_{0,h}\hat{Y}_h$ in \eqref{4.66al}, where $\mathpzc{b}_K$ is the bubble function defined as in \eqref{eltbubblefunc}. Then, using the fact that $(f-y_t+\Delta y-a_0y)|_K=0$ and $\xi|_{\Gamma_K}=0$, where $\Gamma_K$ refer the boundary of $K$, we have, for $t\in I_i,\,i\in [1:N_T]$, 
\begin{align}
&h_K^2\|\varPhi\|^2_{L^2(K)}~\leq~c_{5,1}h_K^2(\hat{f}_h-Y_{h,t}+\Delta\hat{Y}_h-a_{0,h}\hat{Y}_h, \xi)_K\nonumber\\
&~~~~~\leq~c_{5,1}h_K^2\big\{(\hat{f}_h-f, \xi)_K+(y_t-Y_{h,t}, \xi)_K+(\nabla (y-\hat{Y}_h), \nabla \xi)_K+((a_0-a_{0,h})\hat{Y}_h, \xi)_K\nonumber\\&~~~~~~~~+(a_0(y-\hat{Y}_h), \xi)_K\big\}\nonumber\\
&~~~~~\leq~\frac{c^2_{5,1}}{2}\max\{c_{5,2}^2,c_{5,3}^2\}\big\{h_K^2\|\hat{f}_h-f\|^2_{L^2(K)}+h_K^2\|\partial_t(y-Y_{h})\|^2_{L^2(K)}+\|\nabla (y-\hat{Y}_h)\|^2_{L^2(K)}\nonumber\\&~~~~~~~~+h_K^2\|(a_0-a_{0,h})\hat{Y}_h\|^2_{L^2(K)}+h_K^2\|a_0(y-\hat{Y}_h)\|^2_{L^2(K)}\big\}+\frac{1}{2}h_K^2\|\varPhi\|^2_{L^2(K)}.\nonumber
\end{align} 
By invoking the kick-back argument and integrating over $t_{i-1}$ to $t_i$ with respect to time, we derive the inequality stated in \eqref{4.67aelt}.\smallskip

We proceed in a similar way by setting $\xi= \varPsi \mathpzc{b}_K$ with $\varPsi=\hat{Y}_h-\hat{y}_{d,h}+Z_{h,t}+\Delta \tilde{Z}_h-a_{0,h}\tilde{Z}_h$ in \eqref{4.66al}. Using the facts that $(y-y_d+z_t+\Delta z-a_0z)$ and $\xi$ vanish on $K$ and the boundary $\Gamma_K$, respectively. Following analogous steps, we obtain
\begin{align}
&h_K^2\|\varPsi\|^2_{L^2(K)} 
\leq~c_{5,1} h_K^2\big\{(\hat{Y}_h-y, \xi)_K+(y_d-\hat{y}_{d,h}, \xi)_K+(Z_{h,t}-z_t, \xi)_K+(\nabla (\tilde{z}_h-z), \nabla \xi)_K\nonumber\\&~~~~~~~~+((a_0-a_{0,h})\tilde{Z}_h, \xi)_K+(a_0(z-\tilde{Z}_h), \xi)_K\big\} \nonumber\\
&~~~~~~\leq~\frac{c^2_{5,1}}{2}\max\{c_{5,2}^2,c_{5,3}^2\}\big\{h_K^2\|\hat{Y}_h-y\|^2_{L^2(K)}+h_K^2\|y_d-\hat{y}_{d,h}\|^2_{L^2(K)}+h_K^2\|\partial_t(z-Z_{h})\|^2_{L^2(K)}\nonumber\\&~~~~~~~~+\|\nabla (z-\tilde{Z}_h)\|^2_{L^2(K)}+h_K^2\|(a_0-a_{0,h})\tilde{Z}_h\|^2_{L^2(K)}+h_K^2\|a_0(z-\tilde{Z}_h)\|^2_{L^2(K)}\big\}+\frac{1}{2}(\eta_{z,K}^i)^2.\nonumber
\end{align} 
The kick-back argument  and integrating over $t_{i-1}$ to $t_i$ with respect to time, ultimately yields the desired inequality \eqref{4.67belt}, which concludes the proof of the lemma.
\end{proof}
\begin{lemma}
Let $(y,z,q)$ and $(Y_h, Z_h,Q_h)$ be the solutions of \eqref{weakformstate}--\eqref{weakformcontrol} and \eqref{redisoptstate12}--\eqref{redisfirstoptcond12}, respectively. Further, let the error estimators $\eta_{y,K}^i$, $\eta_{z,K}^i$, \,$i\in [1:N_T]$ and the data oscillations $\varTheta^i_{y,K}$ $\varTheta^i_{z,K},\,i\in[1:N_T],$ be defined as in \eqref{spacest4.4} and \eqref{dataest4.17}. In addition, let $\omega_E=K \cup K_E$ be the union of any two elements, $(i.e., K,\;K_E$ such that $E=K\cap K_E)$, and the mesh shape regularity $(i.e., h_E/h_K\leq \beta_0$ with  $\beta_0>1)$.  Then there exist  positive constants $a_{5,1}$ and $a_{5,2}$ such that the following estimates hold, for each $t\in (t_{i-1},t_i],\; i\in[1:N_T]$,
\begin{subequations}
\begin{align}
h_E \| \sjump{ \nabla \hat{Y}_h }\|^2_{L^2(E)}&\leq~a_{5,1}\big[|\|y-Y_h|\|_{L^2(I_i;\omega_E)}^2+\sum_{K \in \omega_E}(\eta_{y,K}^i)^2+\sum_{K \in \omega_E} (\varTheta^i_{y,K})^2\nonumber \\
&\hspace{1.5cm}+|\|\partial_t(y-Y_{h})|\|^2_{L^2(I_i;L^2(\omega_E))}\big], \label{4.68al} \displaybreak[0]\\
h_E \| \sjump{ \nabla \tilde{Z}_h }\|^2_{L^2(E)}&\leq~a_{5,2} \big[|\|z-Z_h|\|_{L^2(I_i;\omega_E)}^2+\sum_{K \in \omega_E}(\eta_{z,K}^i)^2+\sum_{K \in \omega_E}(\varTheta^i_{z,K})^2\nonumber  \\
&\hspace{1.5cm}+\|\partial_t(z-Z_{h})\|^2_{L^2(I_i;L^2(\omega_E))}+\|y-Y_h\|_{L^2(I_i;L^2(\omega_E))}^2\big], \label{4.68b} 
\end{align}
\end{subequations}
where $a_{5,1}=a_{5,2}=2c^2_{5,4} \beta_1^2$ with $\beta_1=\max\{\beta_0c_{5,6},c_{5,7}\}$.
\end{lemma}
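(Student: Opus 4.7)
The plan is to mirror the element-residual proof of Lemma \ref{lm44eltestyz}, but with the edge bubble function $\mathpzc{b}_E$ in place of $\mathpzc{b}_K$. I will treat the state estimate \eqref{4.68al} in detail; the adjoint estimate \eqref{4.68b} is obtained symmetrically using the adjoint equation and picks up the extra $\|y-Y_h\|$-term from the right-hand side $y-y_d$.

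First I would set $\varPsi := \sjump{\nabla \hat{Y}_h}$ extended constantly in the normal direction to a piecewise polynomial on $\omega_E$, and take the test function $\xi := \varPsi\,\mathpzc{b}_E$, so $\xi$ vanishes on $\partial\omega_E\setminus E$. Since the true state $y$ is sufficiently regular so that $\sjump{\nabla y}=0$ on $E$, property \eqref{4.66dl} gives
\begin{align*}
\|\varPsi\|^2_{L^2(E)} \;\leq\; c_{5,4}\,(\varPsi,\xi)_E \;=\; c_{5,4}\,(\sjump{\nabla(\hat{Y}_h-y)},\xi)_E.
\end{align*}
Element-wise integration by parts on each $K\in\omega_E$, using that $\xi$ is supported in $\omega_E$, yields
\begin{align*}
(\sjump{\nabla(\hat{Y}_h-y)},\xi)_E \;=\; \sum_{K\in\omega_E}\!\Big[(\nabla(\hat{Y}_h-y),\nabla\xi)_K \,+\, (\Delta(\hat{Y}_h-y),\xi)_K\Big].
\end{align*}

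Next I would exploit the strong form of the state equation $y_t-\Delta y+a_0y=f$ to rewrite
\begin{align*}
\Delta(\hat{Y}_h-y) \;=\; \underbrace{\hat{f}_h-Y_{h,t}+\Delta\hat{Y}_h-a_{0,h}\hat{Y}_h}_{R_K^y} \;+\;(f-\hat{f}_h)\;-\;\partial_t(y-Y_h)\;+\;(a_{0,h}-a_0)\hat{Y}_h\;-\;a_0(y-\hat{Y}_h),
\end{align*}
so that the volume terms split precisely into the element residual whose square integrates to $(\eta_{y,K}^i)^2$, the data-oscillation contributions $\|f-\hat{f}_h\|$ and $\|(a_{0,h}-a_0)\hat{Y}_h\|$ that form $(\varTheta^i_{y,K})^2$, the time-derivative error $\partial_t(y-Y_h)$, and an $a_0(y-\hat{Y}_h)$ term that is absorbed into $|\|y-Y_h|\|_{\omega_E}$.

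Applying Cauchy--Schwarz on each $K$, then the bubble-function bounds \eqref{4.66fl}--\eqref{4.66gl},
\begin{align*}
\|\xi\|_{L^2(\omega_E)}\leq c_{5,6}\,h_E^{1/2}\|\varPsi\|_{L^2(E)},\qquad
\|\nabla\xi\|_{L^2(\omega_E)}\leq c_{5,7}\,h_E^{-1/2}\|\varPsi\|_{L^2(E)},
\end{align*}
and dividing by $\|\varPsi\|_{L^2(E)}$, I obtain
\begin{align*}
\|\varPsi\|_{L^2(E)} \;\leq\; c_{5,4}\,\Big[c_{5,7}\,h_E^{-1/2}\|\nabla(y-\hat{Y}_h)\|_{L^2(\omega_E)} + c_{5,6}\,h_E^{1/2}\,\|\mathrm{RHS\ bulk}\|_{L^2(\omega_E)}\Big].
\end{align*}
Multiplying by $h_E^{1/2}$, squaring, and invoking the shape-regularity $h_E\leq\beta_0 h_K$ converts each $h_E^2\|\cdot\|^2$ volume contribution into the $h_K^2\|\cdot\|^2$ form appearing in $(\eta_{y,K}^i)^2$ and $(\varTheta^i_{y,K})^2$; the $(\nabla(y-\hat{Y}_h))$-term is absorbed into $|\|y-Y_h|\|_{\omega_E}^2$ up to adding and subtracting $Y_h-\hat{Y}_h$. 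Finally, integrating in $t$ over $I_i$ produces the stated bound \eqref{4.68al} with $a_{5,1}=2c_{5,4}^2\beta_1^2$, where $\beta_1=\max\{\beta_0 c_{5,6},c_{5,7}\}$.

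The main technical obstacle is bookkeeping: in the edge integration by parts one must carefully ensure the normal-extension of $\varPsi$ is polynomial so that \eqref{4.66dl}--\eqref{4.66gl} apply, and the decomposition of $\Delta(\hat{Y}_h-y)$ must be grouped so that every residual term lands either in the target estimator $(\eta_{y,K}^i)^2$, in the oscillation $(\varTheta_{y,K}^i)^2$, in $\partial_t(y-Y_h)$, or is controlled by $|\|y-Y_h|\|_{\omega_E}$; a kick-back of the $a_0(y-\hat{Y}_h)$-term under the $h_E^2$ weight is what closes the argument. For the adjoint estimate \eqref{4.68b}, the identical strategy with the backward-in-time equation $-z_t-\Delta z+a_0z=y-y_d$ produces an additional $\|y-\hat{Y}_h\|_{L^2(\omega_E)}$ contribution after a further triangle inequality against $Y_h$, which accounts for the extra term on the right-hand side of \eqref{4.68b}.
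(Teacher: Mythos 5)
Your proposal is correct and follows essentially the same route as the paper: the edge-bubble test function $\xi=\sjump{\nabla\hat{Y}_h}\,\mathpzc{b}_E$ with property \eqref{4.66dl}, the fact $\sjump{\nabla y}=0$, element-wise integration by parts over $\omega_E$, substitution of the strong state (resp.\ adjoint) equation to split the bulk term into the element residual, data oscillations, the time-derivative error and the $a_0(y-\hat{Y}_h)$ contribution, followed by \eqref{4.66fl}--\eqref{4.66gl}, shape regularity, Young's inequality with kick-back, and integration over $I_i$. The adjoint case and the extra $\|y-\hat{Y}_h\|$ term are handled exactly as in the paper, so no substantive difference remains.
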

\begin{proof} Setting  $\xi= \varPsi \mathpzc{b}_E $ with $ \varPsi =\sjump{\nabla \hat{Y}_h}$ in \eqref{4.66dl}, and recalling that  $\sjump{\nabla y}=0$ on the interior edges, we integrate over the two elements comprising $\omega_E$ to obtain, for $t\in (t_{i-1},t_i],\; i\in[1:N_T]$,
\begin{align}
h_E \| \sjump{ \nabla \hat{Y}_h }\|^2_{L^2(E)}&\leq~	c_{5,4} h_E(\sjump{ \nabla \hat{Y}_h },\xi)_{L^2(E)}=c_{5,4} h_E(\sjump{ \nabla \hat{Y}_h }-\sjump{\nabla y},\xi)_{L^2(E)}\nonumber\\
&=~ c_{5,4} h_E(\hat{f}_h-Y_{h,t}+\Delta \hat{Y}_h-a_{0,h}\hat{Y}_{h},\xi)_{L^2(\omega_E)}+c_{5,4} h_E(f-y_t-a_0y,\xi)_{L^2(\omega_E)}\nonumber\\&\hspace{0.5cm}+~c_{5,4} h_E(\nabla (\hat{Y}_h- y),\nabla \xi)_{L^2(\omega_E)}-c_{5,4} h_E(\hat{f}_h-Y_{h,t}-a_{0,h}\hat{Y}_{h},\xi)_{L^2(\omega_E)}\nonumber\\
&=~ c_{5,4} h_E(\hat{f}_h-Y_{h,t}+\Delta \hat{Y}_h-a_{0,h}\hat{Y}_{h},\xi)_{L^2(\omega_E)}+c_{5,4} h_E(f-\hat{f}_h,\xi)_{L^2(\omega_E)}\nonumber\\&\hspace{0.5cm}+~c_{5,4} h_E(Y_{h,t}-y_t,\xi)_{L^2(\omega_E)}+c_{5,4} h_E((a_{0,h}-a_0)\hat{Y}_{h},\xi)_{L^2(\omega_E)}\nonumber\\&\hspace{0.5cm}+~c_{5,4} h_E(a_0(\hat{Y}_{h}-y),\xi)_{L^2(\omega_E)}+c_{5,4} h_E(\nabla (\hat{Y}_h- y),\nabla \xi)_{L^2(\omega_E)}.\nonumber
\end{align}
By invoking the PDE \eqref{contstate} and utilizing the inequalities \eqref{4.66fl}-\eqref{4.66gl} with the mesh shape regularity condition $(h_E/h_K\leq \beta_0$ with $\beta_0>1$), we derive
\begin{align}
h_E \| \sjump{ \nabla \hat{Y}_h }\|^2_{L^2(E)}&\leq~ c_{5,4}  \max\{\beta_0c_{5,6},c_{5,7}\} h^{1/2}_E\|\varPsi\|_{L^2(E)} \times \big[h_K^2 \big( \|\hat{f}_h-Y_{h,t}+\Delta \hat{Y}_h-a_{0,h}\hat{Y}_{h}\|^2_{L^2(\omega_E)}\nonumber\\&\hspace{0.5cm}+ \|f-\hat{f}_h\|^2_{L^2(\omega_E)}+ \|\partial_t(y-Y_h)\|^2_{L^2(\omega_E)} + \|(a_{0,h}-a_0)\hat{Y}_{h}\|_{L^2(\omega_E)}\nonumber\\&\hspace{0.5cm}+ \|a_0(\hat{Y}_{h}-y)\|^2_{L^2(\omega_E)} \big)+\| \nabla(\hat{Y}_h- y)\|^2_{L^2(\omega_E)}\big]^{1/2}.\nonumber
\end{align}
The Young's inequality followed by kick-back arguments, and then integrating over $t_{i-1}$ to $t_i$ with respect to time, directly yields the desired estimate \eqref{4.68al}.\smallskip

To establish the inequality \eqref{4.68b}, we proceed 
by choosing $\zeta= \sjump{\nabla \tilde{Z}_h} \mathpzc{b}_E$ and utilize \eqref{4.66dl}, inequalities \eqref{4.66fl} and \eqref{4.66gl}. Then, arguing similar to above, for $t\in (t_{i-1},t_i],\; i\in[1:N_T]$, we arrive at
\begin{align}
&h_E \| \sjump{ \nabla \tilde{Z}_h }\|^2_{L^2(E)}~\leq~ c_{5,4}  \max\{\beta_0c_{5,6},c_{5,7}\} h^{1/2}_E\|\sjump{ \nabla \tilde{Z}_h }\|_{L^2(E)}\times \big[h_K^2 \big\{ \|\hat{Y}_h-\hat{y}_{d,h}+Z_{h,t}\nonumber\\&\hspace{1cm}+\Delta \tilde{Z}_h-a_{0,h}\tilde{Z}_{h}\|^2_{L^2(\omega_E)}+ \|y-\hat{Y}_h\|^2_{L^2(\omega_E)}+ \|\hat{y}_{d,h}-y_d\|^2_{L^2(\omega_E)}+ \|\partial_t(z-Z_{h})\|^2_{L^2(\omega_E)} \nonumber\\&\hspace{1cm}+ \|(a_{0,h}-a_0)\tilde{Z}_{h}\|_{L^2(\omega_E)}+ \|a_0(\tilde{Z}_{h}-z)\|^2_{L^2(\omega_E)} \big\}+\| \nabla(\tilde{Z}_h- z)\|^2_{L^2(\omega_E)}\big]^{1/2}.\nonumber
\end{align}
An application of the Young's inequality, and then integrating over $t_{i-1}$ to $t_i$ with respect to time, yields the desired result, this concludes the proof of\eqref{4.68b}.
\end{proof} 
\begin{lemma}
Let $(y,z,q)$ and $(Y_h, Z_h,Q_h)$ be the solutions of \eqref{weakformstate}--\eqref{weakformcontrol} and \eqref{redisoptstate12}-- \eqref{redisfirstoptcond12}, respectively. Further, for any $E_N\in \partial K,\, K\in \mathscr{T}^i_h, i\in [1:N_T]$, let the error estimators $\eta_{y,E_N}^i$, $\eta_{z,E_N}^i$ and the data oscillations $\varTheta^i_{y,K}$, $\varTheta^i_{y,E_N}$, $\varTheta^i_{z,K}, \;\varTheta^i_{z,E_N}$ be defined as in \eqref{4.12yen}-\eqref{4.13en} and\eqref{dataest4.17}, respectively.  Then there exist  positive constants $a_{5,3}$ and $a_{5,4}$ such that the following estimates hold, for each $t\in (t_{i-1},t_i],\; i\in[1:N_T]$,   
\begin{subequations}
\begin{align}
(\eta_{y,E_N}^i)^2&\leq~a_{5,3}\Big(|\|y-Y_h|\|_{L^2(I_i;K)}^2+(\eta^i_{y,K})^2+(\varTheta^i_{y,K})^2+(\varTheta^i_{y,E_N})^2+\|\partial_t(y-Y_{h})\|^2_{L^2(I_i;L^2(K))}\nonumber\\&\hspace{1.5cm}+\|q-Q_h\|^2_{L^2(I_i;L^2(E_N))} \Big),\label{4.69ayen} \displaybreak[0] \\
(\eta_{z,E_N}^i)^2&\leq~ a_{5,4}\Big(|\|z-Z_{h}|\|^2_{L^2(I_i;K)}+(\eta^i_{y,K})^2+(\varTheta^i_{z,K})^2+(\varTheta^i_{z,E_N})^2+\|\partial_t(z-Z_{h})\|^2_{L^2(I_i;L^2(K))}\nonumber\\&\hspace{1.5cm}+\|y-\hat{Y}_h\|^2_{L^2(I_i;L^2(K))}\Big), \label{4.69bzen}
\end{align}
\end{subequations}
where $a_{5,3}=a_{5,4}= 2c^2_{5,4}\beta^2_2$ with $\beta_2=\max\{c_{5,5},\beta_0c_{5,6},c_{5,7} \}$.
\end{lemma}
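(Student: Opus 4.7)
The plan is to mirror the bubble-function technique used in the two preceding lemmas, but localized to a boundary patch. For the state residual $\varPhi_y := Q_h + \hat{\mathcal{G}}_{N,h} - \mathbf{n}_E\cdot\nabla\hat{Y}_h$ on $E_N \subset \partial K$, I first use the Neumann boundary condition $\mathbf{n}\cdot\nabla y = q + g_N$ on $\Gamma_N$ to rewrite
\begin{equation*}
\varPhi_y = (Q_h - q) + (\hat{\mathcal{G}}_{N,h} - g_N) + \mathbf{n}_E\cdot\nabla(y - \hat{Y}_h).
\end{equation*}
Then I set $\xi_y = \varPhi_y\,\mathpzc{b}_{E_N}$, invoke \eqref{4.66dl} to get $\|\varPhi_y\|^2_{L^2(E_N)} \le c_{5,4}(\varPhi_y,\xi_y)_{E_N}$, and split the pairing into three contributions coming from the three summands above.

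The first two boundary pairings are estimated directly by Cauchy--Schwarz together with the $L^2$-bound \eqref{4.66el} on $\xi_y$, producing the control error $\|q-Q_h\|_{L^2(E_N)}$ and the data oscillation $\varTheta^i_{y,E_N}$ after multiplying by $h_{E_N}$. For the third term I use integration by parts on the single element $K$ (noting that $\mathpzc{b}_{E_N}$ vanishes on the remaining edges of $K$):
\begin{equation*}
(\mathbf{n}_E\cdot\nabla(y-\hat{Y}_h),\xi_y)_{E_N} = (\nabla(y-\hat{Y}_h),\nabla\xi_y)_K + (\Delta(y-\hat{Y}_h),\xi_y)_K.
\end{equation*}
Replacing $\Delta y$ via the strong state equation $\Delta y = y_t + a_0 y - f$ and adding/subtracting the discrete counterparts $Y_{h,t}, a_{0,h}\hat{Y}_h, \hat{f}_h$, the interior term decomposes into the element residual $\eta^i_{y,K}$, the local state error, the temporal derivative error $\|\partial_t(y-Y_h)\|_{L^2(K)}$, and the oscillation $\varTheta^i_{y,K}$. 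The gradient pairing is controlled by $\|\nabla(y-\hat{Y}_h)\|_{L^2(K)} \le |\|y-Y_h|\|_{K}$ multiplied by $\|\nabla\xi_y\|_{L^2(K)}$, which by \eqref{4.66gl} gives a factor $h_E^{-1/2}\|\varPhi_y\|_{L^2(E_N)}$. Multiplying throughout by $h_{E_N}$, applying Young's inequality in the form $ab\le \tfrac{1}{2}a^2 + \tfrac{1}{2}b^2$ to absorb the resulting $h_{E_N}\|\varPhi_y\|^2_{L^2(E_N)}$ factors back into the left-hand side (kick-back), and finally integrating over $(t_{i-1},t_i]$ yields \eqref{4.69ayen}.

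The adjoint estimate \eqref{4.69bzen} follows by the same template with $\varPhi_z := \hat{\mathcal{R}}_{N,h} - \mathbf{n}_{E_N}\cdot\nabla\tilde{Z}_h$. Here the adjoint Neumann condition $\mathbf{n}\cdot\nabla z = r_N$ gives $\varPhi_z = (\hat{\mathcal{R}}_{N,h}-r_N) + \mathbf{n}_{E_N}\cdot\nabla(z-\tilde{Z}_h)$, so no control term appears. After setting $\xi_z = \varPhi_z\,\mathpzc{b}_{E_N}$ and integrating by parts, I use the adjoint PDE $-z_t - \Delta z + a_0 z = y - y_d$ to express $\Delta z$, add/subtract discrete quantities $Z_{h,t}, a_{0,h}\tilde{Z}_h, \hat{y}_{d,h}$, and recognize the element residual $\eta^i_{z,K}$, the oscillations $\varTheta^i_{z,K}$ and $\varTheta^i_{z,E_N}$, the temporal derivative $\|\partial_t(z-Z_h)\|_{L^2(K)}$, and the state error $\|y-\hat{Y}_h\|_{L^2(K)}$ (which arises because the adjoint forcing involves $y$). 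The bubble function properties \eqref{4.66el}, \eqref{4.66fl}, \eqref{4.66gl} together with Young and kick-back produce the claimed bound after time integration.

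The main technical obstacle is bookkeeping: correctly tracking which terms produce local errors (measured in the energy norm restricted to $K$) versus residual estimators versus oscillations, and ensuring the powers of $h_E$ and $h_K$ balance via the shape-regularity assumption $h_E/h_K\le \beta_0$. In particular, the constants $a_{5,3}=a_{5,4}=2c^2_{5,4}\beta_2^2$ with $\beta_2 = \max\{c_{5,5},\beta_0 c_{5,6}, c_{5,7}\}$ should emerge naturally from the combined use of \eqref{4.66el}, \eqref{4.66fl}, \eqref{4.66gl} when aggregating the Cauchy--Schwarz bounds; the step requiring most care is absorbing the $\|\nabla\xi\|$-contributions, where the $h_E^{-1/2}$ blow-up of the bubble gradient must be cancelled precisely by the prefactor $h_{E_N}$ to keep the final estimate dimensionally consistent.
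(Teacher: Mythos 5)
Your proposal is correct and follows essentially the same route as the paper: edge bubble functions with \eqref{4.66dl}, insertion of the exact Neumann data $\mathbf{n}\cdot\nabla y=q+g_N$ (resp. $\mathbf{n}\cdot\nabla z=r_N$), Green's formula on the single element $K$ combined with the strong state/adjoint PDE, the bubble bounds \eqref{4.66el}--\eqref{4.66gl} with shape regularity $h_E/h_K\le\beta_0$, and Young's inequality plus kick-back before integrating over $I_i$. Your identification of the element residual in the adjoint bound as $\eta^i_{z,K}$ matches the paper's proof (the $\eta^i_{y,K}$ appearing in the stated inequality \eqref{4.69bzen} is evidently a typo).
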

\begin{proof}
First, we prove the estimate \eqref{4.69ayen}. To begin, we use the inequality \eqref{4.66dl} and  set $\xi=\varPsi  \mathpzc{b}_E$ with $\varPsi=\mathbf{n}_E\cdot \nabla \hat{Y}_h-Q_h-\hat{\mathcal{G}}_{N,h}$, to obtain,  for $t\in (t_{i-1},t_i],\; i\in[1:N_T]$, 
 \begin{align}
 &h_{E_N}\|Q_h+\hat{\mathcal{G}}_{N,h}-\mathbf{n}_E\cdot \nabla \hat{Y}_h\|^2_{L^2(E_N)}\leq~ c_{5,4} h_{E_N} (\mathbf{n}_E\cdot \nabla \hat{Y}_h-Q_h-\hat{\mathcal{G}}_{N,h}, \xi)_{L^2(E_N)} \nonumber\\&\hspace{0.6cm}=~ c_{5,4}h_{E_N}\big[ (\mathbf{n}_E\cdot \nabla (\hat{Y}_h-y), \xi)_{L^2(E_N)}+(q-Q_h,\xi)_{L^2(E_N)}\nonumber\\&\hspace{0.6cm}+(g_N-\hat{\mathcal{G}}_{N,h},\xi)_{L^2(E_N)}\big].
\end{align}
Since $\mathbf{n}_E\cdot \nabla y=q+g_N$ on the edge of the Neumann boundary.
Invoking Green's inequality on the element  $K$ that contains $E_N\subset K$, and incorporating equation \eqref{contstate}, we find
\begin{align}
&h_{E_N}\|Q_h+\hat{\mathcal{G}}_{N,h}-\mathbf{n}_E\cdot \nabla \hat{Y}_h\|^2_{L^2(E_N)}\leq~ c_{5,4}h_{E_N}\big[ (\hat{f}-Y_{h,t}+\Delta \hat{Y}_h-a_{0,h}\hat{Y}_h, \xi)_{L^2(K)}\nonumber\\&\hspace{0.6cm}+(f-\hat{f}_h,\xi)_{L^2(K)}+(Y_{h,t}-y_t,\xi)_{L^2(K)}+((a_{0,h}-a_0)\hat{Y}_{h},\xi)_{L^2(K)}+(a_0(\hat{Y}_{h}-y),\xi)_{L^2(K)}\nonumber\\&\hspace{0.6cm}+c_{5,4} h_E(\nabla (\hat{Y}_h- y),\nabla \xi)_{L^2(K)}+(q-Q_h,\xi)_{L^2(E_N)}+(g_N-\hat{\mathcal{G}}_{N,h},\xi)_{L^2(E_N)}\big].
\end{align}
By leveraging the bounds provided by inequalities \eqref{4.66el}-\eqref{4.66gl}, we deduce
\begin{align}
&h_{E_N}\|Q_h+\hat{\mathcal{G}}_{N,h}-\mathbf{n}_E\cdot \nabla \hat{Y}_h\|^2_{L^2(E_N)}\leq~ c_{5,4}h^{1/2}_{E_N}\max\{c_{5,5},\beta_0c_{5,6},c_{5,7} \}\|\varPsi\|_{L^2(E_N)}\big[|\|y-Y_h|\|^2_{K}\nonumber\\	&\hspace{0.4cm}+ (\eta^i_{y,K})^2+(\varTheta_{y,K}^i)^2+(\varTheta^i_{y,E_N})^2+\|\partial_t(y-Y_{h})\|^2_{L^2(K)}+\|q-Q_h\|^2_{L^2(E_N)}\big].
\end{align}
By invoking Young's inequality and employing a kickback argument, then integrating from $t_{i-1}$ to $t_i$ with respect to $t$, we establish the desired inequality \eqref{4.69ayen}.\smallskip

We now turn our attention to establishing inequality \eqref{4.69bzen}. From the inequality \eqref{4.66dl}, and  by setting $\zeta= (\mathbf{n}_{E_N}\cdot \nabla \tilde{Z}_h-\hat{\mathcal{R}}_{N,h})  \mathpzc{b}_E$, analogously, we have,  for $t\in (t_{i-1},t_i],\; i\in[1:N_T]$, 
\begin{align}
&h_{E_N}\|\hat{\mathcal{R}}_{N,h}-\mathbf{n}_{E_N}\cdot \nabla \tilde{Z}_h\|^2_{L^2(E_N)}\leq~ c_{5,4} h_{E_N} (\mathbf{n}_{E_N} \cdot \nabla \tilde{Z}_h-\hat{\mathcal{R}}_{N,h}, \zeta)_{L^2(E_N)} \nonumber\\&\hspace{0.6cm}=~ c_{5,4}h_{E_N}\big[ (\mathbf{n}_E\cdot \nabla (\tilde{Z}_h-z), \zeta)_{L^2(E_N)}+(r_{N}-\hat{\mathcal{R}}_{N,h},\zeta)_{L^2(E_N)}\big].\nonumber
\end{align}  
Since $\mathbf{n}_{E_N} \cdot \nabla \tilde{Z}_h=r_{_N}$. Use of the Green's inequality on element $K$ with $E_N\subset K$, and \eqref{2.14adjoint-state} yields
\begin{align}
&h_{E_N}\|\hat{\mathcal{R}}_{N,h}-\mathbf{n}_{E_N}\cdot \nabla \tilde{Z}_h\|^2_{L^2(E_N)}\leq~ c_{5,4} h_E\big[(\hat{Y}_h-\hat{y}_{d,h}+Z_{h,t}+\Delta \tilde{Z}_h-a_{0,h}\tilde{Z}_{h},\zeta)_{L^2(K)}\nonumber\\&\hspace{0.7cm}+(y-\hat{Y}_h,\zeta)_{L^2(K)}+(\hat{y}_{d,h}-y_d,\zeta)_{L^2(K)}+(z_t-Z_{h,t},\zeta)_{L^2(K)}+((a_{0,h}-a_0)\tilde{Z}_{h},\zeta)_{L^2(K)}\nonumber\\&\hspace{0.7cm}+(a_0(\tilde{Z}_{h}-z),\zeta)_{L^2(K)}+(\nabla (\tilde{Z}_h- z),\nabla \zeta)_{L^2(K)}+(r_{N}-\hat{\mathcal{R}}_{N,h},\zeta)_{L^2(E_N)}\big].\nonumber
\end{align} 
Applying the inequalities \eqref{4.66el}-\eqref{4.66gl}, we get
\begin{align}
&h_{E_N}\|\hat{\mathcal{R}}_{N,h}-\mathbf{n}_{E_N}\cdot \nabla \tilde{Z}_h\|^2_{L^2(E_N)}\leq~ c_{5,4} \max\{c_{5,5},\beta_0c_{5,6},c_{5,7}\}\, h^{1/2}_E\,\|\hat{\mathcal{R}}_{N,h}-\mathbf{n}_{E_N}\cdot \nabla \tilde{Z}_h\|_{L^2(E_N)} \nonumber\\~&\hspace{0.7cm}\times \big[|\|z-Z_{h}|\|^2_{K}+(\eta^i_{z,K})^2+(\varTheta^i_{z,K})^2+(\varTheta^i_{z,E_N})^2+\|\partial_t(z-Z_{h})\|^2_{L^2(K)}+\|y-\hat{Y}_h\|^2_{L^2(K)}\big]^{\frac{1}{2}}.\nonumber
\end{align} 
An application of the Young's inequality, and then integrating from $t_{i-1}$ to $t_i$ with respect to $t$, this yields the proof of the inequality \eqref{4.69bzen}.
\end{proof}
Our next step is to obtain estimates for the control estimator, with a particular emphasis on controlling the discretization error associated with the control variables.
\begin{lemma}\label{lm47etqelt} 
Let $(y,z,q)$ and $(Y_h, Z_h,Q_h)$ be the solution of \eqref{weakformstate}--\eqref{weakformcontrol} and \eqref{redisoptstate12}--\eqref{redisfirstoptcond12}, respectively. Assume that all the conditions of the lemma \ref{lm4.1int}. Further, for any $E_N\in \partial K,\, K\in \mathscr{T}^i_h, i\in [1:N_T]$, let the error estimators $\eta_{q,E_N}^i$ and $\varTheta^i_{q}$ be defined as in \eqref{4.14444qen}  and \eqref{4.17qen}, respectively.  Then there exist a positive constant $a_{5,5}$ such that the following estimates hold, for each $t\in (t_{i-1},t_i],\; i\in[1:N_T]$,
\begin{subequations}
\begin{align}
&(\eta_{q,E_N}^i)^2~\leq~a_{5,5}\Big(\|q-Q_h\|^2_{L^2(I_i;L^2(\Gamma_N))}+|\|z-Z_h|\|_{L^2(I_i;K)}^2+(\varTheta^i_{q})^2\nonumber\\& \hspace{2cm}+h^2_{E_N} \|(\mathbf{n}_{E_N}\cdot \nabla(\alpha \left(Q_h-\hat{q}_{d,h}\right)+\tilde{Z}_h))\chi_{_{\mathpzc{A}_h}}\|^2_{L^2(I_i;L^2(E_N))}\Big),\label{4.74qen}
\end{align}
\end{subequations}
where $a_{5,5}=\max\{1,c_{inv}^2\}$, and $\mathpzc{A}_h$ denotes the active set  such that $\mathpzc{A}_h=\mathpzc{A}_{a,h}\cup \mathpzc{A}_{b,h}$.
\end{lemma}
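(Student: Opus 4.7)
The plan is to split the estimator over $E_N$ into its active and inactive portions and handle each separately. Define the quantity $\omega_h := \alpha(Q_h - \hat{q}_{d,h}) + \tilde{Z}_h$ entering \eqref{4.14444qen} and its continuous counterpart $\omega := \alpha(q - q_d) + z$. From the discrete optimality \eqref{redisopt11}, $\hat{\mu}_h + \omega_h = 0$, hence $\omega_h \equiv 0$ on the discrete inactive set $\mathcal{I}_h$. Analogously \eqref{optmallitycon1} gives $\omega \equiv 0$ on the continuous inactive set $\mathcal{I}$ where $\mu = 0$. Splitting the integrand pointwise by $\chi_{\mathpzc{A}_h} + \chi_{\mathcal{I}_h} = 1$ on $E_N \times I_i$ yields
\begin{align*}
(\eta_{q,E_N}^i)^2 &= \int_{t_{i-1}}^{t_i} h_{E_N}^2 \|(\mathbf{n}_{E_N}\cdot \nabla \omega_h)\chi_{\mathpzc{A}_h}\|^2_{L^2(E_N)}\,dt \\
&\quad + \int_{t_{i-1}}^{t_i} h_{E_N}^2 \|(\mathbf{n}_{E_N}\cdot \nabla \omega_h)\chi_{\mathcal{I}_h}\|^2_{L^2(E_N)}\,dt,
\end{align*}
and the first integral is exactly the final term of \eqref{4.74qen}, so only the inactive contribution requires further work.

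For the inactive contribution I would add and subtract $\omega$, writing $\nabla \omega_h = \nabla(\omega_h - \omega) + \nabla \omega$. On $\mathcal{I}_h \cap \mathcal{I}$ both $\omega_h$ and $\omega$ vanish, so only the residual $\nabla(\omega_h - \omega) = \alpha\nabla(Q_h - q) - \alpha\nabla(\hat{q}_{d,h} - q_d) + \nabla(\tilde{Z}_h - z)$ survives. On the transition region $\mathcal{I}_h \setminus \mathcal{I}$ (and symmetrically $\mathcal{I} \setminus \mathcal{I}_h$), the complementarity systems \eqref{compcond2}--\eqref{compcond1} and \eqref{discompcondi1}--\eqref{discompcondi2} pin $Q_h$ to a constraint bound $\hat{q}_{a,h}$ or $\hat{q}_{b,h}$ while $q$ sits strictly in the interior (or vice versa), so the pointwise discrepancy is dominated by $|q - Q_h|$ together with the constraint oscillations $|q_a - \hat{q}_{a,h}| + |q_b - \hat{q}_{b,h}|$ already contained in \eqref{4.17qen}. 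Next I would apply the inverse estimate \eqref{inversinq} to the piecewise linear polynomial $\omega_h - \omega$ on $E_N$ to convert $h_{E_N}^2\|\mathbf{n}_{E_N}\cdot\nabla(\omega_h - \omega)\|_{L^2(E_N)}^2$ into $c_{inv}^2 \|\omega_h - \omega\|_{L^2(E_N)}^2$, and then use the trace inequality \eqref{tracegeninq} to absorb $\|\tilde{Z}_h - z\|_{L^2(E_N)}$ into the element-local energy norm $|\|z - Z_h|\|_{L^2(I_i; K)}$ on the element $K$ containing $E_N$.

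The main obstacle I anticipate is the careful bookkeeping on the set mismatch $\mathcal{I}_h \setminus \mathcal{I}$: there the clean identity $\omega = \omega_h = 0$ fails and one must exploit the sign structure in \eqref{compcond2}--\eqref{compcond1} and \eqref{discompcondi1}--\eqref{discompcondi2} to bound the local pointwise gap by $|q - Q_h|$ plus constraint data oscillations. Assembling the active and inactive pieces and integrating in time over $I_i$ delivers \eqref{4.74qen} with the claimed constant $a_{5,5} = \max\{1, c_{inv}^2\}$, where the factor $1$ comes from the active part passed through unchanged and $c_{inv}^2$ from the inverse inequality on the inactive part.
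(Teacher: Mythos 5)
Your proposal follows essentially the same route as the paper's proof: split the estimator by $\chi_{\mathpzc{A}_h}+\chi_{\mathcal{I}_h}$, pass the active part through unchanged, use the continuous and discrete optimality conditions so that the quantity $\alpha(q-q_d)+z$ can be subtracted on the inactive set, apply the inverse inequality \eqref{inversinq} to remove the factor $h_{E_N}\nabla$, and collect the terms $\|q-Q_h\|$, $\|z-\tilde{Z}_h\|$ and the data oscillation, giving $a_{5,5}=\max\{1,c_{inv}^2\}$. The only deviation is your extra (sketched) treatment of the mismatch region $\mathcal{I}_h\setminus\mathcal{I}$ and the trace-inequality step from $\|z-\tilde{Z}_h\|_{L^2(E_N)}$ to $|\|z-Z_h|\|_{L^2(I_i;K)}$, both of which the paper simply sidesteps by asserting $(\alpha(q-q_d)+z)\chi_{_{\mathcal{I}_h}}=0$ and stating that "the desired result follows," so your core argument coincides with the paper's.
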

\begin{proof}
From \eqref{4.14444qen}, we have, for $t\in I_i,\; i\in[1:N_T]$,
\begin{align}
(\eta^i_{q,E_N})^2 = \int_{t_{i-1}}^{t_i}h^2_{E_N} \|\mathbf{n}_{E_N}\cdot \nabla(\alpha \left(Q_h-\hat{q}_{d,h}\right)+\tilde{Z}_h)\|^2_{L^2(E_N)}\,dt.\nonumber
\end{align}
Using \eqref{2.28contcompcond} with indicator function $\chi$, we have $(\alpha \left(q-q_d\right)+z)\chi_{_{\mathcal{I}_h}}=0$. Combining this with the inverse inequality \eqref{inversinq}, we deduce
\begin{align}
(\eta^i_{q,E_N})^2 &\leq\int_{t_{i-1}}^{t_i}\big[ h^2_{E_N} \|(\mathbf{n}_{E_N}\cdot \nabla(\alpha \left(Q_h-\hat{q}_{d,h}\right)+\tilde{Z}_h))\chi_{_{\mathcal{I}_h}}\|^2_{L^2(E_N)}\nonumber\\& \hspace{0.5cm}+h^2_{E_N} \|(\mathbf{n}_{E_N}\cdot \nabla(\alpha \left(Q_h-\hat{q}_{d,h}\right)
+\tilde{Z}_h))\chi_{_{\mathpzc{A}_h}}\|^2_{L^2(E_N)}\big] \, dt\nonumber\\
&\leq c_{inv}^2\int_{t_{i-1}}^{t_i}\big[  \|(\alpha \left(Q_h-q\right)\|^2_{L^2(E_N)}+\|z-\tilde{Z}_h\|^2_{L^2(E_N)}+\|(\alpha \left(q_d-\hat{q}_{d,h}\right)\|^2_{L^2(E_N)}\nonumber\\
&\hspace{0.5cm}+h^2_{E_N} \|(\mathbf{n}_{E_N}\cdot \nabla(\alpha \left(Q_h-\hat{q}_{d,h}\right)+\tilde{Z}_h))\chi_{_{\mathpzc{A}_h}}\|^2_{L^2(E_N)}]\, dt\nonumber\\
&\leq \max\{1,c_{inv}^2\} \int_{t_{i-1}}^{t_i}\big[\|q-Q_h|^2_{L^2(E_N)}+\|z-\tilde{Z}_h\|^2_{L^2(E_N)}+(\varTheta^i_{q,E_N})^2\nonumber\\
&\hspace{0.5cm}+h^2_{E_N} \|(\mathbf{n}_{E_N}\cdot \nabla(\alpha \left(Q_h-\hat{q}_{d,h}\right)+\tilde{Z}_h))\chi_{_{\mathpzc{A}_h}}\|^2_{L^2(E_N)}\big]\,dt. 
\end{align}
Thus, the desired result follows.
\end{proof}
We are now well placed to establish a reliable estimate for $\Upsilon_{yzq}$, which includes local error components, data oscillations, and active control contributions inherent to local error estimators. The forthcoming result underscores the efficiency of these estimators.
\begin{theorem}\label{4.8theorem}
Let $(y,z,q)$ and $(Y_h, Z_h,Q_h)$ be the solutions of \eqref{weakformstate}--\eqref{weakformcontrol} and \eqref{redisoptstate12}--\eqref{redisfirstoptcond12}, respectively. In addition, let the estimator $\Upsilon_{yzq}$ and the data oscillation $\varTheta_{yzq}$ be defined as in \eqref{upsilon} and \eqref{vartheta}, respectively. In addition, we assume that all the conditions of Lemma \ref{lm4.1int} are satisfied. Then there exists a positive constant $a_{5,6}$ such that the following inequality holds,
\begin{align}
&\Upsilon_{yzq}\leq~ a_{5,6}\big[\|q-Q_h\|_{L^2(I; L^2(\Gamma_{_N}))}+|\|y-Y_h|\|_{L^2(I; \mathscr{E})}+|\|z-Z_h|\|_{L^2(I; \mathscr{E})}\nonumber\\&\hspace{1cm}+\|\partial_t(y-Y_{h})\|_{L^2(I; L^2(\Omega))}+\|\partial_t(z-Z_{h})\|_{L^2(I; L^2(\Omega))}+\varTheta_{yzq}+ \Xi_{T_{yz}}\nonumber\\&\hspace{1cm}+\sum_{i=1}^{N_T}\sum_{E\in \mathcal{E}^i_{N,h}} h_{E_N} \|(\mathbf{n}_{E_N}\cdot \nabla(\alpha \left(Q_h-\hat{q}_{d,h}\right)+\tilde{Z}_h))\chi_{_{\mathpzc{A}_h}}\|_{L^2(I_i;L^2(E_N))} \big]. \label{mainine475}
\end{align}
\end{theorem}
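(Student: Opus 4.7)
The plan is to unpack the definition of $\Upsilon_{yzq}$ in \eqref{sumofest4.19}--\eqref{upsilon} into its constituent contributions on each element $K\in\mathscr{T}_h^i$ and each edge $E\in\mathcal{E}_h^i$, then bound the local pieces by directly invoking the local efficiency results already established, namely Lemma \ref{lm44eltestyz} for $\eta_{y,K}^i,\eta_{z,K}^i$, the interior-edge gradient-jump lemma for $\eta_{y,E_0}^i,\eta_{z,E_0}^i$, the Neumann-edge lemma for $\eta_{y,E_N}^i,\eta_{z,E_N}^i$, and Lemma \ref{lm47etqelt} for $\eta_{q,E_N}^i$. The terms left over after this pass are exactly the solution-jump residuals $\tilde\eta_{y,E_0}^i,\tilde\eta_{z,E_0}^i$, the Dirichlet-edge residuals $\eta_{y,g_D}^i,\eta_{y,E_D}^i,\eta_{z,E_D}^i$, and the initial-data indicator $\eta_{y_0,K}$; these need a short separate argument.

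First I would square \eqref{upsilon} and sum the local bounds. Each application of Lemmas \ref{lm44eltestyz}--\ref{lm47etqelt} contributes the corresponding local energy-norm error, local data oscillation, local time-derivative error, and (for the adjoint) an $L^2$-norm of $y-\hat Y_h$, which can be further split via the triangle inequality as $\|y-Y_h\|+\|Y_h-\hat Y_h\|$, the second piece absorbed into $\varTheta_{y,T}\le\Xi_{T_{yz}}$. After summing over $K$, $E$ and over $i\in[1:N_T]$ and using the finite overlap of the patches $\omega_E$, each of these global sums is controlled by the right-hand side of \eqref{mainine475}. The Neumann-edge estimator $\eta_{q,E_N}^i$ additionally produces the active-set term $\sum_{i,E_N}h_{E_N}\|(\mathbf{n}_{E_N}\cdot\nabla(\alpha(Q_h-\hat q_{d,h})+\tilde Z_h))\chi_{\mathpzc{A}_h}\|_{L^2(I_i;L^2(E_N))}$ that appears explicitly in \eqref{mainine475}.

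For the leftover terms I would exploit the continuity of the exact solutions. Across any interior edge $E_0\in\mathcal{E}^i_{0,h}$ we have $\sjump{y}=0$, hence $\sjump{\hat Y_h}=\sjump{\hat Y_h-y}$, so
\begin{equation*}
\frac{\sigma_0^2}{h_{E_0}}\|\sjump{\hat Y_h}\|_{L^2(E_0)}^2 \le \sigma_0\,|\|\hat Y_h-y|\|_{\mathscr{E}}^2 \le 2\sigma_0\bigl(|\|y-Y_h|\|_{\mathscr{E}}^2 + |\|Y_h-\hat Y_h|\|_{\mathscr{E}}^2\bigr),
\end{equation*}
and an identical argument handles $\sjump{Y_h}$, $\sjump{\tilde Z_h}$, $\sjump{Z_h}$ in $\tilde\eta_{y,E_0}^i,\tilde\eta_{z,E_0}^i$; after time integration the second piece becomes $\varTheta_{y,T}^2$ respectively $\varTheta_{z,T}^2$, both bounded by $\Xi_{T_{yz}}^2$. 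For the Dirichlet boundary estimators, $y|_{\Gamma_D}=g_D$ gives
\begin{equation*}
\frac{\sigma_0^2}{h_{E_D}}\|\hat{\mathcal{G}}_{D,h}-\hat Y_h\|_{L^2(E_D)}^2 \lesssim \frac{\sigma_0^2}{h_{E_D}}\|g_D-\hat{\mathcal{G}}_{D,h}\|_{L^2(E_D)}^2 + \frac{\sigma_0^2}{h_{E_D}}\|y-\hat Y_h\|_{L^2(E_D)}^2,
\end{equation*}
where the first summand is absorbed in $\varTheta_{y,E_D}^2\subset\varTheta_y^2$, and the second is bounded by $|\|y-\hat Y_h|\|_{\mathscr{E}}^2$ via the boundary term already present in the energy norm; then split $\hat Y_h=Y_h+(\hat Y_h-Y_h)$ as before. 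Identical reasoning handles $\eta_{y,E_D}^i$ and $\eta_{z,E_D}^i$. Finally, the initial-data indicator $\eta_{y_0,K}^2=\|y_0-y_{h,0}\|_{L^2(K)}^2$ is itself part of the data, and after summation over $K$ fits into $\varTheta_y$ (or is bounded by $\|(y-Y_h)(0)\|$ which in turn is controlled by the global error).

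The main obstacle I anticipate is bookkeeping rather than a new technical idea: assembling all the local estimates, controlling the constants (powers of $\sigma_0$ and shape-regularity factors $\beta_0,\beta_1,\beta_2$), and in particular handling the hat/tilde substitutions $\hat Y_h\leftrightarrow Y_h$ and $\tilde Z_h\leftrightarrow Z_h$ so that every extra ``$\hat Y_h-Y_h$'' or ``$\tilde Z_h-Z_h$'' picked up by the triangle inequality lands cleanly inside $\varTheta_{y,T}^2+\varTheta_{z,T}^2=\Xi_{T_{yz}}^2$. Once this bookkeeping is done and the constants are collected into a single $a_{5,6}$, the square root of the summed inequality is precisely \eqref{mainine475}.
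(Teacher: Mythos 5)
Your proposal follows essentially the same route as the paper's proof: the paper likewise invokes the local efficiency lemmas (Lemmas \ref{lm44eltestyz}--\ref{lm47etqelt}) for the element, gradient-jump, Neumann-edge and control residuals, and then uses the regularity of the exact solutions ($\sjump{y}=0$ across interior edges, $y=g_D$ and $z=0$ on $\Gamma_D$) together with the energy-norm definition to bound the remaining jump-type and Dirichlet-edge residuals by the global errors, the temporal estimator $\Xi_{T_{yz}}$ and the data oscillations, before summing over elements, edges and time steps and collecting constants into $a_{5,6}$. Your treatment is in fact somewhat more explicit than the paper's (which compresses the leftover terms into the two displayed bounds \eqref{yhbd476}--\eqref{zhbd477} and omits the initial-data indicator), but it is the same argument.
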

\begin{proof}
Taking into account the regularity of the exact solution $y$, we observe that $\sjump{y}=0$. Further, utilizing the fact that $y=g_D$ on $\Gamma_D$ and manipulating the terms, one can easily deduce that  by using the definition of the energy norm
\begin{align}
\sum_{E \in \mathcal{E}^i_{0,h}} \frac{\sigma^2_0}{ h_{E}}\|\hat{Y}_h\|_{L^2(E)} &+ \sum_{E \in \mathcal{E}^i_{D,h}}\frac{\sigma^2_0}{ h_{E}}\|\hat{\mathcal{G}}_{D,h}-\hat{Y}_h\|_{L^2(E)}\leq~ a_{5,7}\big[|\|y-Y_h|\|_{\mathscr{E}}^2 +\varTheta^2_{y,T}\nonumber\\
&+\sum_{E \in \mathcal{E}^i_{D,h}} \frac{\sigma^2_0}{ h_{E}}  \|\hat{\mathcal{G}}_{D,h}-g_D\|^2_{L^2(E)}\big], \label{yhbd476}
\end{align}
where $a_{5,7}$ denotes the positive constant. Similarly, use of the regularity of $z$ and energy norm leads to the following error estimates 
\begin{align}
\sum_{E \in \mathcal{E}^i_{0,h}} \frac{\sigma^2_0}{ h_{E}}\|\tilde{Z}_h\|_{L^2(E)} &+ \sum_{E \in \mathcal{E}^i_{D,h}}\frac{\sigma^2_0}{ h_{E}}\|\tilde{Z}_h\|_{L^2(E)}\leq~ a_{5,8}\big[|\|z-Z_h|\|^2_{\mathscr{E}} +\varTheta^2_{z,T}\big]. \label{zhbd477}
\end{align}
 Combining all estimates from the Lemmas \ref{lm44eltestyz}-\ref{lm47etqelt} together with the estimates \eqref{yhbd476}-\eqref{zhbd477}. Then, summing over $K\in\mathscr{T}_h^i$, $i\in [1:N_T]$, and integrating from $t_{i-1}$ to $t_i$ with respect to time $t$, summing over $i=1,\,2,\ldots, N_T$, and setting the maximum $a_{5,6}$ on all the constants involved in the estimates, we obtain the desired inequality \eqref{mainine475}.
\end{proof}
\begin{remark}
 In Theorem \ref{4.8theorem}, we analyzed the efficiency part only for the space-time error estimators, while the efficiency part for the temporal terms are ignored. 
\end{remark}
\section{Numerical assessment} \label{section5555}
This section presents two numerical experiments designed to validate the efficacy of the error estimators developed in the previous section. Note that, in the first example, the results presented in Theorems \eqref{4.3theorem} and \eqref{4.8theorem} emphasize the reliability and efficiency of the estimator $\Upsilon_{yzq}$. However, the estimator $\eta_q$ reflects the approximation error of the control and may not effectively guide the localization of refinement in certain significant scenarios. In the second example, we analyze the time-dependent singularity behavior that emphasizes how effectively the estimators indicate the trajectory of the singularity at various time steps.

For the purpose of marking strategy, we define the following conditions, for $i\in [1:N]$:
\begin{subequations}\label{5.2Numsbulk}
\begin{eqnarray}
\theta \sum_{K\in \mathscr{T}_h^i} \big\{(\eta^i_{y,K})^2+(\eta^i_{z,K})^2\big\}&\leq& \sum_{K\in \mathcal{M}_K^i} \big\{(\eta^i_{y,K})^2+(\eta^i_{z,K})^2\big\},\\
\theta \sum_{E\in \mathcal{E}_h^i} \big\{(\eta^i_{y,E})^2+(\eta^i_{z,E})^2+(\eta^i_{q,E})^2\big\}&\leq& \sum_{E\in \mathcal{M}_E^i} \big\{(\eta^i_{y,E})^2+(\eta^i_{z,E})^2+(\eta^i_{q,E})^2\big\},
\end{eqnarray}
\end{subequations}
where $\theta$ be a given parameter with $0 < \theta < 1$. Moreover,   $\mathcal{M}^i_K$ and $\mathcal{M}^i_E$ are the initial subsets chosen from $\mathscr{T}^i_h$ and $\mathcal{E}^i_h$, respectively, for refinement.
In the adaptive procedure, we use the following space-time adaptive algorithm for the optimization problem \eqref{contfunc}–\eqref{contspace}. 

Consequently, enhancing this significant step numerically becomes imperative. Hence, in our numerical tests, we adopt
\begin{equation}
\bar{\eta}_q=\sum_{i=1}^{N_T}\sum_{E_N\in \mathcal{E}^i_{N,h}} h_{E_N} \|(\mathbf{n}_{E_N}\cdot \nabla(\alpha \left(Q_h-\hat{q}_{d,h}\right)+\tilde{Z}_h))\chi_{_{\mathcal{I}_h}}\|_{L^2(I_i;L^2(E_N))}\nonumber
\end{equation}
as a control indicator instead of employing $\eta_q$ as defined in equation \eqref{4.19cq}. Nevertheless, the numerical approximation of the characteristic function $\chi_{_{\mathcal{I}_h}}$ is given by:
\begin{equation}
\chi_{_{\mathcal{I}_h}} \approx \frac{(Q_h-\hat{q}_{a,h})\times (\hat{q}_{b,h}-Q_h)}{h^\mu+(Q_h-\hat{q}_{a,h})\times (\hat{q}_{b,h}-Q_h)}
\end{equation}
with $\mu>0$ \cite{liliuyan2000comp}.  However, the projection formula, is used for the control, is defined as
\begin{equation}\label{5.4proj}
P_{[q_a,q_b]}(\phi):= \max\{q_a,\min\{q_b,\phi\}\},.
\end{equation}
\begin{algorithm}[H] 
\caption{\bf{: \tt Space-time Finite Element Adaptive Algorithm }}\label{algo5.1} 
\begin{small}
\begin{flushleft}
 Given parameters $\delta_1\in (0,1), \; \delta_2>1$, $\Lambda_1 >0$, $\Lambda_2\in (0, \Lambda_1)$, the space and time tolerances $\epsilon_{space}$ and  $\epsilon_{time}$, respectively. Suppose that $(y_h^{i-1}, z_h^i, q_h^{i-1})$ is computed on the mesh $\mathscr{T}_h^{i-1}$ at the time level $t_{i-1}$ with time step size $k_{i-1}$. \vspace{.2cm}\\ 

\noindent
\textbf{Step-1}~~{\texttt{Set}} \hspace{.05cm} $\mathscr{T}_h^i:=\mathscr{T}_h^{i-1}$, $\mathcal{E}^i_h:=\mathcal{E}^{i-1}_h$,   $k_i:=k_{i-1}$ and $t_{i}:=t_{i-1}+k_i$. \vspace{.2cm}\\

 \hspace{3.3cm} Evaluate $(y_h^{i}, z_h^{i-1}, q_h^{i})$ using the discrete problem \eqref{disoptstate12}--\eqref{disfirstoptcond12} on $\mathscr{T}_h^i$ \vspace{.1cm}\\
\hspace{3.3cm} with boundary mesh $\mathcal{E}^i_h$ and the data $(y_h^{i-1}, z_h^i, q_h^{i-1})$\\
\hspace{3.3cm} Evaluate all estimators. \vspace{.2cm}\\

\noindent
\textbf{Step-2}~~{\bf\texttt{While}} \hspace{.05cm}$\Xi_{T_{yz}}\geq \Lambda_1 \frac{\epsilon_{time}}{T}$, ~~ {\bf \texttt{do}} \vspace{.1cm}\\
\hspace{3.2cm} $k_i:=\delta_1 k_{i-1},\;\; t_i:=t_{i-1}+k_{i}$ \vspace{.1cm}\\
\hspace{3.2cm} Evaluate $(y_h^{i}, z_h^{i-1}, q_h^{i})$ using the discrete problem \eqref{disoptstate12}--\eqref{disfirstoptcond12} on $\mathscr{T}_h^i$ \vspace{.1cm}\\
\hspace{3.2cm} Evaluate all estimators. \vspace{.1cm}\\
\hspace{1.35cm} {\bf \texttt{End While}} \vspace{.2cm} \\

\noindent
\textbf{Step-3}~~{\bf \texttt{While}} $\Upsilon_{yzq}>\frac{\epsilon_{space}}{T}$, or 
conditions \eqref{5.2Numsbulk} are not satisfied, {\bf \texttt{do}}\vspace{.1cm}\\
\hspace{3.5cm} Mark the subsets  $\mathcal{M}^i_K\subset \mathscr{T}^i_h$ and $\mathcal{M}^i_E\subset \mathcal{E}^i_h$ such that  $\mathcal{M}^i_K$ and $\mathcal{M}^i_E$ \vspace{.1cm}\\
\hspace{3.5cm} contain at least one element $\mathscr{T}^i_h$ and $\mathcal{E}^i_h$, respectively, \vspace{.1cm}\\
\hspace{3.5cm}  with the largest error indicators.\vspace{.2cm}\\
\hspace{3.5cm} Refine the mesh  $\mathscr{T}^i_h$ and  $\mathcal{E}^i_h$, and generate new mesh $\mathscr{T}^{i}_{\hat{h}_i}$ and  $\mathcal{E}^{i}_{\hat{h}_i}$ (say). \vspace{.1cm}\\
 \hspace{3.5cm} Evaluate $(y_h^{i}, z_h^{i-1}, q_h^{i})$ using the discrete problem \eqref{disoptstate12}--\eqref{disfirstoptcond12} on $\mathscr{T}_{\hat{h}_i}^i$  \vspace{.1cm}\\
\hspace{2.5cm} {\bf \texttt{While}}\hspace{.05cm}  $\Xi_{T_{yz}}\geq \Lambda_1 \frac{\epsilon_{time}}{T}$, {\bf \texttt{do}} \vspace{.1cm}\\
\hspace{4cm} $ k'_i:=\delta_1 k_{i-1},\;\; t_i:=t_{i-1}+k'_{i}$ \vspace{.2cm}\\
\hspace{4cm} Evaluate $(y_h^{i}, z_h^{i-1}, q_h^{i})$ using the discrete problem \eqref{disoptstate12}--\eqref{disfirstoptcond12} on $\mathscr{T}_{\hat{h}_i}^{i,k'_i}$. \vspace{.1cm}\\
\hspace{4cm} Evaluate all estimators. \vspace{.1cm}\\
\hspace{2.5cm} {\bf \texttt{End While}}\hspace{.05cm}\\
\hspace{1.8cm} {\bf \texttt{End While}}\vspace{.2cm}\\

\noindent
\textbf{Step-4}~~  {\bf \texttt{If}}\hspace{.05cm}  $\Xi_{T_{yz}}\leq \Lambda_2 \frac{\epsilon_{time}}{T}$, {\bf \texttt{do}} \vspace{0.1cm}\\
\hspace{3cm} $ k'_i:=\delta_2 k_{i-1},\;\; t_i:=t_{i-1}+k'_{i}$ \vspace{.1cm}\\
\hspace{1.7cm} {\bf \texttt{End If}}\hspace{.05cm} \vspace{.2cm}\\
\noindent
\textbf{Step-5}~~ {\tt Set} $i:=i+1$ and {\sc Go To Step 1} \vspace{.15cm}\\
\hspace{3cm} Repeat the process
\end{flushleft}
\end{small}
\end{algorithm}
\noindent
Moreover, the effectivity index (Eff. Index) is computed using the following formula:
$\text{Eff. Index} := \text{Est~Err}/\text{True~Err},$
where the estimated  and the true errors are defined by $\text{Est~Err}:=\Upsilon_{yzq} ~ \text{and}~\text{True~Err}:=|\|y-Y_h|\|_{L^2(I;\mathscr{E})}+|\|z-Z_h|\|_{L^2(I;\mathscr{E})}+\|q-Q_h\|_{L^2(I;L^2(\Gamma_{_N}))}+\|\mu-\tilde{\mu}_h\|_{L^2(I;L^2( \Gamma_{_N} ))}$, respectively.

In the computation, the regularization value $\alpha$ is assumed equal to the parameter $\gamma$ employed in defining the active and inactive sets. Additionally, equal tolerances are applied to both time and space considerations in numerical computations, i.e., $\epsilon_{time}=\epsilon_{space}=\epsilon$ (say).  
The following modified exam is taken from \cite{siebertrosch2014}.
\begin{exam}\label{ex5.1}
We consider a convex domain $\Omega=[0,3]\times[0,3]$ with boundary $\Gamma$, where the Neumann boundary $\Gamma_{_N}=\Gamma$ and the time domain $I=[0,1]$. The control $q$ is acting only on the part of the boundary $\Gamma$, say, $\bar{\Gamma}_{N}=\{0\}\times [1,2]$. However, we assume that the reaction term $a_0=1$ and the regularization parameter $\alpha =1$. Further, we choose the following  analytical solutions
\begin{align*}
&y(x, t)~=~ t\times e^{-10(x_1^2+x_2^2)}, \quad \text{where} \quad  x=(x_1,x_2),\\
&z(x, t)~=~ \frac{(1-t)\mathpzc{K}}{2n}\times\Big[(2n+1)\times\big(\frac{2x_2}{3}-1\big)-\big(\frac{2x_2}{3}-1\big)^{2n+1} \Big],\\
&q(x, t)~=~ P_{[q_a, q_b]} \big(z(x, t)\big), \quad
\mu(x, t)~=~ z(x,t)-P_{[q_a, q_b]} \big(z(x, t)\big), 
\end{align*}
and the given data as follows
\begin{align*}
&f(x,t)~=~\big\{[41-400\times (x_1^2+x_2^2)]\times t +1\big\}\times e^{-10(x_1^2+x_2^2)}, \\
&y_d(x,t)~=~ t\times e^{-10(x_1^2+x_2^2)}+\frac{(t-2)\mathpzc{K}}{2n}\times\Big[(2n+1)\times\big(\frac{2x_2}{3}-1\big)-\big(\frac{2x_2}{3}-1\big)^{2n+1} \Big]\\&\hspace{1.8cm}-\big(\frac{8n+4}{9}\big)\times(1-t)\mathpzc{K}\times\big(\frac{2x_2}{3}-1\big)^{2n-1},  \\
&g_{_N}(x,t)~=~
\begin{cases}
-P_{[q_a, q_b]} \big(z(x, t)\big), \quad (x,t)\in\bar{\Gamma}_N \times I,\\
-60\times t \times e^{-10(9+x_2^2)}, \quad x_1=3,\quad \text{and} \quad t \in I,\\
-60\times t \times e^{-10(x_1^2+9)}, \quad x_2=3,\quad \text{and} \quad t \in I,\\
\;\;\;0, \quad \text{Otherwise},
\end{cases}
\end{align*}
$\quad q_d(x,t)~=~0$ and $r_{_N}(x,t)~=~0,$ respectively, with $\mathpzc{K}=10$ and $n=20$.
\end{exam}
We provide a comprehensive analysis of errors concerning the state, adjoint-state, control and co-control variables observed on both uniform and adaptive meshes with fixing $\theta =0.40$ and the tolerances $(\epsilon=)\; 0.001$. The errors for the state and adjoint-state are evaluated in the energy norm, represented as $L^2(I;\mathscr{E})$-norm, while for the control and co-control are assessed in the $L^2(I; L^2(\Gamma))$-norm. 
Adaptive meshes (Figure \ref{meshuniadpt} (right)) are generated by employing error indicators based on an initial uniform mesh (Figure \ref{meshuniadpt} (left)) with mesh size $\tt N_x\times N_y=80\times 80$. Notably, Figure \ref{meshuniadpt} 
indicates a significant improvement in the number of elements and nodes within the adapted meshes compared to the uniform mesh. The distinct characteristics of the state, adjoint-state, and control variables require localized refinements across different regions of the domain, as illustrated in Figure \ref{meshuniadpt} (right). Due to the small exponential peak in the state variable $(y)$, adaptive meshes are particularly concentrated around the origin, as depicted in the right portion of Figure \ref{meshuniadpt}. Figure \ref{meshuniadpt} (left and right) demonstrates that the utilized error estimators effectively adapt the meshes to the pertinent areas. Figure \ref{appmeshuex} displays the profiles of the approximated state (left) and adjoint-state (right) variables on adaptive meshes at penalty 
\begin{figure}[H]
\begin{center}
\includegraphics[width=0.49\textwidth]{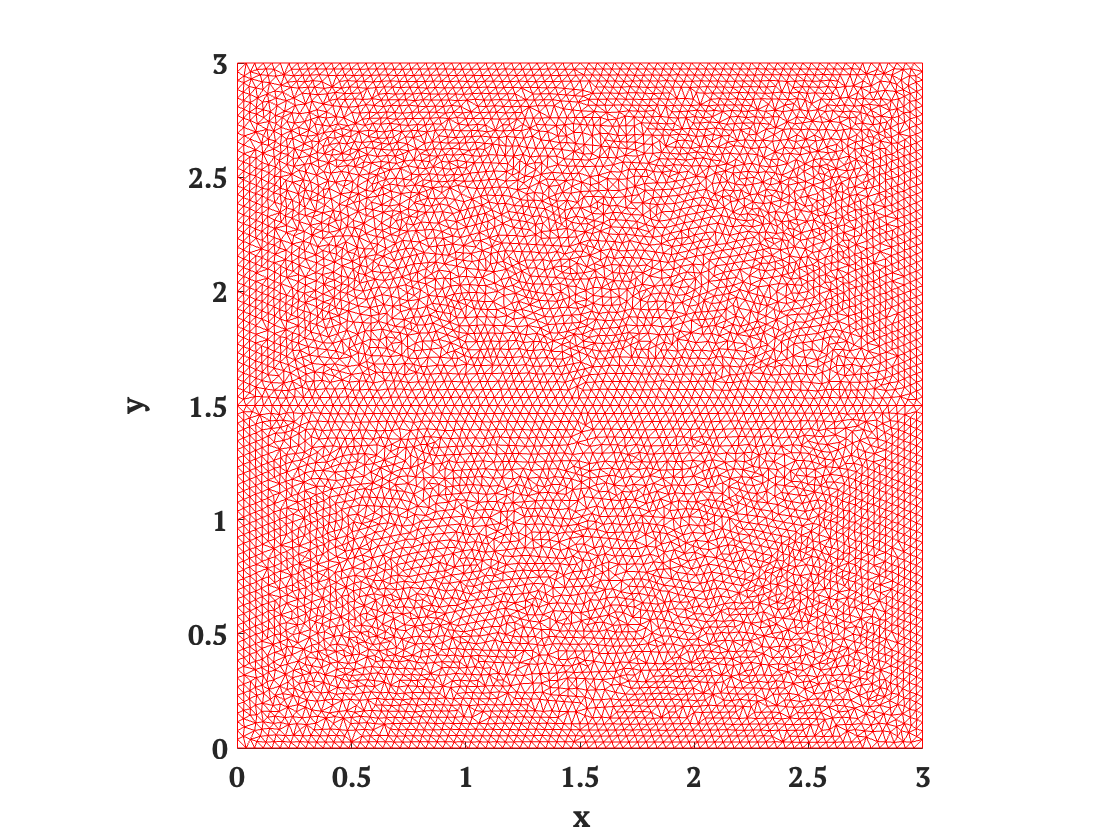}  \hfill
\includegraphics[width=0.49\textwidth]{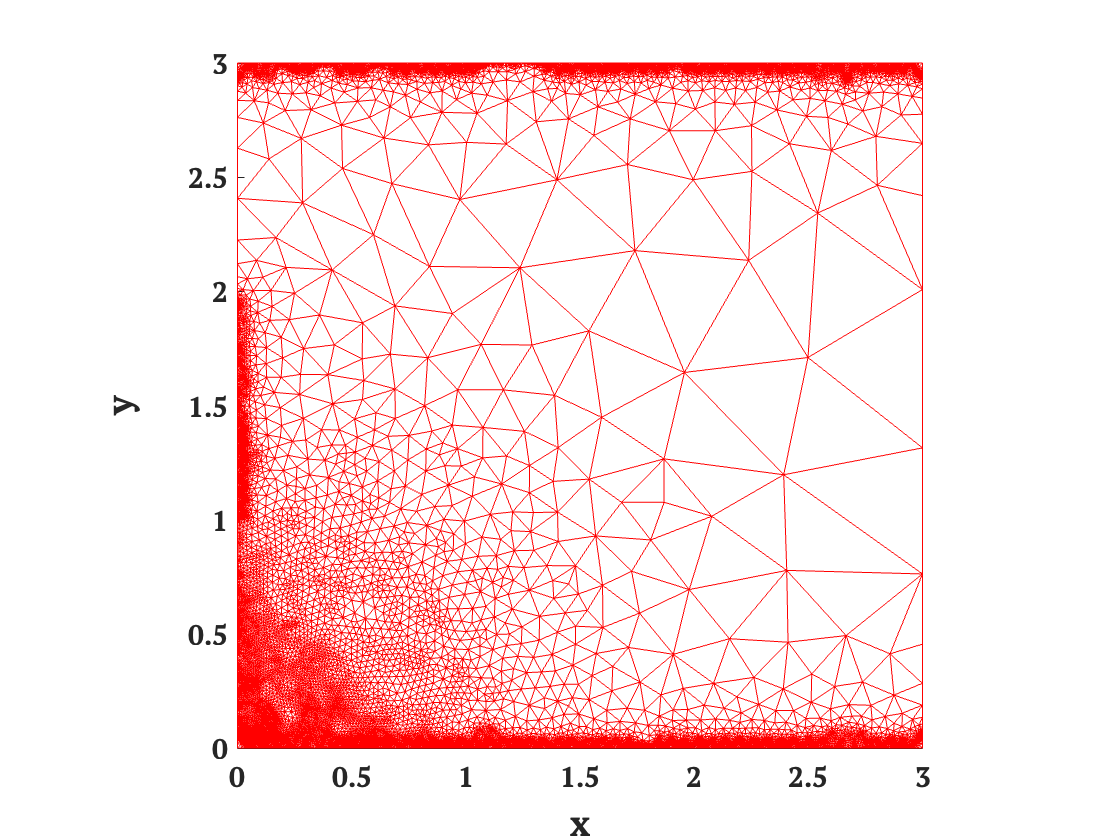}
\caption{Uniform meshes with size $80\times 80$ (NDOF=$91164$) and adaptively generated meshes at $3^{rd}$-iteration (NDOF=$333804$), respectively.}\label{meshuniadpt} \vspace{0.5cm}
\end{center}
\begin{center}
\includegraphics[width=0.49\textwidth]{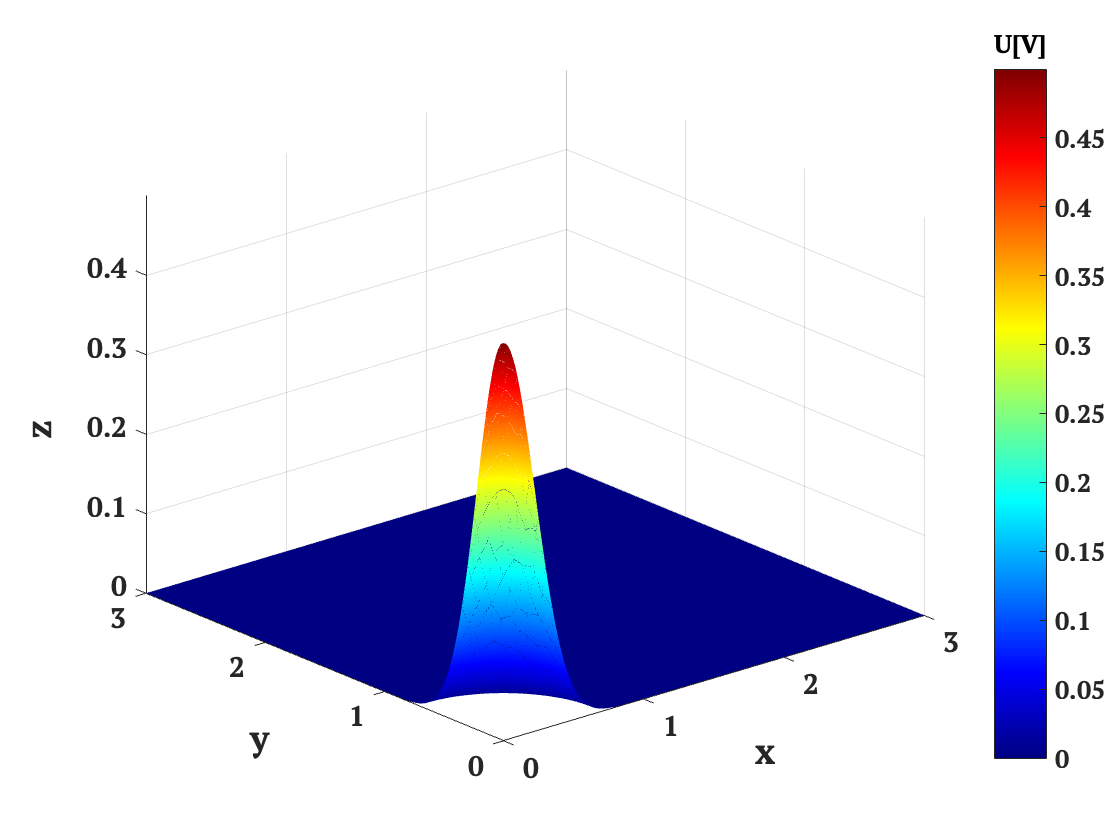}  \hfill 
\includegraphics[width=0.49\textwidth]{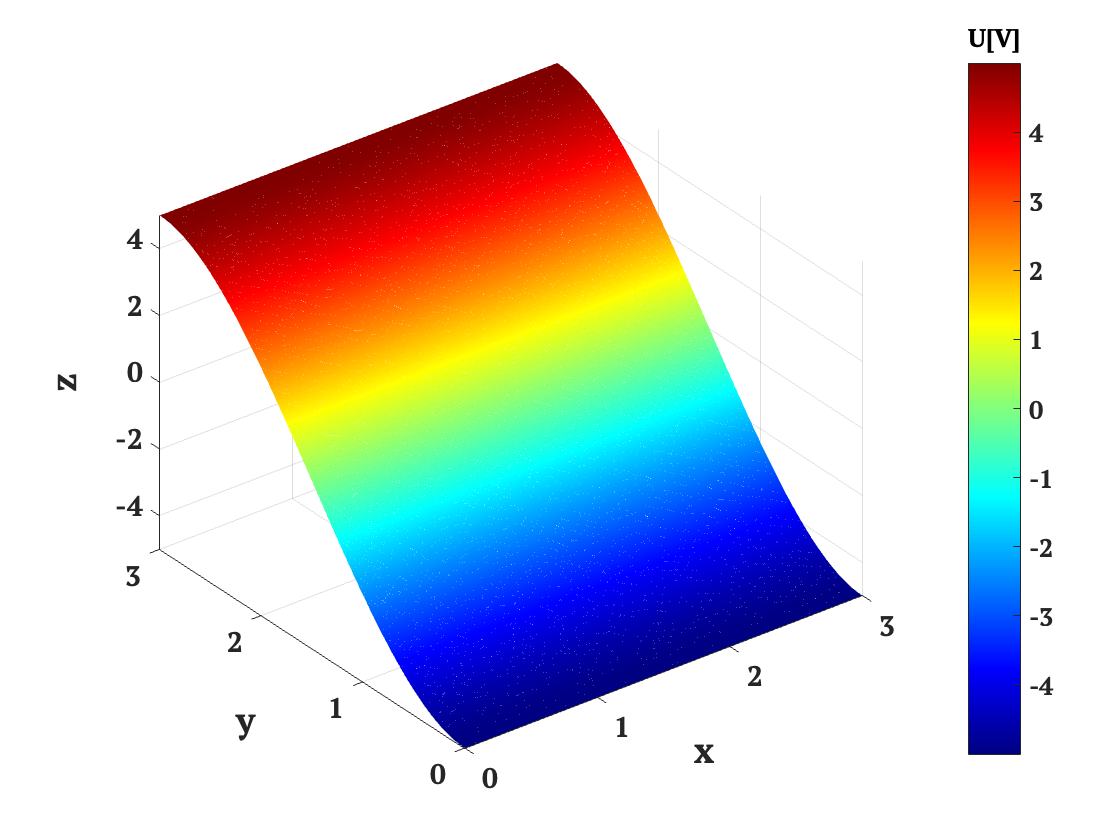} 
\caption{Profile of the approximated DG state and adjoint-state  on adaptive meshes with   $\sigma_0=100$.}\label{appmeshuex}  \vspace{1cm}
\end{center}
\begin{center}
\includegraphics[width=0.49\textwidth]{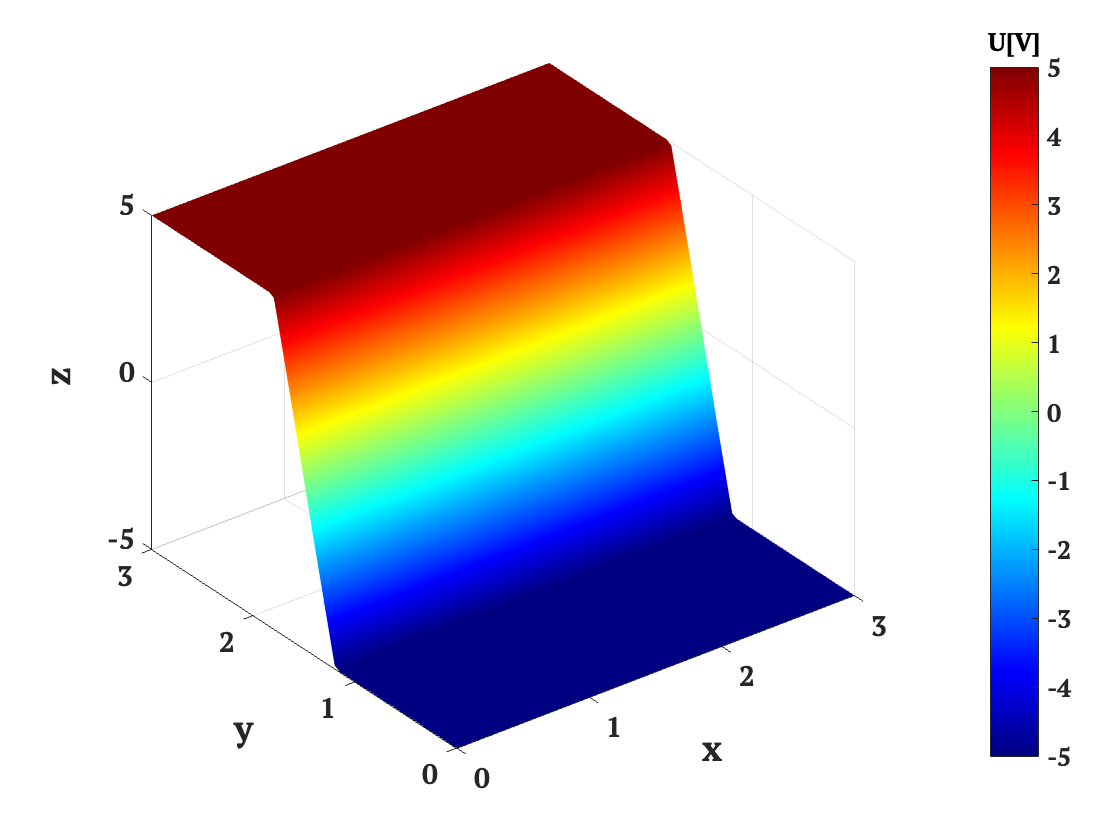}   \hfill
 \includegraphics[width=0.49\textwidth]{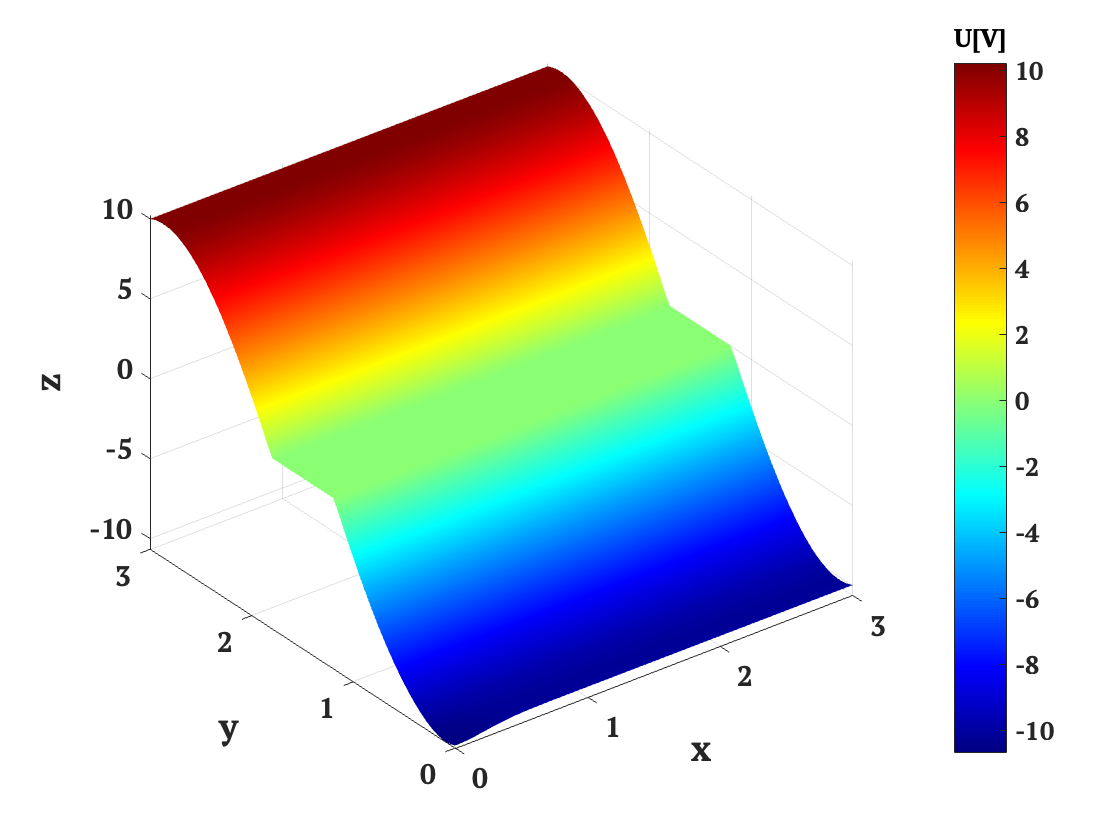}
\caption{Profile of approximated DG control and and co-control on adaptive meshes.}\label{controlcocontrol} 
\end{center}
\end{figure}
\begin{figure}[H]
\begin{center}
\includegraphics[width=0.45\textwidth]{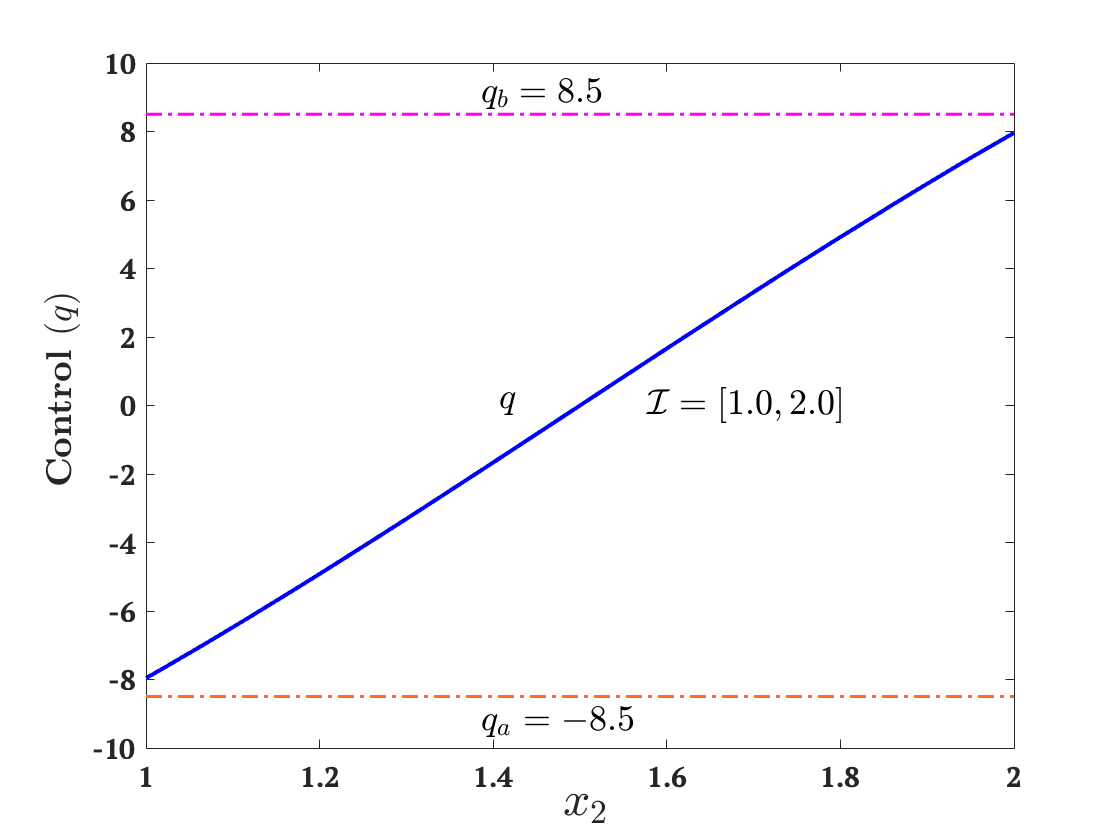}
\hfill
\includegraphics[width=0.45\textwidth]{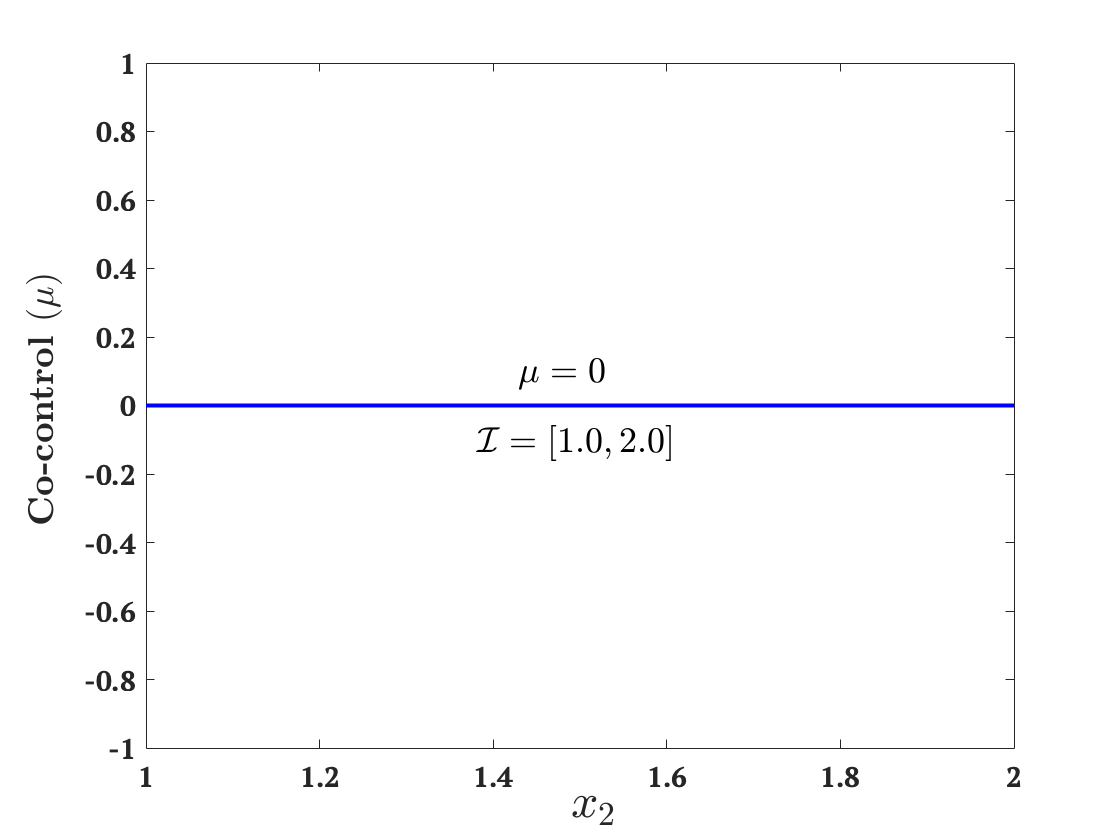}
\caption{Computation of control (left) and co-control  (right) on the inactive set $\mathcal{I}=\{0\}\times [1, 2]$.}\label{inactiveset}  \vspace{1cm}
\includegraphics[width=0.45\textwidth]{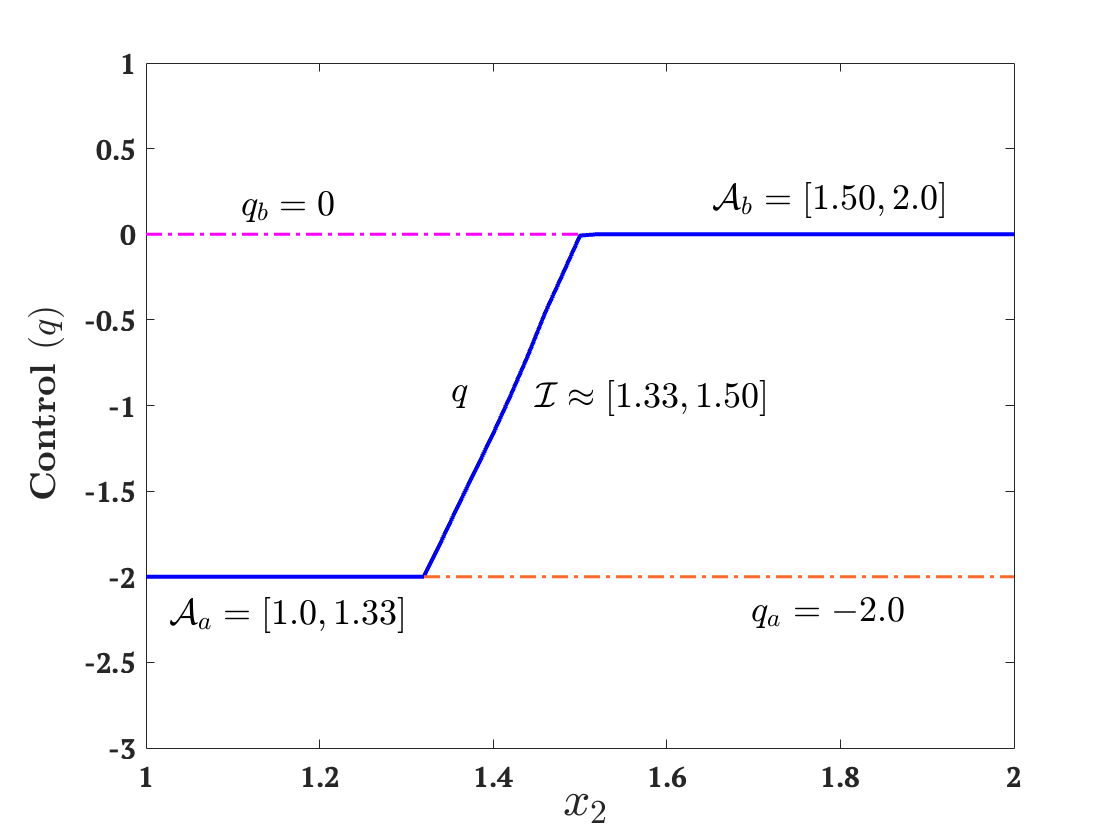}\hfill 
\includegraphics[width=0.45\textwidth]{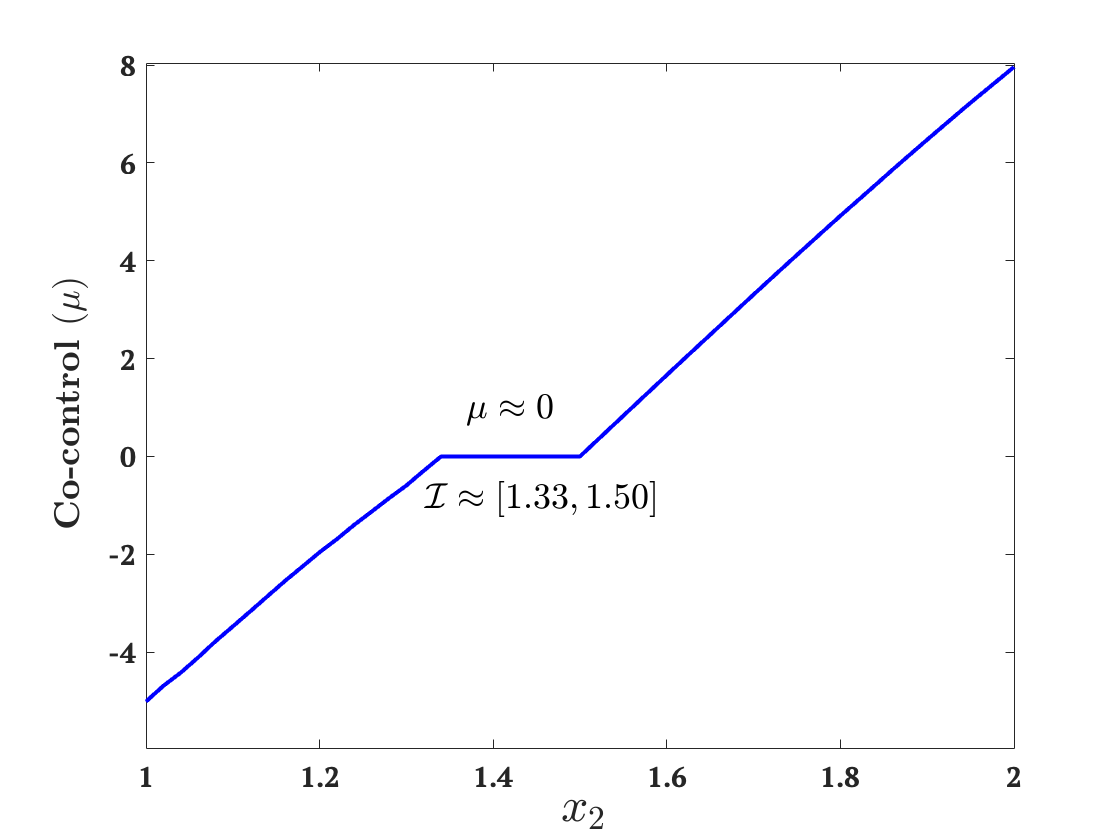}
\caption{Computation of control  (left) and co-control (right) on the active-inactive sets $\mathcal{A}=\{0\}\times (\mathcal{A}_a \cup \mathcal{A}_b)$ and $\mathcal{I}=\{0\}\times [1.33, 1.50]$, respectively.}\label{mixedqmu20} \vspace{1cm}
\includegraphics[width=0.45\textwidth]{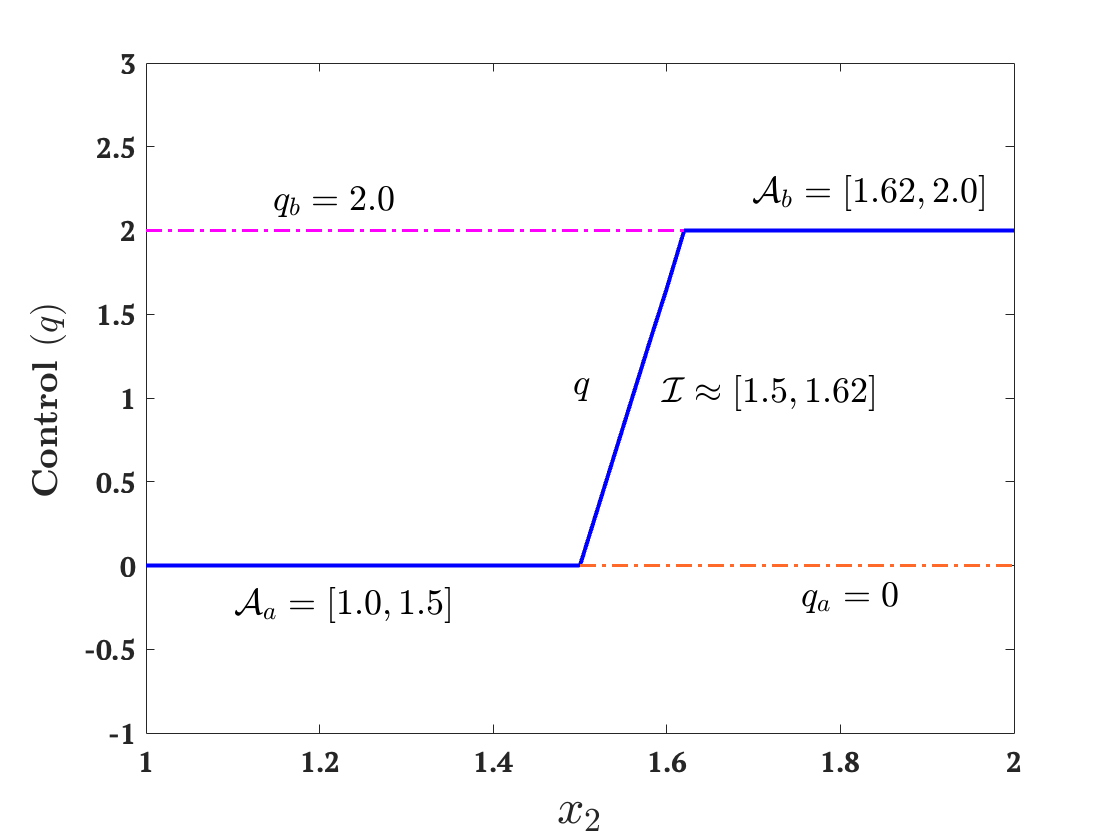} \hfill
\includegraphics[width=0.45\textwidth]{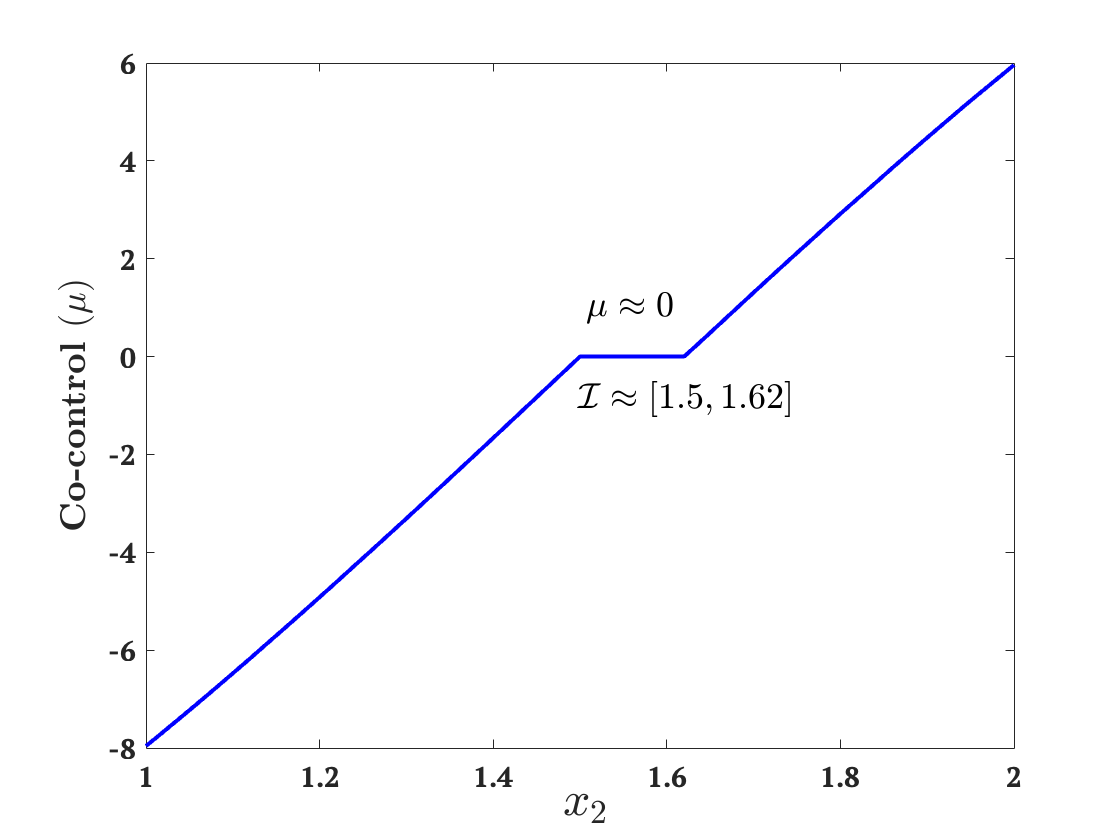}
\caption{Computation of control (left) and co-control (right) on the active-inactive sets $\mathcal{A}=\{0\}\times (\mathcal{A}_a \cup \mathcal{A}_b)$ and $\mathcal{I}=\{0\}\times [1.50, 1.62]$, respectively.} \label{qpp02mup02} 
\end{center}
\end{figure}
\begin{figure}[H]
\begin{center}
\includegraphics[width=0.45\textwidth]{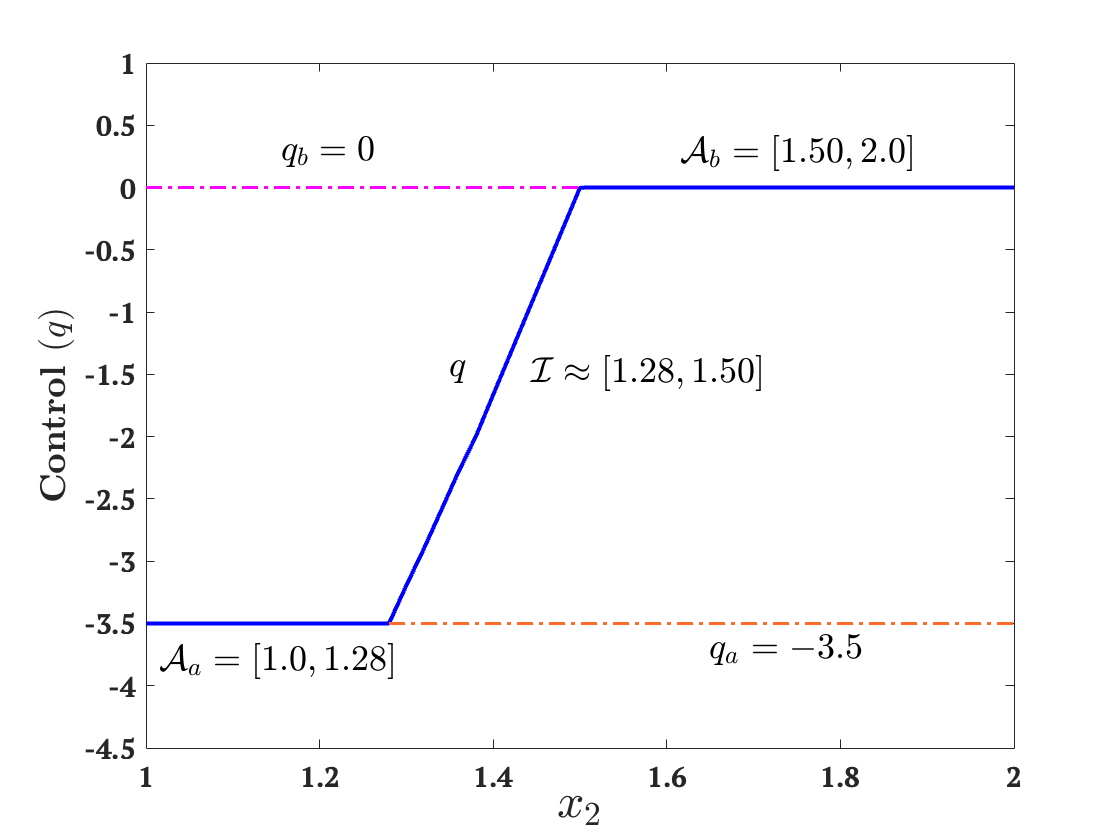}
\hfill
\includegraphics[width=0.45\textwidth]{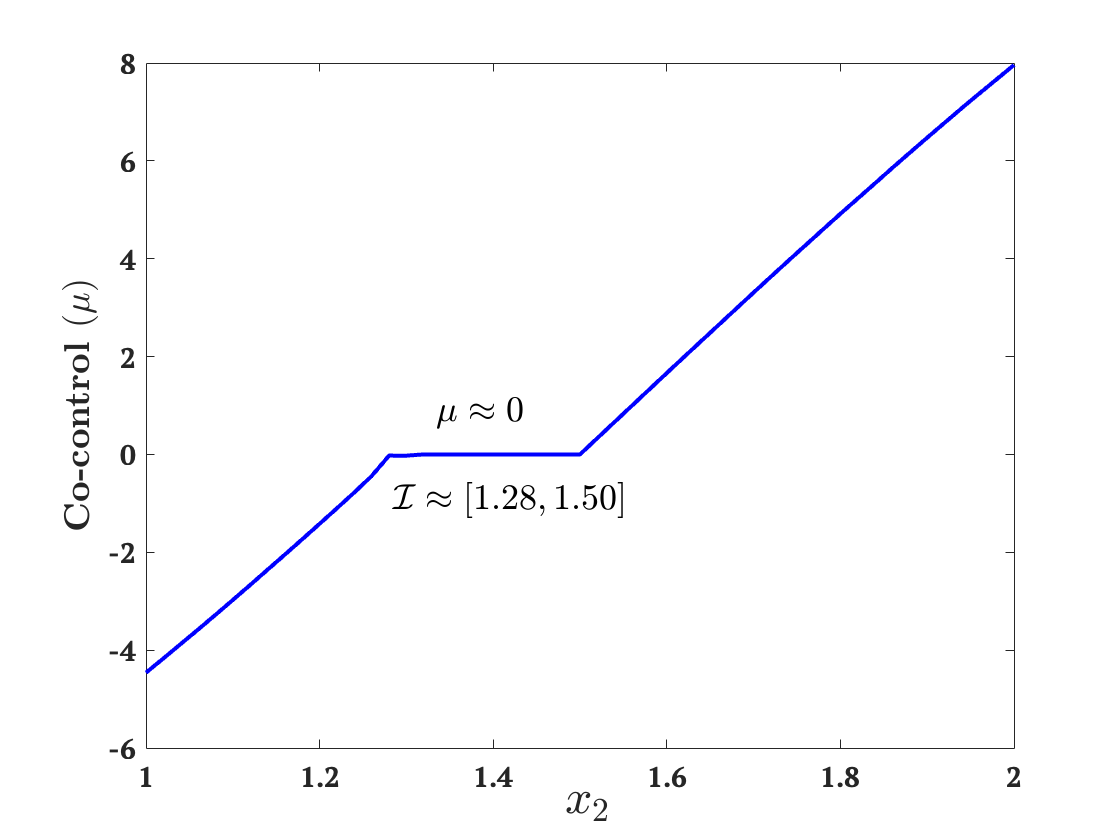}
\caption{Computation of control $(q)$ (left) and co-control $(\mu)$ (right) on the active-inactive sets $\mathcal{A}=\{0\}\times (\mathcal{A}_a \cup \mathcal{A}_b)$ and  $\mathcal{I}=\{0\}\times [1.28, 1.50]$.}\label{inactive3.50set} \vspace{1cm}
\includegraphics[width=0.45\textwidth]{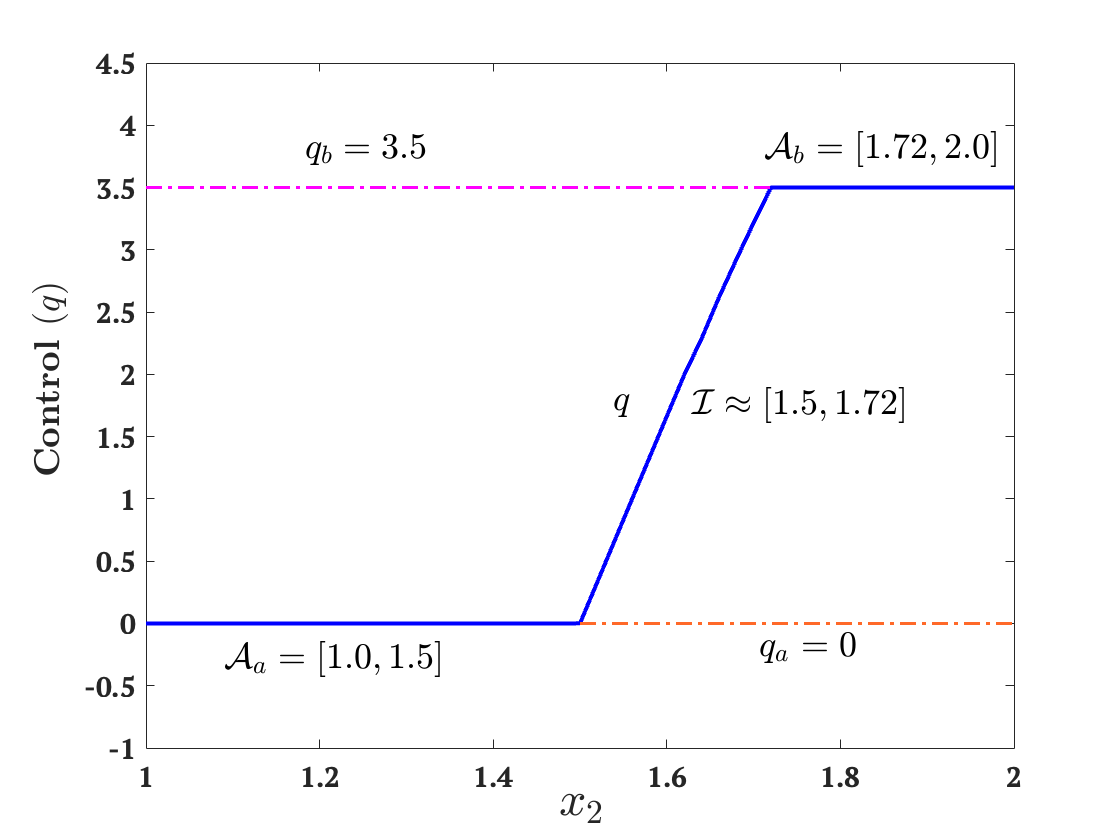}\hfill 
\includegraphics[width=0.45\textwidth]{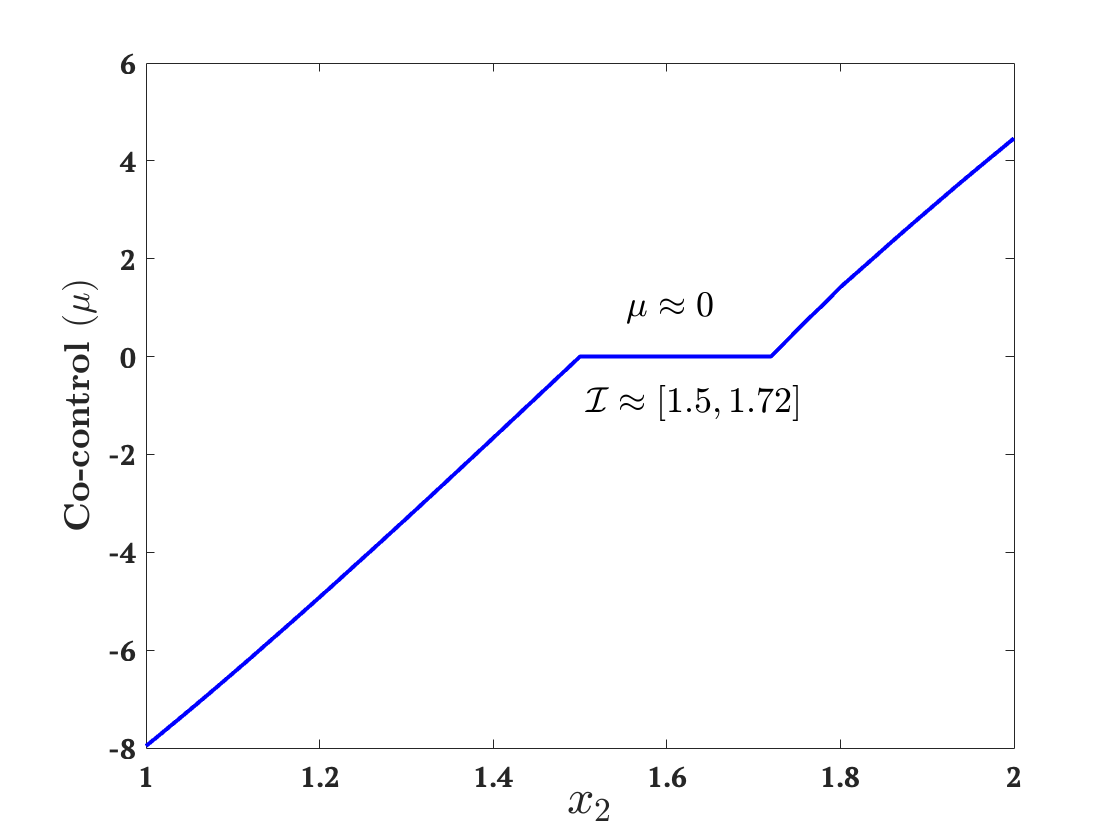}
\caption{Computation of control (left) and co-control  (right) on the active-inactive sets $\mathcal{A}=\{0\}\times (\mathcal{A}_a \cup \mathcal{A}_b)$ and $\mathcal{I}=\{0\}\times [1.50, 1.72]$, respectively.}\label{mixedqmu} \vspace{1cm}
\includegraphics[width=0.45\textwidth]{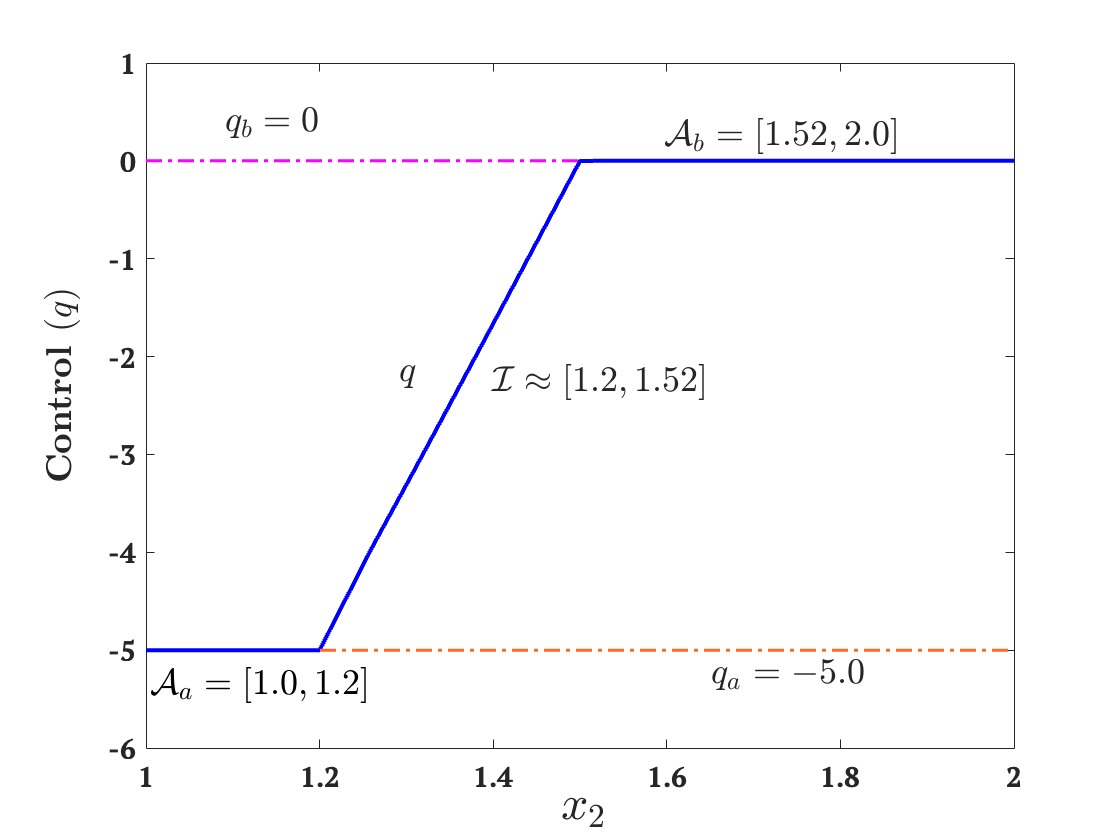} \hfill
\includegraphics[width=0.45\textwidth]{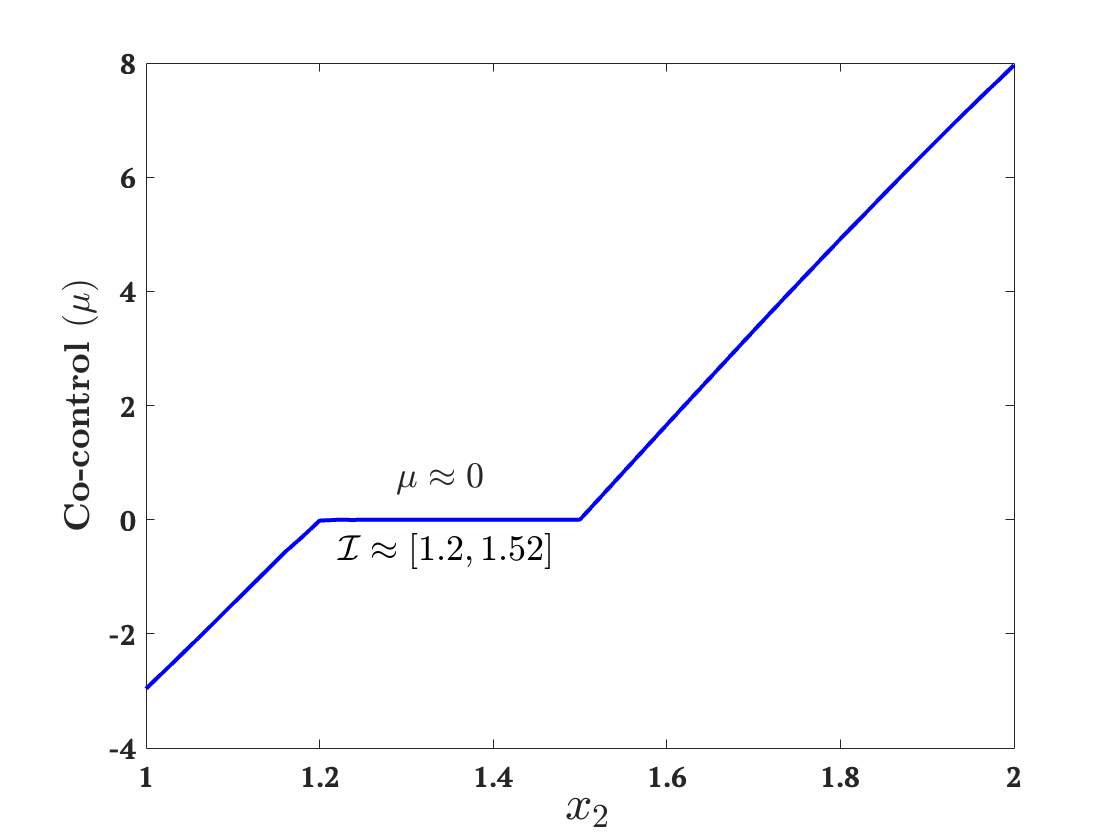}
\caption{Computation of control (left) and co-control (right) on the active-inactive sets $\mathcal{A}=\{0\}\times (\mathcal{A}_a \cup \mathcal{A}_b)$ and $\mathcal{I}=\{0\}\times [1.2, 1.52]$, respectively.} \label{qpp02mup02} 
\end{center}
\end{figure}
\begin{figure}[H]
\begin{center}
\includegraphics[width=0.45\textwidth]{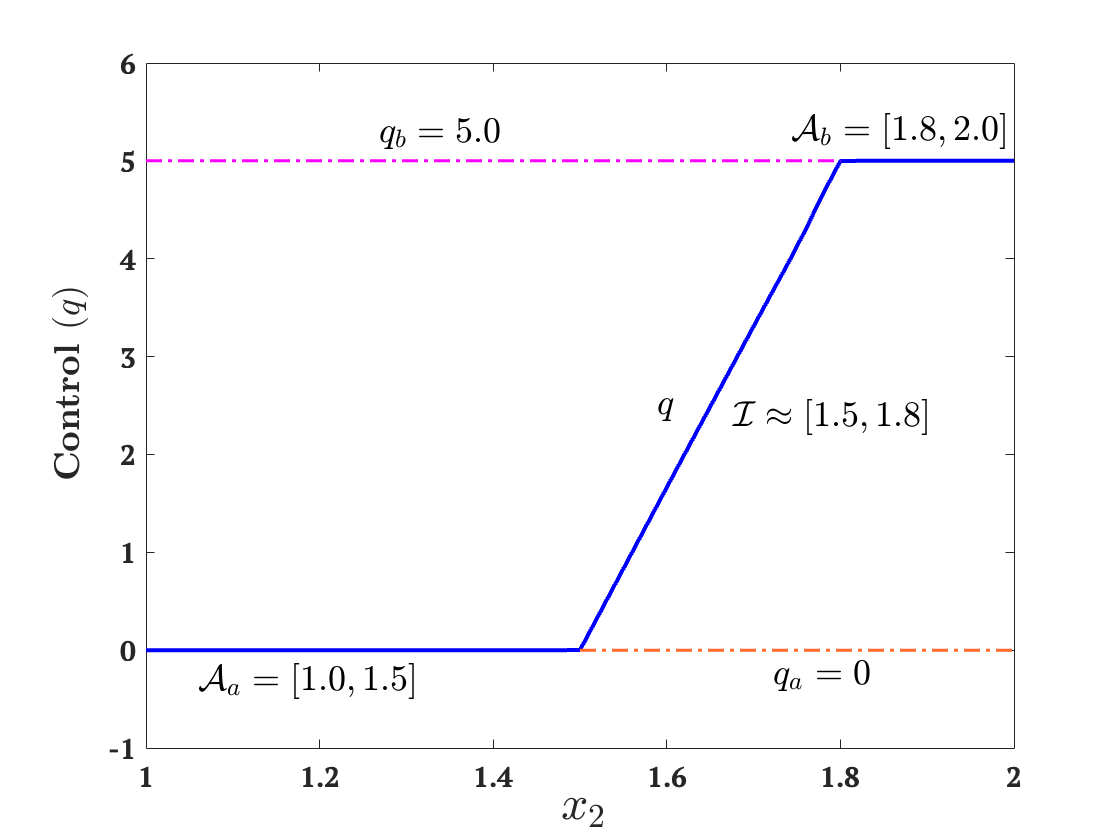} \hfill
\includegraphics[width=0.45\textwidth]{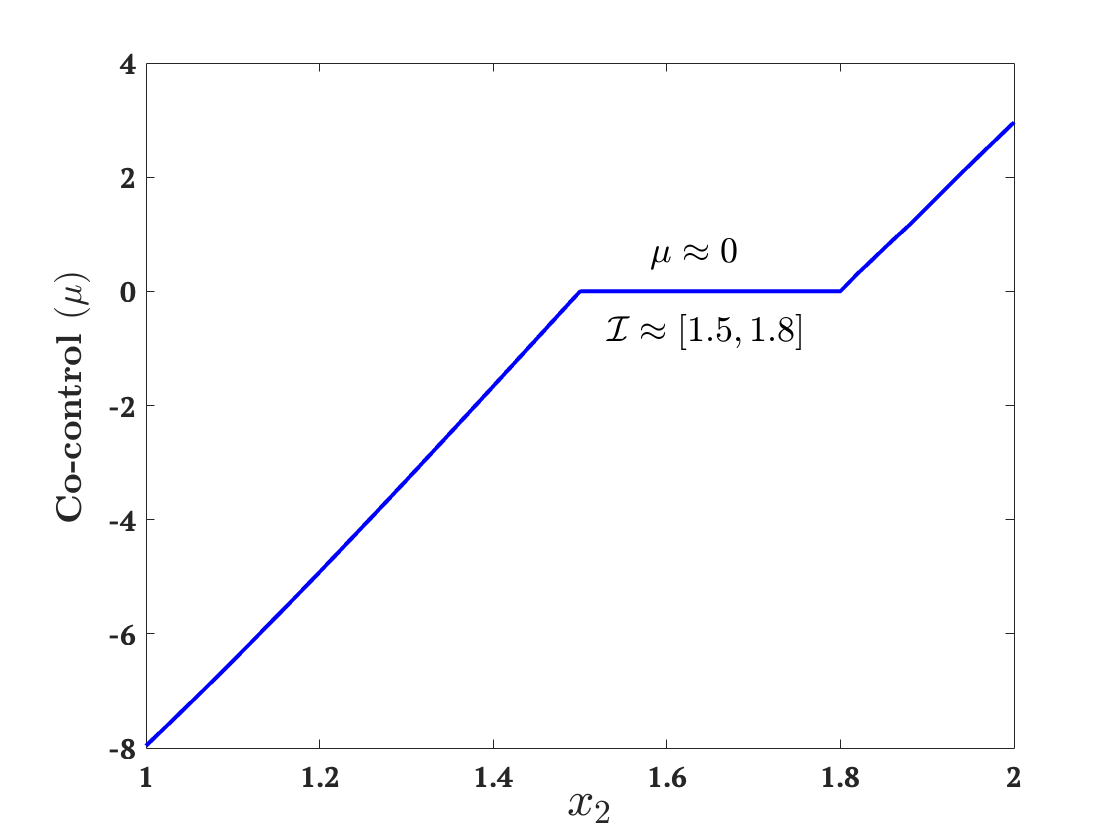}
\caption{Computation of control  (left) and co-control (right) on the active-inactive sets $\mathcal{A}=\{0\}\times (\mathcal{A}_a \cup \mathcal{A}_b)$ and $\mathcal{I}=\{0\}\times [1.5, 1.8]$, respectively.} \label{qpp05mup05} \vspace{1cm}
\includegraphics[width=0.45\textwidth]{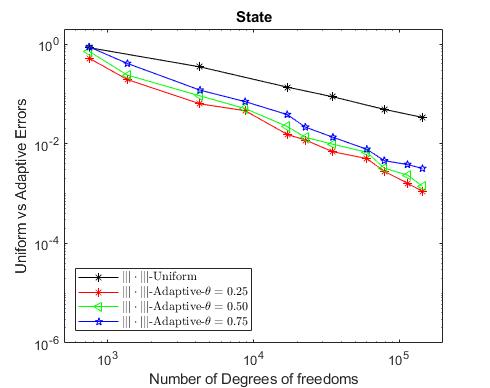}
\hfill
\includegraphics[width=0.45\textwidth]{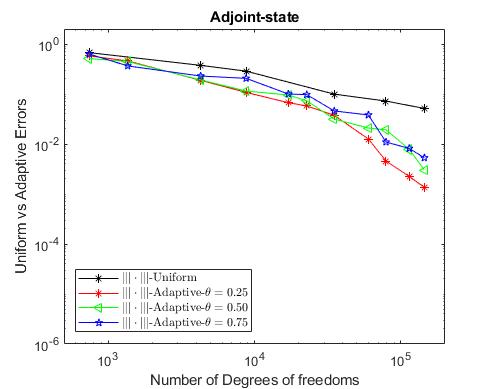}
\caption{The state (left) and adjoint-state (right) error plots on the uniform and adaptive  meshes.}\label{stateadjoint-stateerrors} \vspace{1cm}
\includegraphics[width=0.45\textwidth]{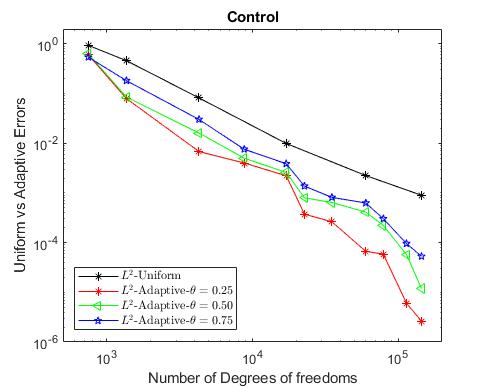}\hfill 
\includegraphics[width=0.45\textwidth]{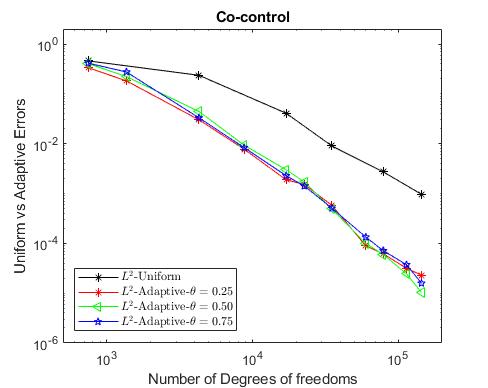}
\caption{The control (left) and co-control (right) error plots on the uniform and adaptive  mesh.}\label{controlcocontrolerrors} 
\end{center}
\end{figure}
\begin{figure}[H]
\begin{center}
\includegraphics[width=0.43\textwidth]{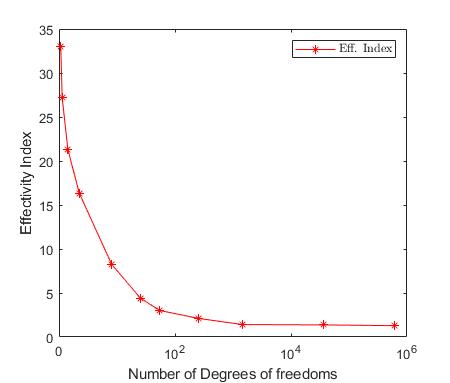} \hfill
\includegraphics[width=0.43\textwidth]{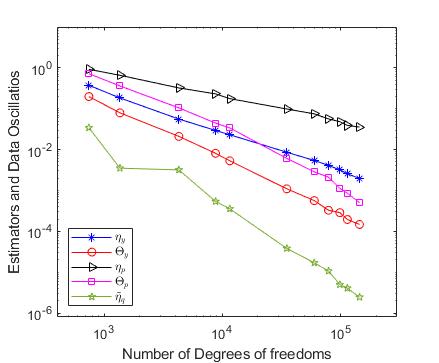}
\end{center}
\caption{Plots of the effective index (left) and the components of the error estimator and data oscillations (left).}\label{iffind} 
\end{figure}
\noindent 
parameter $\sigma_0=10^2$, $q_a=-5.0$, $q_b=5.0$ and time $t=0.5$, while the Figure \ref{controlcocontrol} shows the approximated control (left) and the co-control (right).

Further, the active and inactive sets are analyzed for the different choices for $q_a$ and $q_b$, as illustrated in Figures \ref{inactiveset}--\ref{qpp05mup05}. First, we selected $q_a=-8.5$ and $q_b=8.5$, and we saw that the controls are inactive in the set $\mathcal{I}=[1, 2]$ for this case (cf., Figures \ref{inactiveset} (left)) and the co-control $\mu$ is vanishing according to the definition, which is efficiently captured by the indicators (cf., Figures \ref{inactiveset} (right)). Any choice of $q_a$ and $q_b$ between $-8.0$ and $8.0$, the controls are active in some regions and inactive in some of the regions, as presented in Figures \ref{mixedqmu20}-\ref{qpp05mup05} and the corresponding co-controls thereof. We observe that the co-controls disappear in the portion in which the control is inactive $\mathcal{I}$; refer to right Figures \ref{mixedqmu20}-\ref{qpp05mup05}. Further, the active sets are denoted by $\mathcal{A}$, which are clearly mentioned in the figures.

Moreover, Figure \ref{stateadjoint-stateerrors} illustrates state (left) and adjoint-state (right) errors in uniform and adaptive meshes by setting tolerances $\epsilon=0.00001$ and
various marking parameters $\theta=0.25$, $\theta=0.50$, and $\theta=0.75$ in the bulk strategy. Similarly, Figure \ref{controlcocontrolerrors} displays the control and co-control errors in uniform and adaptive meshes using $\theta=0.25, 0.50$ and $0.75$, respectively. The effective index (left) and the components of the error estimator and the data oscillations plots (left) with the marking parameter $\theta=0.45$ are presented in Figure \ref{iffind}.
\begin{exam}
Consider the space domain $\Omega=[-1,1]\times[-1, 1]$ with boundary $\Gamma= \bar{\Gamma}_1 \cup \bar{\Gamma}_2\cup\bar{\Gamma}_3 \cup \bar{\Gamma}_4$, where $\Gamma_1=[-1,1]\times\{-1\},\; \Gamma_2=\{1\}\times [-1,1],\;\Gamma_3=[-1,1]\times\{1\},\; \Gamma_4=\{-1\}\times [-1,1]$. In addition, we consider the time domain $I=[0, 1]$, however the datum is considered as follows:
\begin{align*}
&z(x,t)=y(x,t)=0.1\times \big(1-e^{-1000\times(t-0.5)^2}\big)\times e^{[(x_1-t+0.5)^2+(x_2-t+0.5)^2]/0.04}\\
&q(x,t)=P_[q_a,q_b]\big(z(x,t)\big), \quad \mu(x,t)=z(x,t)-P_[q_a,q_b]\big(z(x,t)\big).
\end{align*}
Set $\Gamma_N=\Gamma_1\cup \Gamma_3$ and $\Gamma_D=\Gamma_2 \cup \Gamma_4$, respectively.  For simplicity, the functions $q_d(x,t)$, $g_{_D}(x,t)$, $g_{_N}(x,t)$ and $r_{_N}(x,t)$, respectively, are considered to be zero, while  $q_a=-0.25$  and $q_b=0.25$.
\end{exam}
We calculate the terms $f(x,t)$ and $y_d(x,t)$ using the state equation \eqref{contstate} and the adjoint-state equation \eqref{2.14adjoint-state}, respectively. Initially, we considered the mesh size $\tt N_x\times N_y=60\times 60$. From Figure \ref{appsolexp2state}, we observe that the time-step size drops in an interval around $t=0.5$ and remains constant away from this interval due to the term $\big(1-e^{-10000\times(t-0.5)^2}\big)$.
\begin{figure}[htbp]
\centering
\begin{minipage}[t]{0.5\textwidth} 
\noindent
Note that the term $\big(1-e^{-10000\times(t-0.5)^2}\big)$ changes exponentially from one to zero and  zero to one in the neighbourhood of $t=0.5$.  It is observed that the mesh  reflected via the indicators in the neighbourhood of singularity at the various time steps.
Furthermore, the mesh moves at a constant speed away from the neighbourhood of $t=0.5$ but the shape of the solution remains unchanged (cf., Figurers \ref{appsolexp2state}). Figures \ref{meshuniadptex2}--\ref{admeshex2} show that the mesh adapted very well by the derived error estimators. A comparison of uniform mesh (Figure \ref{unifmeshex2}) and adaptive meshes (Figures  \ref{meshuniadptex2}--\ref{admeshex2}) highlights the effectivity of the derived estimators.
\end{minipage}
\hfill
\begin{minipage}[t]{0.39\textwidth} \vfill
\centering
$\underset{\tt (a)~ \#E=8560,~ \#N=4401}{\includegraphics[width=\linewidth]{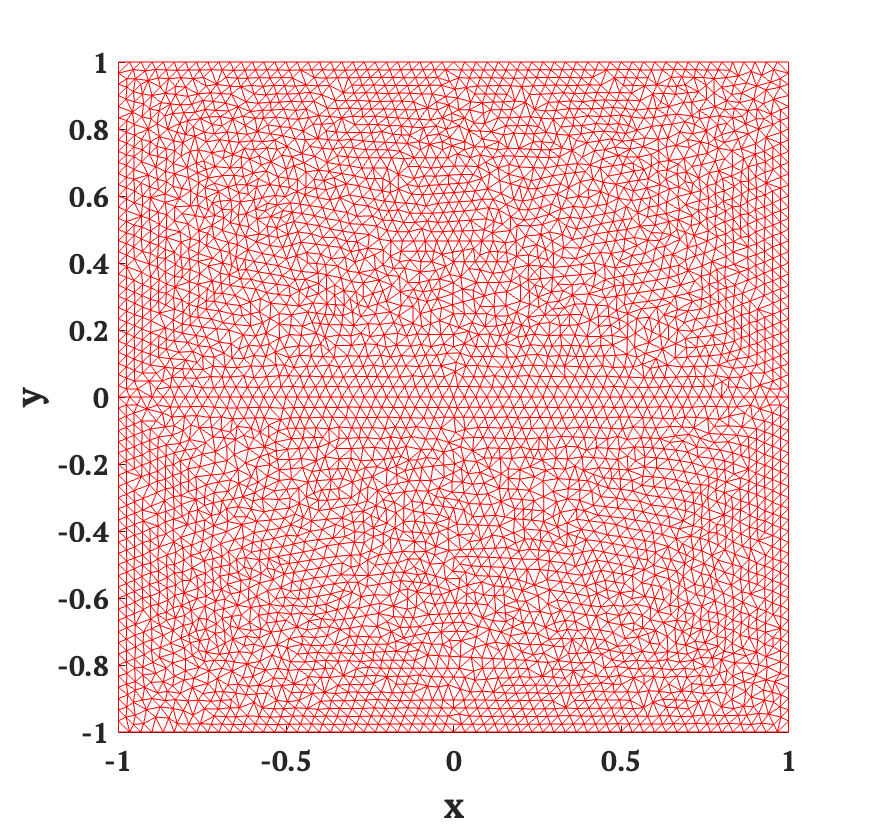}}$ 
\caption{Uniform mesh with mesh size $\tt N_x\times N_y=60\times 60$.}\label{unifmeshex2} 
\end{minipage}
\end{figure}
\begin{figure}[H]
\begin{center}
$\underset{\tt (b)~  Time~step~t=0.1}{\includegraphics[width=0.33\textwidth]{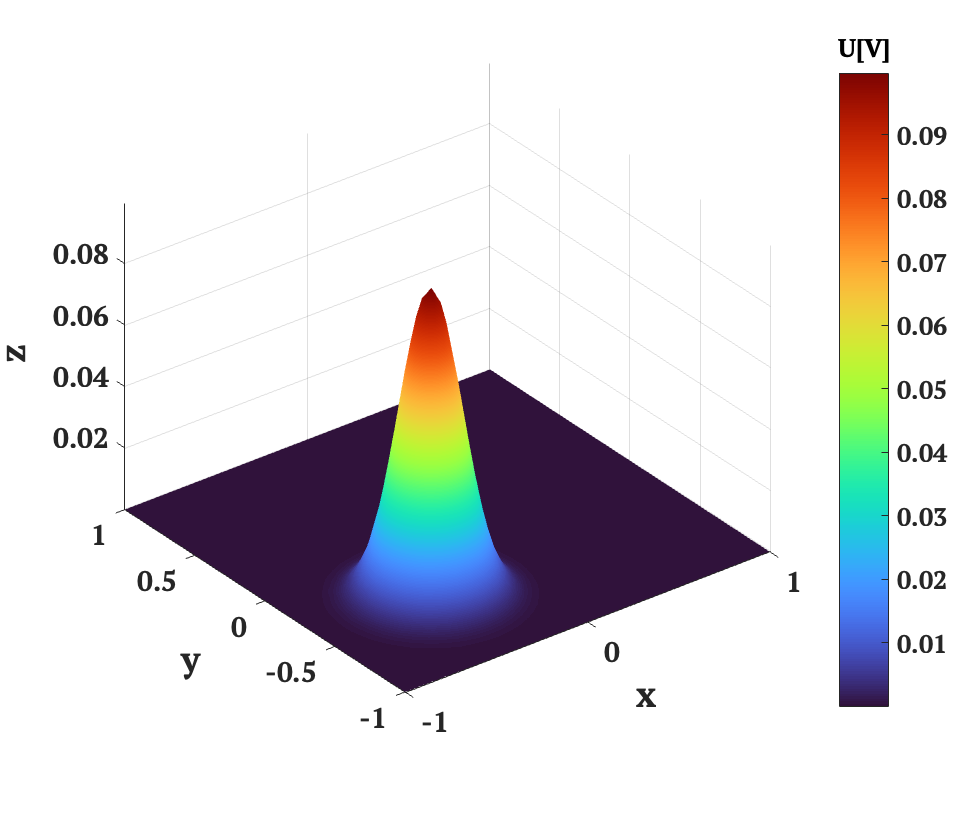}}$ \hfill 
$\underset{\tt (c)~  Time~step~t=0.48}{\includegraphics[width=0.325\textwidth]{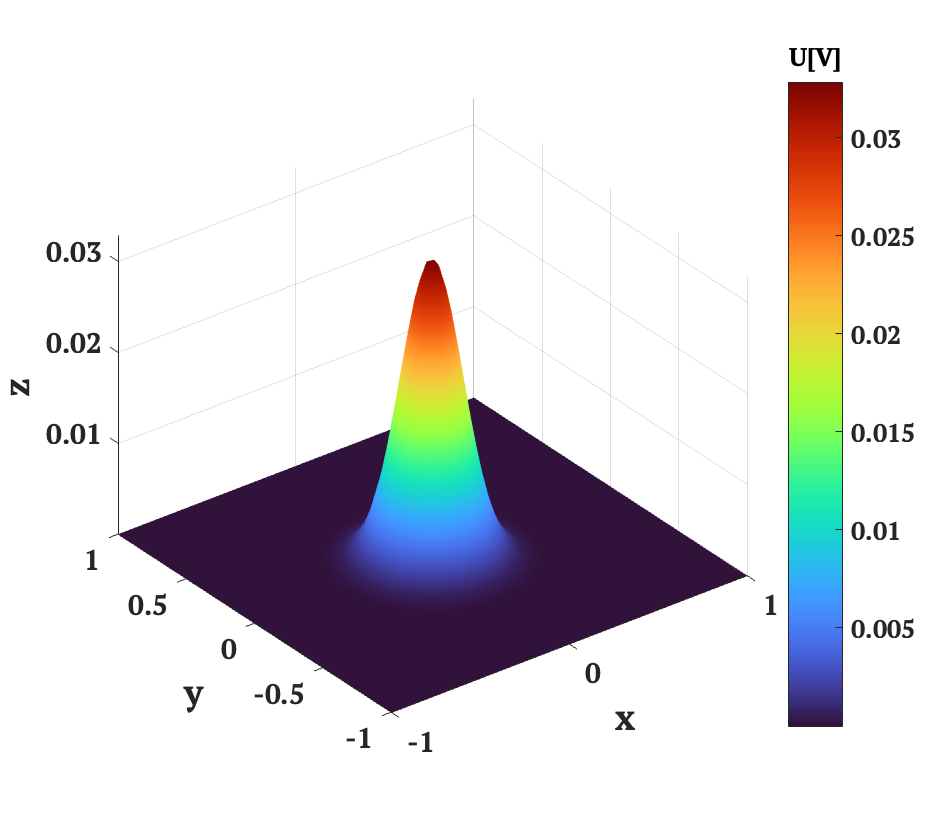}}$ \hfill 
$\underset{\tt (d)~  Time~step~t=1.0}{\includegraphics[width=0.325\textwidth]{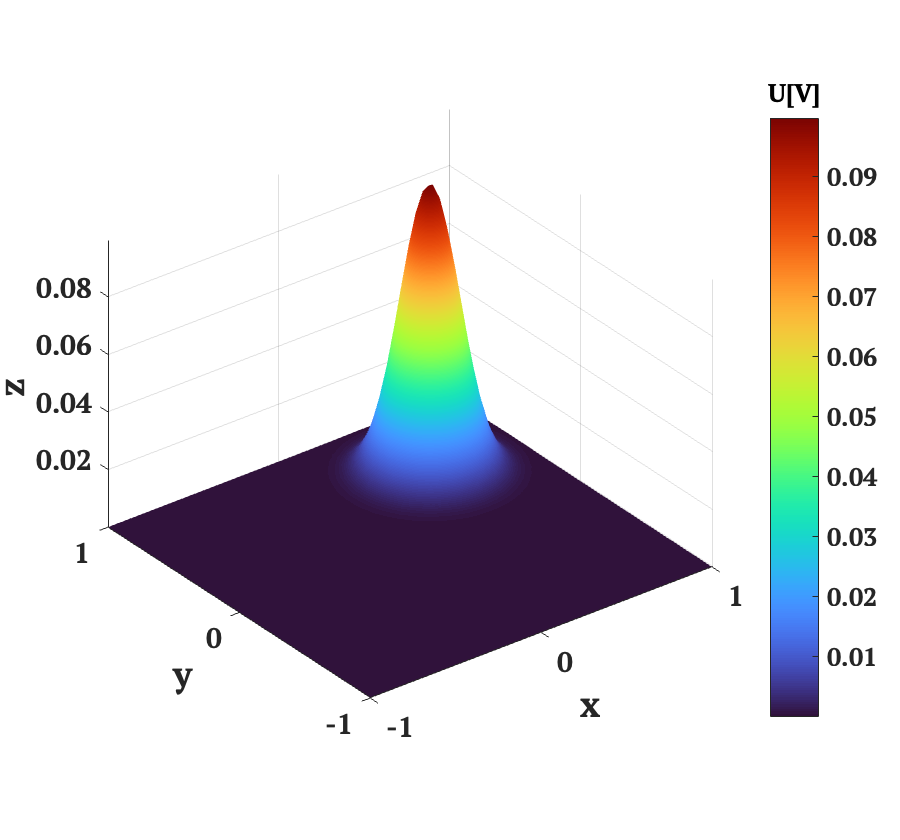}}$ 
\caption{Profile of the approximate state solutions with $\sigma_0=10^3$ at time steps $t=0.1$, $t=0.48$ and $t=1.0$, respectively.}\label{appsolexp2state} \vspace{0.1cm}
\end{center}
\begin{center}
$\underset{\tt (e)~  \#E=2606,~ \#N=1315}{\includegraphics[width=0.325\textwidth]{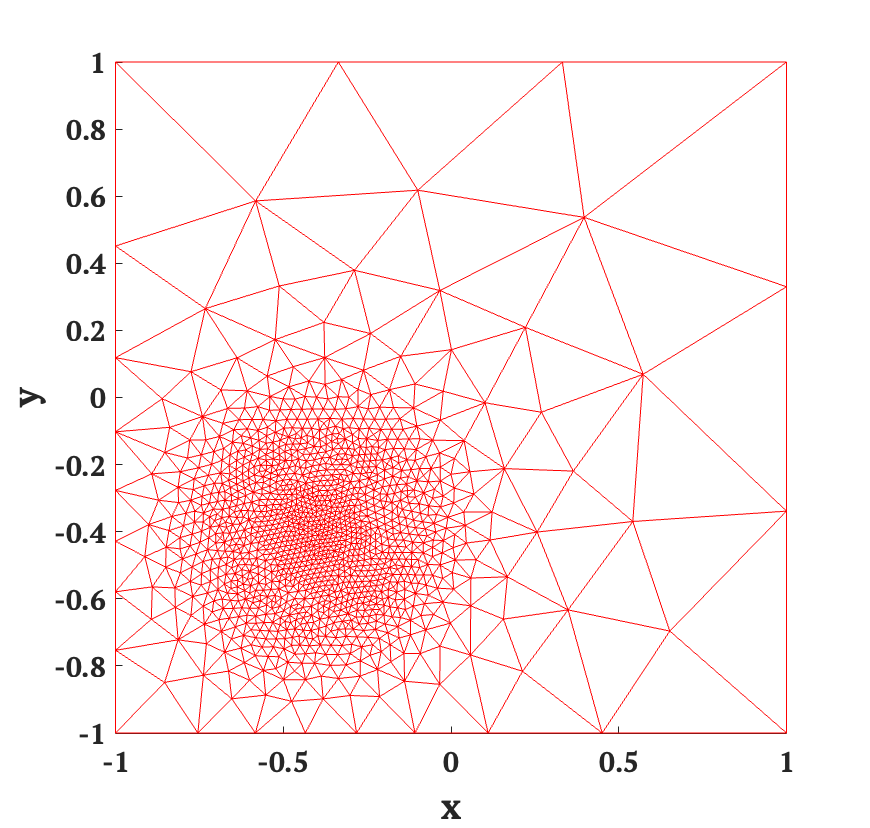}}$ \hfill 
$\underset{\tt (f)~ \#E=5465,~ \#N=2747}{\includegraphics[width=0.325\textwidth]{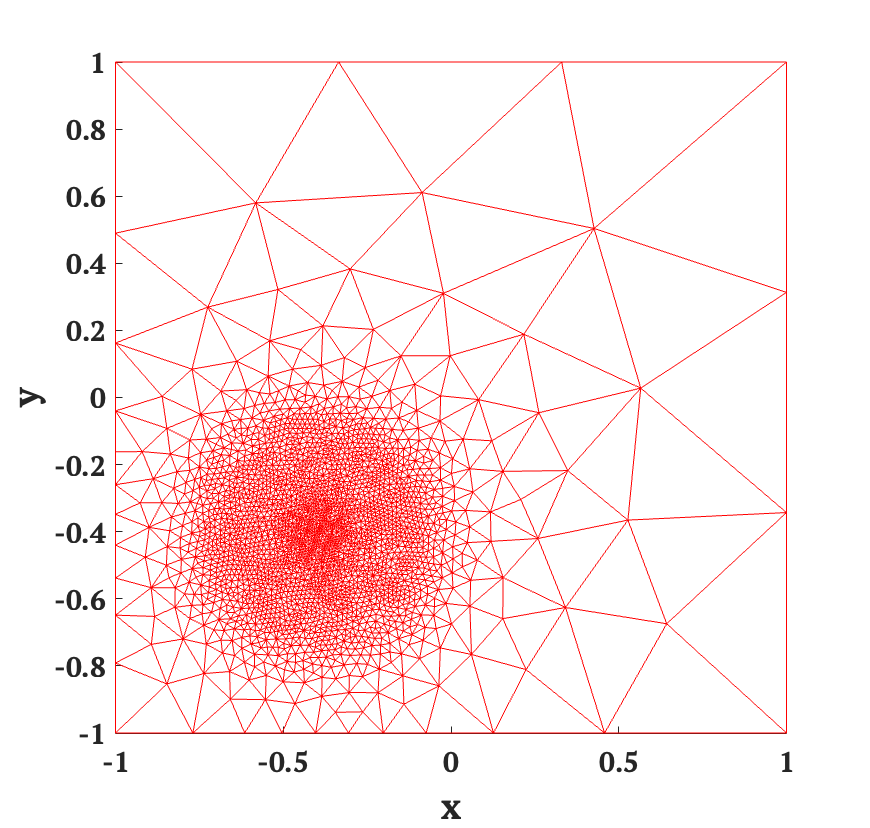}}$ \hfill 
$\underset{\tt (g)~  \#E=11538,~ \#N=5786}{\includegraphics[width=0.325\textwidth]{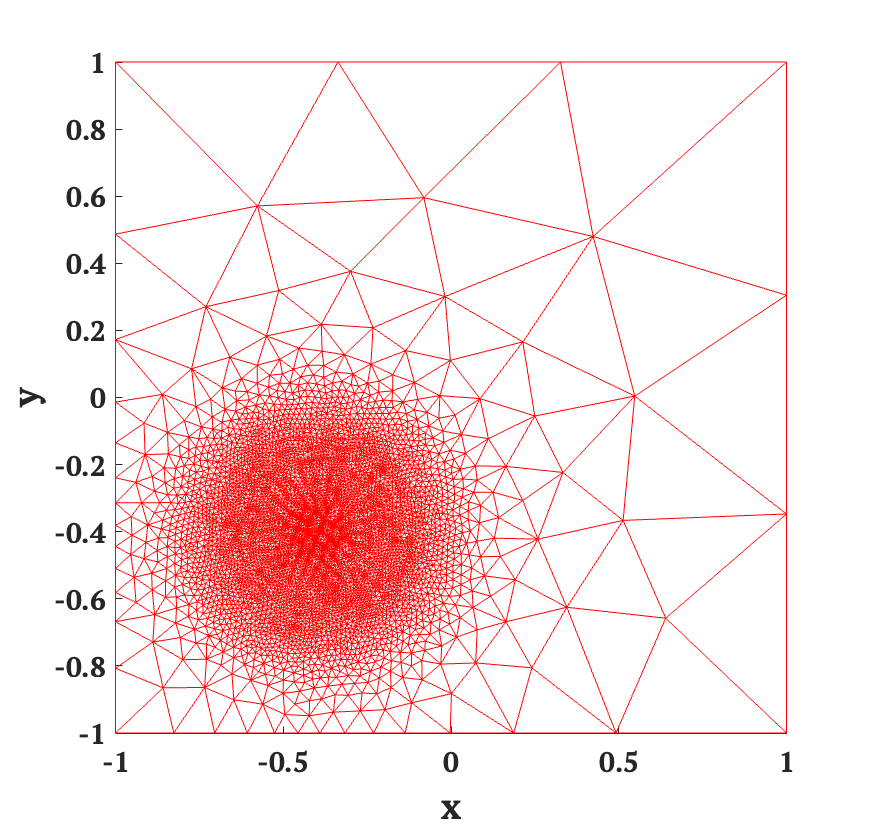}}$ 
\caption{Adaptive meshes after $1^{st}$, $2^{nd}$, and  $3^{rd}$-iterations with number of elements ($\tt \#E$) and nodes ($\tt \#N$), respectively,  at time $t=0.1$.}\label{meshuniadptex2} \vspace{0.1cm}
\end{center}
\end{figure}
\begin{figure}[H]
\begin{center}
$\underset{\tt (h)~  \#E=2660,~ \#N=1339}{\includegraphics[width=0.325\textwidth]{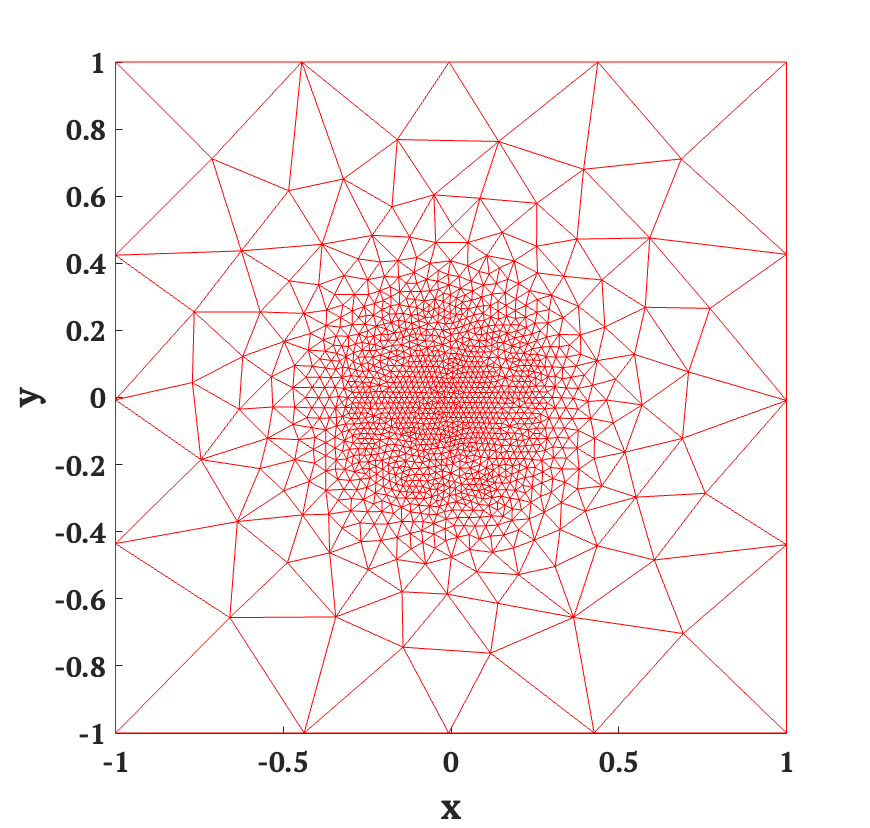}}$  \hfill 
$\underset{\tt (i)~ \#E=5530,~ \#N=2774}{\includegraphics[width=0.325\textwidth]{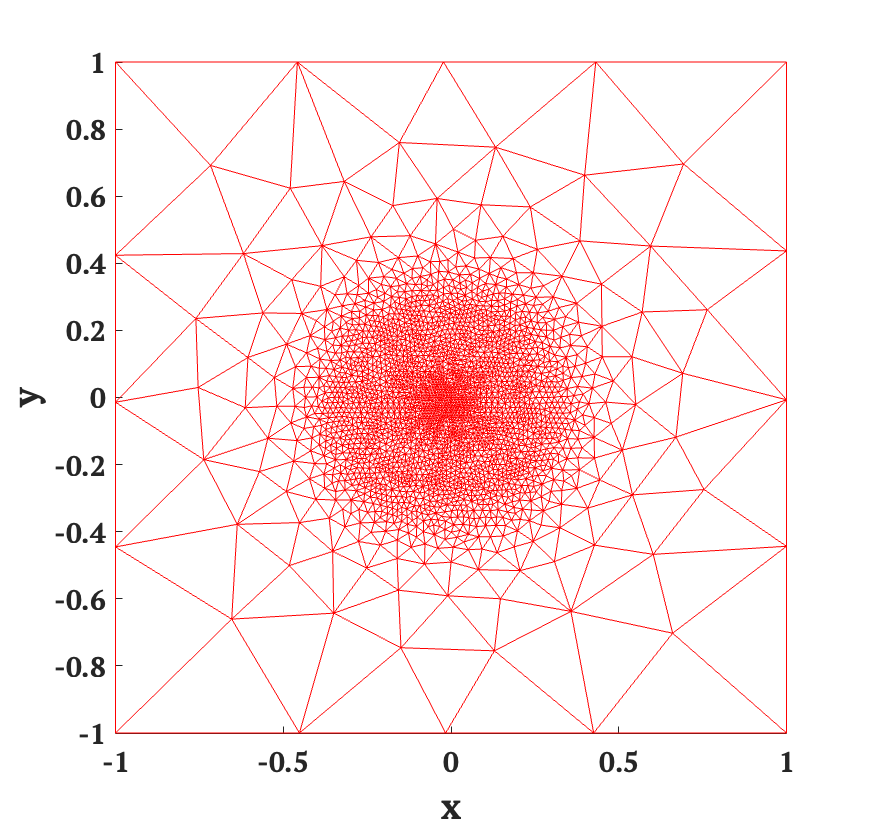}}$  \hfill 
$\underset{\tt (j)~ \#E=11720,~ \#N=5870}{\includegraphics[width=0.325\textwidth]{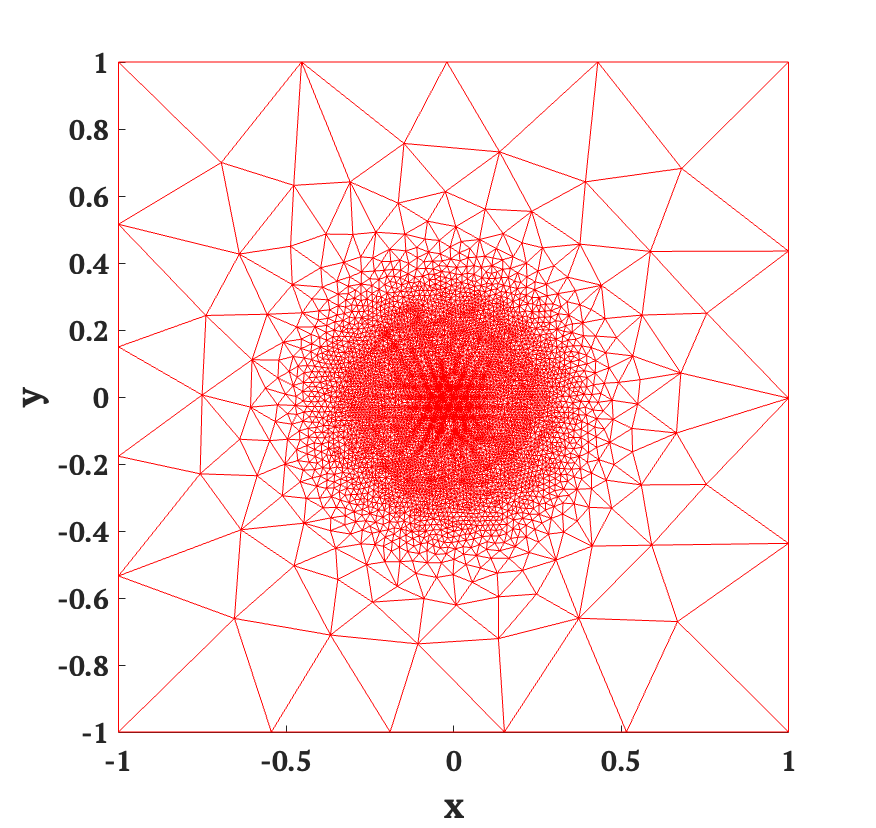} }$
\caption{Adaptive meshes after $1^{st}$, $2^{nd}$, and  $3^{rd}$-iterations with number of elements ($\tt \#E$) and nodes ($\tt \#N$), respectively,  at time $t=0.48$.}\label{appmeshuex2}  \vspace{0.48cm}
\end{center}
\begin{center}
$\underset{\tt (k)~  \#E=2548,~ \#N=1289}{\includegraphics[width=0.325\textwidth]{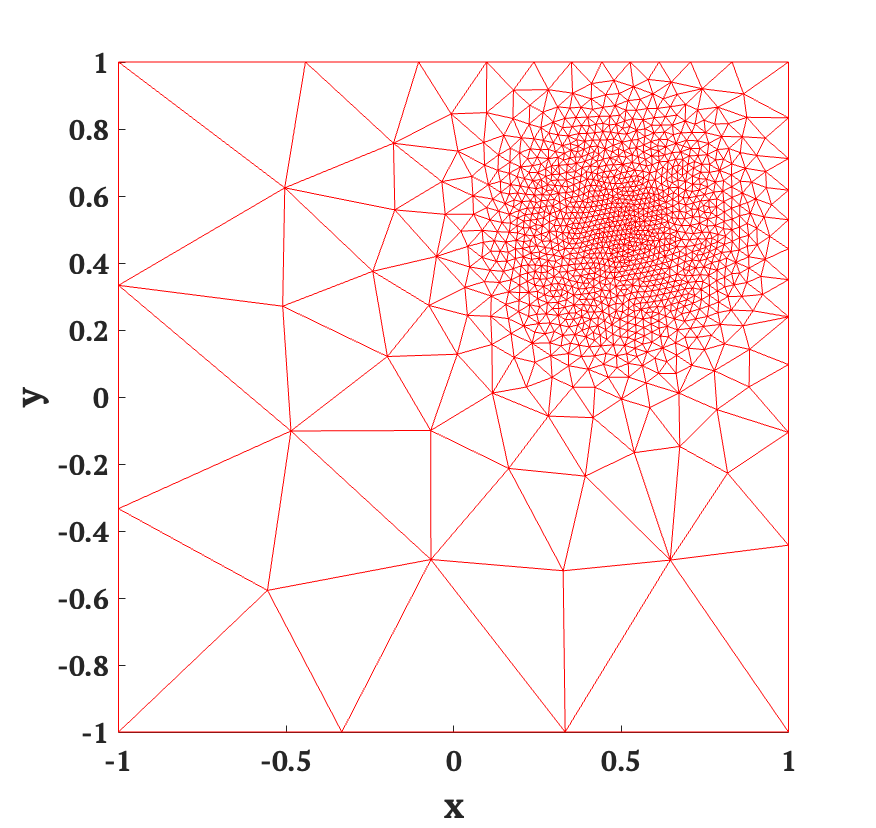} }$  \hfill
$\underset{\tt (l)~  \#E=5561,~ \#N=2801}{\includegraphics[width=0.325\textwidth]{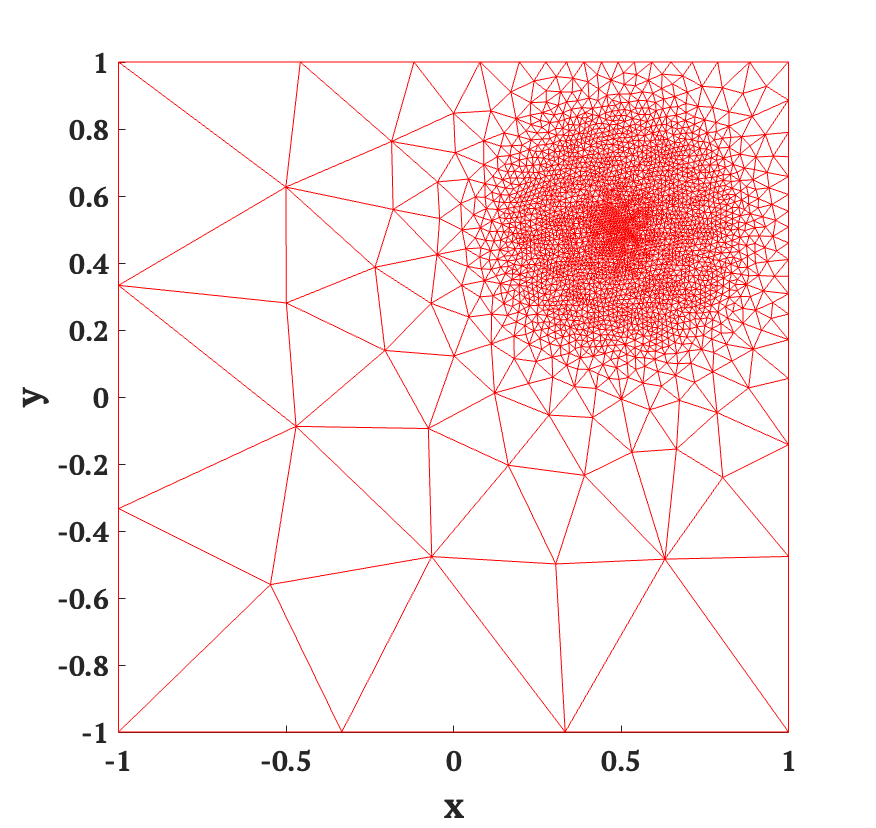}}$   \hfill
$\underset{\tt (m)~  \#E=11747,~ \#N=5898}{ \includegraphics[width=0.325\textwidth]{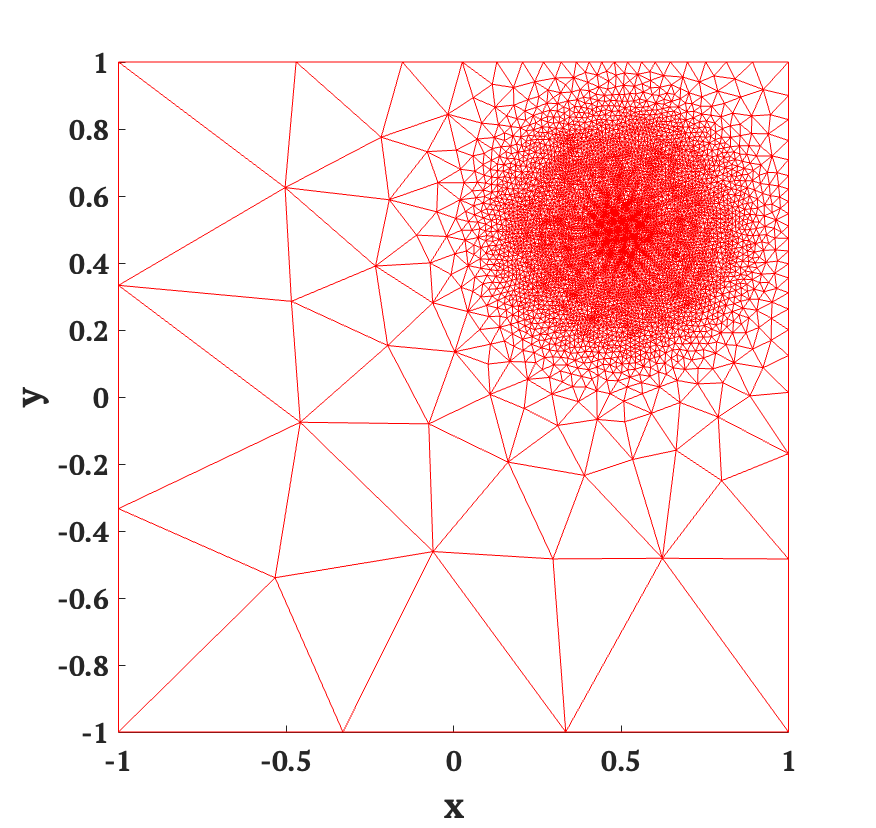}}$
\caption{Adaptive meshes after $1^{st}$, $2^{nd}$, and  $3^{rd}$-iterations with number of elements ($\tt \#E$) and nodes ($\tt \#N$), respectively,  at time step $t=1.0$.}\label{admeshex2} 
\end{center}
\end{figure}
\section{Conclusion} \label{section6666}
This study investigated the a posteriori error analysis for the SIPG method for parabolic BCPs with bilateral control constraints. We have employed piecewise-linear polynomials for the discretization of the state, adjoint-state, and control variables. Both lower and upper bounds for the error estimates are established showcasing the efficacy and dependability of the proposed error estimator through consideration of data oscillations. While the control error estimator $\eta_q$ proved effective in capturing the approximation error of the control, it was found to have limitations in providing guidance for refinement localization in certain critical instances. To address these deficiencies, we employ an alternative control indicator, $\bar{\eta}_q$, in numerical calculations. The results of these computations unequivocally highlighted the superiority of adaptive refinements over uniform meshes, underscoring the effectiveness of the proposed approach in achieving accurate solutions while optimizing computational efficiency. First example shows that the control estimator captured the active-inactive set accurately, while the second example depicted the dynamic behaviour of a time-dependent singularity. However, the adaptive meshes are displaying its singularity movement  as it evolves over time.  Our numerical findings emphasize the superiority of adaptive refinements over uniform meshes.
\bibliographystyle{plain}
\bibliography{RMRKBVR19022025.bib}

\begin{thebibliography}{10}

\bibitem{adams1975}
R.~A. Adams and J.~J.~F. Fournier.
\newblock {\em Sobolev Spaces}.
\newblock Pure and Applied Mathematics, Second ed., Elsevier, Amsterdam, 2003.

\bibitem{aniswarth2007}
M.~Ainsworth.
\newblock A posteriori error estimation for discontinuous {G}alerkin finite element approximation.
\newblock {\em SIAM J. Numer. Anal.}, 45:1777--1798, 2007.

\bibitem{arnoldbrezzi2002}
D.~N. Arnold, F.~Brezzi, B.~Cockburn, and L.~D. Marini.
\newblock Unified analysis of discontinuous {G}alerkin methods for elliptic problems.
\newblock {\em SIAM J. Numer. Anal.}, 39:1749--1779, 2002.

\bibitem{rheinboldt1978}
I.~Babuska and W.~C. Rheinboldt.
\newblock Error estimates for adaptive finite element computations.
\newblock {\em SIAM J. Numer. Anal.}, 15:736–754, 1978.

\bibitem{baker1995}
G.~A. Baker, W.~N. Jureidini, and O.~A. Karakashian.
\newblock Piecewise solenoidal vector fields and the stokes problem.
\newblock {\em SIAM J. Numer. Anal.}, 27:1749--1779, 1990.

\bibitem{becker2000}
R.~Becker, H.~Kapp, and R.~Rannacher.
\newblock Adaptive finite element methods for optimal control of partial differential equations: Basic concept.
\newblock {\em SIAM J. Control Optim.}, 39:113--132, 2000.

\bibitem{benedix2009}
O.~Benedix and B.~Vexler.
\newblock A posteriori error estimation and adaptivity for elliptic optimal control problems with state constraints.
\newblock {\em Comput. Optim. Appl.}, 44:3--25, 2009.

\bibitem{brenner2002}
S.~C. Brenner and L.~R. Scott.
\newblock {\em The Mathematical Theory of Finite Element Methods}.
\newblock 2nd ed., Springer, Berlin, 2002.

\bibitem{chapman2014}
A.~Cangiani, J.~Chapman, E.~H. Georgoulis, and M.~Jensen.
\newblock On local super-penalization of interior penalty discontinuous {G}alerkin methods.
\newblock {\em Int. J. Numer. Anal. Model.}, 11:478--495, 2014.

\bibitem{casasdhamo2012}
E.~Casas and V.~Dhamo.
\newblock Error estimates for the numerical approximation of {N}eumann control problems governed by a class of quasilinear elliptic equations.
\newblock {\em Comput. Optim. App.}, 52:719--756, 2012.

\bibitem{fernando2001}
H.~J.~S. Fernando, S.~M. Lee, and J.~Anderson.
\newblock Urbanfluid mechanics: Air circulation and contaminant dispersion in cities.
\newblock {\em Environ. Fluid Mech.}, 1:107--164, 2001.

\bibitem{gaevskaya2007}
A.~Gaevskaya, Y.~Iliash, M.~Kieweg, and R.~H.~W. Hoppe.
\newblock Convergence analysis of an adaptive finite element method for distributed control problems with control constraints.
\newblock {\em Control of Coupled Partial Differential Equations Birkh\"{a}user, Basel}, 155:47--68, 2007.

\bibitem{warburton2008}
J.~S. Hesthaven and T.~Warburton.
\newblock {\em Nodal Discontinuous {G}alerkin Methods: Analysis, Algorithms, and Applications}.
\newblock Springer, Berlin, 2008.

\bibitem{mhoppe2010}
M.~Hinterm\"{u}ller and R.~H.~W. Hoppe.
\newblock Goal-oriented adaptivity in pointwise state constrained optimal control of partial differential equations.
\newblock {\em SIAM J. Control Optim.}, 48:5468--5487, 2010.

\bibitem{hoppeiliash2008}
M.~Hinterm\"{u}ller, R.~H.~W. Hoppe, Y.~Iliash, and M.~Kieweg.
\newblock An a posteriori error analysis of adaptive finite element methods for distributed elliptic control problems with control constraints.
\newblock {\em ESAIM Control Optim. Calc. Var.}, 14:540--560, 2008.

\bibitem{itokunisch2002}
M.~Hinterm\"{u}ller, K.~Ito, and K.~Kunisch.
\newblock The primal-dual active set strategy as a semi-smooth newton method.
\newblock {\em SIAM J. Optim.}, 13:865--888, 2002.

\bibitem{hinzematthes2009}
M.~Hinze and M.~Matthes.
\newblock A note on variational discretization of elliptic {N}eumann boundary control.
\newblock {\em Control Cybern.}, 38:577--591, 2009.

\bibitem{zhou2009}
M.~Hinze, N.~Yan, and Z.~Zhou.
\newblock Variational discretization for optimal control governed by convection dominated diffusion equations.
\newblock {\em J. Comput. Math.}, 27:237--253, 2009.

\bibitem{hoppeiliash06}
R.~H.~W. Hoppe, Y.~Iliash, C.~Iyyunni, and N.~H. Sweilam.
\newblock A posteriori error estimates for adaptive finite element discretizations of boundary control problems.
\newblock {\em J. Numer. Math.}, 14:57--82, 2006.

\bibitem{kanschat2009}
R.~H.~W. Hoppe, G.~Kanschat, and T.~Warburton.
\newblock Convergence analysis of an adaptive interior penalty discontinuous {G}alerkin method.
\newblock {\em SIAM J. Numer. Anal.}, 47:534--550, 2009.

\bibitem{kanschat2007}
G.~Kanschat.
\newblock {\em Discontinuous Galerkin Methods for Viscous Incompressible Flow}.
\newblock Adv. Numer. Math., Teubner, Wiesbaden, Germany, 2007.

\bibitem{pascal2007}
O.~A. Karakashian and F.~Pascal.
\newblock Convergence of adaptive discontinuous {G}alerkin approximations of second-order elliptic problems.
\newblock {\em SIAM J. Numer. Anal.}, 45:641--665, 2007.

\bibitem{kohlssiebert2014}
K.~Kohls, A.~R\"{o}sch, and K.~G. Siebert.
\newblock A posteriori error analysis of optimal control problems with control constraints.
\newblock {\em SIAM J. Control. Optim.}, 52:1832--1861, 2014.

\bibitem{siebertrosch2014}
K.~Kohls, K.~G. Siebert, and A.~R\"{o}sch.
\newblock Convergence of adaptive finite elements for optimal control problems with control constraints.
\newblock {\em Trends in PDE Constrained Optimization, Springer, Cham, Switzerland}, pages 403--419, 2014.

\bibitem{leykekhman2012}
D.~Leykekhman and M.~Heinkenschloss.
\newblock Local error analysis of discontinuous {G}alerkin methods for advection-dominated elliptic linear-quadratic optimal control problems.
\newblock {\em SIAM J. Numer. Anal.}, 50:2012--2038, 2012.

\bibitem{lions1971}
J.~L. Lions.
\newblock {\em Optimal Control of Systems Governed by Partial Differential Equations}.
\newblock Springer-Verlag, Berlin, 1971.

\bibitem{LM72}
J.~L. Lions and E.~Magenes.
\newblock {\em Nonhomogeneous Boundary Value Problems and Applications}.
\newblock Vol. 1, Springer-Verlag, New York, 1972.

\bibitem{liliuyan2000comp}
W.~Liu and N.~Yan.
\newblock A posteriori error estimators for a class of variational inequalities.
\newblock {\em J. Sci. Comput.}, 35:361--393, 2000.

\bibitem{liuyan09}
W.~Liu and N.~Yan.
\newblock Local a posteriori error estimates for convex boundary control problems.
\newblock {\em SIAM J. Numer. Anal.}, 47:1886--1908, 2009.

\bibitem{liuyan2001}
W.~Liu and Y.~Yan.
\newblock A posteriori error estimates for convex boundary control problems.
\newblock {\em SIAM J. Numer. Anal.}, 39:73--99, 2001.

\bibitem{manohar2024error}
R.~Manohar.
\newblock Error analysis for finite element approximation of parabolic {N}eumann boundary control problems.
\newblock {\em Comput. Math. Appl.}, 158:102--117, 2024.

\bibitem{rmrk2021}
R.~Manohar and R.~K. Sinha.
\newblock A posteriori error estimates for parabolic optimal control problems with controls acting on lower dimensional manifolds.
\newblock {\em J. Sci. Comput.}, 89:1--39, 2021.

\bibitem{manohar2022local}
R.~Manohar and R.~K. Sinha.
\newblock Local a posteriori error analysis of finite element method for parabolic boundary control problems.
\newblock {\em J. Sci. Comput.}, 91:1--43, 2022.

\bibitem{pironneau2001}
B.~Mohammadi and O.~Pironneau.
\newblock {\em Applied Shape Optimization for Fluids}.
\newblock Oxford University Press, Oxford, 2001.

\bibitem{mnsiebert}
P.~Morin, R.~H. Nochetto, and K.~G. Siebert.
\newblock Data oscillation and convergence of adaptive {FEM}.
\newblock {\em SIAM J Numer Anal.}, 38:466--488, 2000.

\bibitem{skiba2006}
D.~Parra-Guevara and Y.~N. Skiba.
\newblock On optimal solution of an inverse air pollution problem: Theory and numerical approach.
\newblock {\em Math. Comput. Model.}, 43:766–778, 2006.

\bibitem{bevere2014}
B.~Rivi\`{e}re.
\newblock {\em Discontinuous Galerkin methods for solving elliptic and parabolic equations, Theory and implementation}.
\newblock Frontiers Appl. Math., SIAM, Philadelphia, 2008.

\bibitem{Pratibha2024}
P.~Shakya.
\newblock Error analysis for the finite element approximations of {D}irichlet parabolic boundary control problem with measure data.
\newblock {\em J. Math. Anal. Appl.}, 534:109--128, 2024.

\bibitem{shakya17}
P.~Shakya and R.~K. Sinha.
\newblock A priori and a posteriori error estimates of ${H}^1-${G}alerkin mixed finite element method for parabolic optimal control problems.
\newblock {\em Optim. Control Appl. Methods}, 38:1056--1070, 2017.

\bibitem{shakya19}
P.~Shakya and R.~K. Sinha.
\newblock A posteriori error analysis for finite element approximations of parabolic optimal control problems with measure data.
\newblock {\em Appl. Numer. Math.}, 136:23--45, 2019.

\bibitem{verfurth1996}
R.~Verfurth.
\newblock {\em A Review of A Posteriori Error Estimation and Adaptive Mesh–Refinement Techniques}.
\newblock Adv. Numer. Math., Wiley Teubner, Chicester, 1996.

\bibitem{yanzhou2009}
N.~Yan and Z.~Zhou.
\newblock A priori and a posteriori error analysis of edge stabilization {G}alerkin method for the optimal control problem governed by convection dominated diffusion equation.
\newblock {\em J. Comput. Appl. Math.}, 223:198--217, 2009.

\bibitem{arasozen2015}
H.~Y{\"u}cel and B.~Karas\"{o}zen.
\newblock Adaptive symmetric interior penalty {G}alerkin {(SIPG)} method for optimal control of convection diffusion equations with control constraints.
\newblock {\em Optimization}, 63:145--166, 2014.

\end{thebibliography}
\end{document}